\definecolor{lightblue}{rgb}{0.22,0.45,0.70}
\definecolor{darkred}{rgb}{0.82,0.15,0.20}
\definecolor{darkblue}{rgb}{0.82,0.15,0.12}
\numberwithin{equation}{section}
\numberwithin{figure}{section}
\numberwithin{table}{section}
\newcommand\cero{\boldsymbol{0}}
\newcommand\bH{\boldsymbol{H}}
\newcommand\bL{\boldsymbol{L}}
\newcommand\bI{\boldsymbol{I}}
\newcommand\bV{\boldsymbol{V}}
\newcommand\bPi{\boldsymbol{\Pi}}
\newcommand\beps{\boldsymbol{\varepsilon}}
\newcommand\ff{\boldsymbol{f}}
\newcommand\bb{\boldsymbol{b}}
\newcommand\nn{\boldsymbol{n}}
\newcommand\bsigma{\boldsymbol{\sigma}}
\newcommand\bu{\boldsymbol{u}}
\newcommand\bv{\boldsymbol{v}}
\newcommand\bx{\boldsymbol{x}}
\newcommand\RR{\mathbb{R}}
\newcommand\cT{\mathcal{T}}
\newcommand\bdiv{\mathop{\mathbf{div}}\nolimits}
\newcommand\vdiv{\mathop{\mathrm{div}}\nolimits}
\newcommand\bt{\boldsymbol{t}}
\newtheorem{remark}{Remark}[section]
\newtheorem{lemma}{Lemma}[section]
\newtheorem{theorem}{Theorem}[section]
\newtheorem{corollary}{Corollary}
\newenvironment{proof}{\noindent{\it Proof.}}{\hfill$\square$}
\begin{document}
\title{\Large {\bf Virtual element methods for the three-field formulation 
		of time-dependent linear poroelasticity}}

	\author{\normalsize Raimund B\"urger\thanks{
	CI$^{\,2}\!$MA and Departamento de Ingenier\'\i a Matem\' atica, 
		Universidad de Concepci\' on, Casilla 160-C, Concepci\' on, Chile. 
		Email: {\tt rburger@ing-mat.udec.cl}.},\quad 
	Sarvesh Kumar\thanks{Department of Mathematics,
		Indian Institute of Space
		Science and Technology, Trivandrum 695 547, India. 
		Email: {\tt sarvesh@iist.ac.in}.},\quad 
	David Mora\thanks{GIMNAP, Departamento de Matem\'atica,
		Universidad del B\'io-B\'io, Concepci\'on, Chile; and
		CI$^{\,2}\!$MA Universidad de Concepci\' on, Chile.
		Email: {\tt dmora@ubiobio.cl}.},\quad 
	Ricardo Ruiz-Baier\thanks{Mathematical Institute,
		University of Oxford,
		Woodstock Road, Oxford OX2 6GG, UK; and 
		Laboratory of Mathematical Modelling, 
		Institute for Personalised Medicine, Sechenov University, Moscow, Russian Federation. 
		Email: {\tt ruizbaier@maths.ox.ac.uk}.},\quad
	 Nitesh Verma\thanks{Department of Mathematics,
		Indian Institute of Space
		Science and Technology, Trivandrum 695 547, India.
		Email: {\tt niteshverma.16@res.iist.ac.in}.}
	}

\date{}
\maketitle

	\begin{abstract}
A virtual element discretisation for the numerical approximation of the three-field formulation of linear poroelasticity introduced in [R.\ Oyarz\'ua and R.\ Ruiz-Baier, Locking-free finite
		element methods for poroelasticity, {\it  SIAM J.\ Numer.\ Anal.}  \textbf{54} (2016)
		2951--2973] is proposed. The treatment is extended to include also the transient case. Appropriate poroelasticity projector operators are introduced and they assist in deriving energy bounds for the time-dependent discrete problem. Under standard assumptions on the computational domain,
		optimal   a priori error estimates are established. Furthermore, the accuracy of the method is verified numerically  through a set of computational tests.
	\end{abstract}
	
\noindent
{\bf Keywords}: Biot equations, virtual element schemes, time-dependent problems,
		error analysis.
	
\smallskip\noindent
{\bf Mathematics subject classifications (2000)}: 65M60, 74F10, 35K57, 74L15.
	
\maketitle

	\section{Introduction}
	
	The equations of linear poroelasticity describe the interaction between interstitial fluid flowing through deformable porous media. This problem, often referred to as Biot's consolidation,  has wide range of applications in diverse areas including biomechanics, groundwater management, oil extraction, earthquake engineering, or material sciences\cite{arega08,lee19,mauk03,moee13,peters02,sacco17}.
	
	A variety of numerical methods has been used to generate approximate solutions to the Biot consolidation problem. Modern examples include high-order finite differences\cite{gaspar02}, conforming finite elements\cite{aguilar08,murad96}, mixed finite element methods\cite{berger17,hong18},
	nodal and local discontinuous Galerkin methods\cite{hu17,riviere17},
	finite volume schemes\cite{asadi14,naumovich06}, and combined/hybrid
	discretisations\cite{coulet19,fu19,kumar19},   and we also
	point out Ref.~\cite{boffi16} where the authors present a polygonal discretisation
	based on hybrid high-order methods.
	These schemes are constructed using different formulations of the
	governing equations including primal and several types of mixed forms.
	
	In this paper we propose a virtual element method (VEM) using a three-field formulation of the time-dependent poromechanics equations. We base the development following the formulation proposed  in Refs.~\cite{lee17} and~\cite{oyarzua16} for the stationary Biot system and extend the discrete analysis  to include the quasi-steady case.
	We stress that this is not the first VEM formulation for the Biot equations,
	as Ref. \cite{coulet19} proposes a method that combines  VEM and finite volumes for
	the solid and fluid parts of the problem, respectively.
	
	Advantages of VEM include the relaxation of computing basis functions
	(of particular usefulness when dealing with high-order approximations),
	and the flexibility of computing solutions on general-shaped meshes
	(for instance, including non-convex elements).
	In addition, one works locally on polygonal elements, without the need of passing through a
	reference element, see e.g. Refs.~\cite{ahmad13,daveiga-b13,daveiga-e13,daveiga-g13,mora18}.
	This further simplifies the implementation of the building blocks
	of the numerical method. Polytopal meshes can be now generated with
	accurate tools such as  CD-adapto\cite{daveiga-div17}.
	
	Here we consider a pair of virtual elements for displacement and total pressure which is stable. This pair, introduced in
	Ref.~\cite{antonietti14},  can be regarded as a generalisation of the Bernardi-Raugel finite elements (piecewise linear elements enriched with bubbles normal to the faces for the displacement components, and piecewise constant approximations for total pressure, see e.g. Ref. \cite{girault86}).
	On the other hand, no compatibility between the spaces for total pressure and fluid pressure is needed. Therefore
	for the fluid pressure we employ the enhanced virtual element space
	from Refs.~\cite{abms20,daveiga-g13,vb2016}, which allows us  to
	construct a suitable projector onto piecewise linear functions.
	All this is restricted, for sake of simplicity, to the
	lowest-order 2D case, but one could extend the analysis to higher polynomial degrees
	and the 3D, for instance considering the discrete inf-sup stable pair from Ref. \cite{daveiga-div17}
	for the Stokes problem.
	The main difficulties in our analysis lie in the definition of an adequate projection operator that allows to treat the time-dependent
	problem. To handle this issue  we have combined Stokes-like and elliptic
	operators that constitute the new map, here named
	poroelastic projector. We derive stability for semi-discrete and fully-discrete approximations and establish
	the optimal convergence of the virtual element scheme in the natural norms.
	These bounds turn to be robust with respect
	to the dilation modulus of the deformable porous structure.
	A further advantage of the proposed virtual discretisation
	is that it combines primal and mixed virtual element spaces.
	In addition, this work can be seen as a stepping stone in the study of more complex
	coupled problems including interface poroelastic phenomena and multiphysics (see, for instance, Refs. \cite{adgmr20,gira11,vermaI}).

	We have arranged the contents of the paper as follows. Section \ref{sec:model} is devoted to the definition of the linear poroelasticity problem, and it also contains the precise definition of the continuous weak formulation using three fields, and presents a few preliminary results needed in the semi-discrete analysis as well. In Section~\ref{sec:VEapprox} we introduce the virtual element approximation in semi-discrete form. We specify the virtual element spaces, we identify the degrees of freedom, and derive appropriate estimates for the discrete bilinear forms. The a priori error analysis has been derived in Section~\ref{sec:estimates}, with the help of the newly introduced poroelastic projection operator. The implementation of the problem on different
	families of polygonal meshes is then discussed in Section~\ref{sec:results}, where
	we confirm the theoretical rates of convergence and produce some applicative tests to gain insight on the behaviour of the model problem.
	A summary and concluding remarks are collected in Section~\ref{sec:concl}.

	\section{Equations of time-dependent linear poroelasticity using total pressure}\label{sec:model}
	
	\subsection{Strong form of the governing equations}
	A deformable porous medium is assumed to occupy the domain~$\Omega$, where $\Omega$~is an open and
	bounded set in $\RR^2$ (simply for sake of notational convenience) with a  Lipschitz continuous boundary~$\partial \Omega$.
	The medium is
	composed by a mixture of
	incompressible grains forming a linearly elastic skeleton, as well as interstitial fluid. The mathematical description of this interaction between deformation and flow
	can be placed in the context of
	the classical Biot problem,  written as follows (see for instance, the exposition in Ref. \cite{showalter00}).
	In the absence of gravitational forces, and
	for a given body load $\bb(t):\Omega\to \RR^2$ and a volumetric source or sink $\ell(t):\Omega\to \RR$, one seeks,  for each time $t\in (0,t_{\mathrm{final}}]$,
	the vector of displacements of the  porous skeleton, $\bu(t):\Omega\to \RR^2$, and the pore pressure of the
	fluid, $p(t):\Omega\to\RR$, satisfying the mass conservation of the fluid content and momentum balance equations
	\begin{align*}
	\partial_t(c_0 p + \alpha\,\vdiv \bu ) - \frac{1}{\eta} \vdiv \bigl(\kappa(\bx) \nabla p \bigr) &= \ell,
	\\
	-\bdiv \bigl(  \lambda (\vdiv \bu)\bI + 2\mu \beps(\bu)- \alpha p\bI \bigr) & = \rho \bb \quad
	\text{in $\Omega\times(0,t_{\mathrm{final}}]$},
	\end{align*}
	where $\kappa(\bx)$~is the hydraulic conductivity of the porous medium (the mobility matrix, possibly anisotropic),  $\rho$~is the
	density of the solid material, $\eta$ is the constant viscosity of the
	interstitial fluid, $c_0$~is the constrained specific storage coefficient {(typically small and representing the amount of
		fluid that can be injected during an increase of pressure maintaining a constant bulk volume)}, $\alpha$~is the Biot-Willis
	consolidation parameter (typically close to one), and $\mu$ and~$\lambda$ are the shear and dilation moduli associated with the
	constitutive law of the solid structure. {The total stress
		\begin{align*}
		\bsigma = \lambda (\vdiv \bu)\bI + 2\mu \beps(\bu)- \alpha p \bI \end{align*}
		receives contribution from the effective mechanical stress of a Hookean elastic material, $\lambda (\vdiv \bu)\bI + 2\mu \beps(\bu)$, and
		the non-viscous fluid stress represented only by the pressure scaled with $\alpha$}.
	As in Refs. \cite{lee17,oyarzua16}, we consider here the volumetric part of the total stress $\psi$, hereafter called \emph{total pressure}, as one of the primary variables. And this allows us to rewrite the time-dependent problem as
	\begin{align} \label{eq:Biot} \begin{split}
	-\bdiv \bigl( 2\mu \beps(\bu)- \psi \bI \bigr)& = \rho \bb,  \\
	\biggl(c_0
	+\frac{\alpha^2}{\lambda}\biggr) \partial_t p -\frac{\alpha}{\lambda}  \partial_t \psi
	- \frac{1}{\eta} \vdiv(\kappa  \nabla p ) &= \ell,  \\
	\psi - \alpha p + \lambda \vdiv\bu &= 0 \quad \text{in $\Omega\times(0,t_{\mathrm{final}}]$},  \end{split}
	\end{align}
	which we endow with appropriate initial data ({for instance, assuming that the system is at rest})
	\begin{equation*}
	p(0) =0, \quad \bu(0) = \cero  \quad  \text{in $\Omega\times\{0\}$}
	\end{equation*}
	(which we can use to compute the initial condition for the total pressure $\psi(0)=0$) and boundary conditions in the following manner
	\begin{align}\label{bc:Gamma}
	\bu = \cero\quad \text{and} \quad \frac{\kappa}{\eta} \nabla p \cdot\nn &= 0 &  \text{on $\Gamma\times(0,t_{\text{final}}]$},\\
	\label{bc:Sigma}
	\bigl( 2\mu\beps(\bu) - \psi\,\bI \bigr) \nn = \cero \quad\text{and}\quad p&=0  &\text{on $\Sigma\times(0,t_{\text{final}}]$},
	\end{align}
	where the boundary $\partial\Omega = \Gamma\cup\Sigma$  is disjointly split into $\Gamma$ and $\Sigma$
	where we prescribe clamped boundaries and zero fluid normal fluxes; and zero (total) traction together with constant fluid pressure,
	respectively. Homogeneity of the boundary conditions is only assumed to simplify the exposition of the subsequent analysis.

	\subsection{Weak formulation}
	In order to obtain a weak form (in space) for \eqref{eq:Biot}, we define the function spaces
	$$\bV:=[H^1_{\Gamma}(\Omega)]^2, \; Q:= H^1_{\Sigma}(\Omega),\; Z:=L^2(\Omega).$$
	Multiplying \eqref{eq:Biot} by adequate test functions, integrating by parts (in space) whenever appropriate, and using the boundary conditions \eqref{bc:Gamma}-\eqref{bc:Sigma}, leads to the following variational problem: For a given $t>0$, find $\bu(t) \in \bV, p(t) \in Q, \psi(t) \in Z$ such that
	\begin{alignat}{5}
	&&   a_1(\bu,\bv)   &&                 &\;+&\; b_1(\bv,\psi)     &=&\;F(\bv)&\quad\forall \bv\in\bV, \label{weak-u}\\
	{\tilde{a}_2(\partial_t p,q)} &\; +&               &&      a_2(p,q)   &\;-&\;    b_2(q, \, \partial_t \psi) &=&\;G(q) &\quad\forall q\in Q, \label{weak-p}\\
	&&b_1(\bu,\phi)  &\;+\;& b_2(p,\phi)&\;-&\; a_3(\psi,\phi) &=&0 &\quad\forall\phi\in Z, \label{weak-psi}
	\end{alignat}
	where the bilinear forms
	$a_1:\bV\times\bV \to \RR$,
	$a_2:Q\times Q \to \RR$, $a_3: Z\times Z\to \RR$,
	$b_1:\bV \times Z \to \RR$, $b_2:Q \times Z\to \RR$,
	and linear functionals $F:\bV \to\RR$, $G:Q \to\RR$, are given by the following respective expressions:
	\begin{align} \label{bilinearforms} \begin{split}
	& a_1(\bu,\bv) := 2\mu \int_{\Omega} \beps(\bu):\beps(\bv),\quad b_1(\bv,\phi):= -\int_{\Omega}\phi\vdiv \bv, \\ &  F(\bv) := \int_{\Omega}  \rho\bb\cdot\bv, \quad
	\tilde{a}_2(p,q)  := \biggl( c_0 +\frac{\alpha^2}{\lambda}\biggr)
	\int_{\Omega}  p q ,\\ &  a_2(p,q)  :=\frac{1}{\eta}\int_{\Omega}\kappa \nabla p\cdot\nabla q,  \quad
	b_2(p,\phi):=  \frac{\alpha}{\lambda} \int_{\Omega} p\phi, \\ &
	a_3(\psi,\phi):=  \frac{1}{\lambda} \int_{\Omega} \psi\phi, \quad
	G(q) := \int_{\Omega} \ell \; q.
	\end{split} \end{align}
	
	\subsection{Properties of the bilinear forms and linear functionals}
	We now list the continuity, coercivity, and inf-sup conditions for the variational forms in  \eqref{bilinearforms}.
	These are employed in Ref. \cite{oyarzua16} to derive the well-posedness of the stationary form of \eqref{eq:Biot}.
	
	First we have the bounds
	\begin{align*}
	a_1(\bu, \bv) & \le 2\mu \| \beps(\bu) \|_0 \| \beps (\bv) \|_0 \le C \| \bu \|_1 \| \bv \|_1  & \quad \text{for all  $\bu, \bv \in \bV$,}  \\
	b_1(\bv, \phi)  & \le \| \vdiv \bv \|_0 \| \phi \|_0 \le C \| \bv \|_1  \| \phi \|_0 & \quad  \text{for all $\bv \in \bV$ and  $\phi \in Z$,} \\
	a_2(p, q) & \le  \frac{\kappa_{\max}}{ \eta} | p|_1 | q|_1 \le \frac{\kappa_{\max}}{ \eta} \| p \|_1 \| q \|_1  &\quad \text{for all $ p, q \in Q$, }\\
	b_2(q, \phi) & \le { \frac{\alpha}{\lambda} }\| q \|_0 \| \phi \|_0, \quad
	a_3(\psi, \phi)  \le \frac{1}{\lambda} \| \psi \|_0 \| \phi \|_0  &\quad \text{for all $q \in Q$ and $\psi, \phi \in Z$,}  \\
	F(\bv) & \le \rho \| \bb \|_0 \| \bv \|_1, \quad
	G(q) \; \le  \| \ell \|_0 \| q \|_0  & \quad  \text{for all $\bv \in \bV$ and $q \in Q$,}
	\end{align*}
	then the coercivity of the diagonal bilinear forms, i.e.,
	\begin{align*}
	a_1(\bv, \bv) & = 2\mu \| \beps(\bv) \|_0^2 \ge C \| \bv \|_1^2  &\quad \text{for all  $\bv \in \bV$,}  \\
	a_2(q, q) & \ge  \frac{\kappa_{\min}}{\eta} \| q \|_1^2 &\quad \text{for all  $q \in Q$,}  \\
	a_3(\phi, \phi) & = \frac{1}{\lambda} \| \phi \|_0^2 &\quad \text{for all  $\phi \in Z$,}
	\end{align*}
	and finally  satisfaction of the inf-sup condition, viz.\  there exists a constant $\beta >0$ such that  	\begin{align*}
	\sup_{\bv (\neq 0) \in \bV} \frac{b_1(\bv, \phi)}{\| \bv \|_1} \ge \beta \| \phi \|_0 \quad \text{for all $\phi \in Z$.}
	\end{align*}
	
	The solvability of the continuous problem is not the focus here, and we  refer to Ref.~\cite{showalter00}
	for the corresponding
	well-posedness and regularity results.
	
	\section{Virtual element approximation} \label{sec:VEapprox}
\subsection{Discrete spaces and degrees of freedom}
	In this section we construct a VEM associated with \eqref{weak-u}--\eqref{weak-psi}. We start denoting
	by $\{{\mathcal T}_h\}_h$ a sequence of partitions of the domain~$\Omega$
	into general polygons~$K$ (open and simply connected sets whose boundary $\partial K$ is a non-intersecting poly-line
	consisting of a finite number of straight line segments) having diameter $h_K$, and define as meshsize
	$h:=\max_{K\in{\mathcal T}_h}h_K$. By~$N^v_K$ we will denote the number of vertices in the polygon~$K$,
	$N^e_K$ will stand for the number of edges on $\partial K$,
	and $e$ a generic edge of $\mathcal{T}_h$. For all $e\in \partial K$, we denote by~$\boldsymbol{n}_K^e$
	the unit normal pointing outwards $K$, $\bt^e_K$ the unit tangent vector along~$e$ on~$K$, and $V_i$~represents the $i^{th}$ vertex of the polygon~$K$.
	
	As in Ref.~\cite{daveiga-b13} we need to assume  regularity of the
	polygonal meshes in the following sense: there exists
	$C_{{\mathcal T}}>0$ such that, for every $h$ and every $K\in {\mathcal T}_h$,
	the ratio between the shortest edge
	and $h_K$~is larger than $C_{{\mathcal T}}$; and
	$K\in{\mathcal T}_h$ is star-shaped with
	respect to every point within a  ball of radius~$C_{{\mathcal T}}h_K$.
	
	Denoting by $\mathbb{P}_k(K)$ the space of polynomials of degree up to $k$, defined locally on $K\in\cT_h$,
	we proceed to characterise the scalar energy projection operator $\Pi_{K}^{\nabla}: H^1(K) \rightarrow \mathbb{P}_1(K)$
	by the relations
	\begin{equation}\label{eq:proj-1}
	\bigl(\nabla (\Pi_{K}^{\nabla} q - q), \nabla r \bigr)_{0,K} = 0, \qquad P^0_K(\Pi_{K}^{\nabla} q - q)=0,
	\end{equation}
	valid for all $q \in H^1(K)$ and  $r \in \mathbb{P}_1(K)$, and where $( \cdot , \cdot )_{0,K}$
	denotes the $L^2$-product on $K$, and	
	$$ P^0_K(q):= \int_{\partial K} q\, \mathrm{d}s.$$
	
	If we now denote by $\mathcal{M}_k(K)$ the space of monomials of degree up to $k$, defined locally on $K\in\cT_h$,
	we can define, on each polygon $K \in \cT_h$, the local virtual element spaces for displacement,
	fluid pressure, and total pressure, as
	\begin{align} \label{VE-spaces} \begin{split}
	\bV_h(K) & := \Biggl\{ \bv_h \in [H^1(K)]^2 : \bv_h |_{\partial K} \in \mathbb{B}(\partial K),
	\\ &  \quad \qquad \begin{cases}
	- \Delta \bv_h - \nabla s = \boldsymbol{0} \text{ in } K, \\
	\vdiv \bv_h \in \mathbb{P}_0(K)
	\end{cases}\! \text{for some } s \in L^2_0(K)
	\Biggr\},  \\
	Q_h(K) &  := \bigl\{    q_h \in H^1(K) \cap C^0(\partial K): \;q_h|_e \in \mathbb{P}_1(e), \forall e \in \partial K, \\ & \quad  \qquad \Delta q_h|_K \in \mathbb{P}_1(K), \;   (\Pi_K^{\nabla} q_h - q_h, m_{\alpha})_{0,K} = 0\ \forall m_{\alpha} \in \mathcal{M}_1(K) \bigr\},  \\
	Z_h(K) & := \mathbb{P}_0(K),  \end{split} 	\end{align}
	where we define
	$$ \mathbb{B} (\partial K) := \bigl\{ \bv_h \in [C^0(\partial K)]^2: \bv_h|_e \cdot \bt^e_K \in \mathbb{P}_1(e), \bv_h|_e \cdot \nn^e_K \in \mathbb{P}_2(e), \forall e \in \partial K \bigr\}.$$
	
	It is clear from the above definitions that the dimension of~$\bV_h(K)$ is~$3N^e_K$, the
	dimension of~$Q_h(K)$ is~$N^v_K$, and that of~$Z_h(K)$ is one.
	Note that the virtual element space of degree~$k=1$, introduced in Ref.~\cite{ahmad13}, has been utilised here for the approximation of fluid pressure. This facilitates the computation of the $L^2$-projection onto the space of polynomials of degree up to $1$ (which are
	required in order to define the zero-order discrete bilinear form on~$Q_h(K)$).
	Next, and in order to take advantage of the features of VEM discretisations (for instance, estimation of the terms of the discrete formulation
	without explicit computation of basis functions), we need to specify the degrees of freedom associated with~\eqref{VE-spaces}. These
	entities will consist
	of discrete functionals of the type (taking as an example the space for total pressure)
	$$(D_i): Z_{h|K} \to \mathbb{R}; \qquad  Z_{h|K} \ni \phi \mapsto D_i(\phi),$$
	and we start with the degrees of freedom for the local displacement space $\bV_h(K)$:
	\begin{itemize}
		\item ($D_v1$) the values of a discrete displacement $\bv_h$ at vertices of the element;
		\item ($D_v2$) the normal displacement $\bv_h \cdot \nn^e_K$ at the mid-point of each edge $e \in \partial K$.
	\end{itemize}
	Then we precise the degrees of freedom for the local fluid pressure space $Q_h(K)$:
	\begin{itemize}
		\item  ($D_q$) the values of $q_h$ at vertices of the polygonal element.
	\end{itemize}
	And similarly, the degree of freedom for the local total pressure space $Z_h(K)$:
	\begin{itemize}
		\item ($D_z$) the value of $\phi_h$ over $K$.
	\end{itemize}
	
	It has been proven elsewhere (see e.g.\ Refs.~\cite{ahmad13,antonietti14,daveiga-b13,daveiga-e13}) that these degrees of freedom
	are unisolvent in their respective spaces.
	We also define global counterparts of the local virtual element spaces as follows:
	\begin{align*}
	\bV_h &: = \lbrace \bv_h \in \bV : \bv_h |_K \in \bV_h(K) \; \forall K \in \mathcal{T}_h \rbrace, \\
	Q_h &:= \lbrace q_h \in Q : q_h |_K \in Q_h(K) \; \forall K \in \mathcal{T}_h \rbrace,\\
	Z_h &:= \lbrace \phi_h \in Z : \phi_h |_K \in Z_h(K) \; \forall K \in \mathcal{T}_h \rbrace.
	\end{align*}
	In addition, we  denote by~$N^V$~denotes the number of degrees of freedom for~$\bV_h$,
	by~$N^Q$ the number of degrees of freedom for~$Q_h$, and
	by~$\text{dof}_r(s)$  the $r$-th degree of a given function~$s$.
	
	\subsection{Projection operators}
	Besides \eqref{eq:proj-1} we need to define other projectors. Regarding restricted quantities,
	and in particular, bilinear forms restricted locally to a single element, we will use the notation
	$\mathcal{B}^K(\cdot,\cdot) = \mathcal{B}(\cdot,\cdot)|_K$
	for a generic bilinear form~$\mathcal{B}(\cdot,\cdot)$. Then we can define
	the  energy projection $\bPi^{\beps}_K : \bV_h(K) \rightarrow [\mathbb{P}_1(K)]^2$ such that
	\begin{align*}
	& a_1^K(\bPi^{\beps}_K \bv - \bv , \boldsymbol{r}) = 0 , \quad m^K(\bPi^{\beps}_K \bv - \bv,\boldsymbol{r}) = 0 \\
	& \text{for all $\bv \in \bV_h(K)$ and $\boldsymbol{r} \in [\mathbb{P}_1(K)]^2$,}
	\end{align*}	
	where  we define
	\begin{align*}
	m^K (\bv,\boldsymbol{r}):= \frac{1}{N^v_K} \sum_{i=1}^{N^v_K} \bv(V_i) \cdot \boldsymbol{r}(V_i) \quad
	\text{for $\boldsymbol{r} \in  \ker (a_1^K(\cdot,\cdot))$.}
	\end{align*}
	Then, using the degree of freedom~$(D_v1)$, we can readily
	compute the bilinear form $m^K(\bv, \boldsymbol{r})$ for all $\boldsymbol{r} \in  \ker ( a_1^K(\cdot, \cdot))$ and~$\bv \in \bV_h(K)$.
	
	Next, for all $\bv \in \bV_h(K)$ let us consider the localised form
	\begin{align*}
	a_1^K(\bv, \boldsymbol{r}) = \int_K \beps (\bv): \beps  (\boldsymbol{r}) = -\int_K \bv \cdot \bdiv \bigl( \beps(\boldsymbol{r}) \bigr) + \int_{\partial K} \bv \cdot \bigl(\beps(\boldsymbol{r}) \nn^e_K \bigr) \, \mathrm{d}s.
	\end{align*}
	One readily sees that  $\bdiv(\beps(\boldsymbol{r})) = \cero$
	and $\beps(\boldsymbol{r})$ is constant for all $\boldsymbol{r} \in [\mathbb{P}_1(K)]^2$. Therefore
	the other term can be simply rewritten as\cite{daveiga11}
	\begin{align}\label{eq:split01} \begin{split}
	& \int_{\partial K} \bv \cdot \bigl(\beps(\boldsymbol{r}) \nn^e_K\bigr) \, \mathrm{d}s \\ &  =  \sum_{e \in \partial K}
	\left\{ \bigl(\beps(\boldsymbol{r}) \nn^e_K \cdot \boldsymbol{t}^e_K \bigr) \int_e (\bv \cdot \boldsymbol{t}^e_K) +  \bigl(\beps(\boldsymbol{r}) \nn^e_K \cdot \nn^e_K \bigr) \int_e (\bv \cdot \nn^e_K) \right\}.
	\end{split}
	\end{align}
	We can compute first term on the right-hand side of \eqref{eq:split01}
	using the degree of freedom $(D_v1)$ in conjunction with the trapezoidal rule,
	whereas for the second term it suffices to use the degrees of freedom $(D_v1)$ and $(D_v2)$
	together with a Gauss-Lobatto quadrature. Thus, the operator $\bPi^{\beps}_K$ is computable on $\bV_h(K)$.
	
	We now define the $L^2$-projection on the scalar space as $\Pi^0_K: L^2(K) \rightarrow \mathbb{P}_1(K)$ such that
	\begin{align*}
	(\Pi_{K}^0 q - q, r)_{0,K} = 0, \quad q \in L^2(K), r \in \mathbb{P}_1(K),
	\end{align*}
	and we can clearly verify that $\Pi_{K}^0 q_h =   \Pi_{K}^{\nabla} q_h, \; \forall q_h \in Q_h$.
	
	Finally, we consider the $L^2$-projection onto the piecewise constant functions, $\Pi^{0,0}_K: L^2(K) \rightarrow \mathbb{P}_0(K)$ and
	 $\,\bPi^{0,0}_K: L^2(K)^2 \rightarrow \mathbb{P}_0(K)^2$, for scalar and
    vector fields, respectively.
	We observe that the latter is fully computable on the virtual
	space $\bV_h(K)$ \cite{daveiga19}.
	
	\subsection{Discrete bilinear forms and formulations}
	For all $\bu_h, \bv_h \in \bV_h(K)$ and $p_h, q_h \in Q_h(K)$ we now
	define the  local discrete bilinear forms
	\begin{align*}
	a_1^h(\bu_h, \bv_h) |_K &:= a_1^K(\bPi^{\beps}_K \bu_h, \bPi^{\beps}_K \bv_h) + S_1^K \bigl((\bI-\bPi^{\beps}_K)\bu_h, (\bI-\bPi^{\beps}_K)\bv_h \bigr),\\
	a_2^h(p_h,q_h)|_K &:= a_2^K(\Pi^{\nabla}_K p_h, \Pi^{\nabla}_K q_h) + S_2^K \bigl((I-\Pi^{\nabla}_K)p_h, (I-\Pi^{\nabla}_K)q_h \bigr), \\
	\tilde{a}_2^h( p_h,q_h)|_K &:= \tilde{a}_2^K( \Pi^0_K p_h,\Pi^0_K q_h) + S_0^K \bigl((I-\Pi^0_K)p_h, (I-\Pi^0_K)q_h \bigr),
	\end{align*}
	where the stabilisation of the bilinear forms $S_1^K(\cdot, \cdot), S_2^K(\cdot, \cdot), S_0^K(\cdot, \cdot)$
	acting on the kernel of their respective operators $\bPi^{\beps}_K,\; \Pi^{\nabla}_K,\; \Pi^0_K$, are defined as
	\begin{align*}
	S_1^K(\bu_h,\bv_h) &:= \sigma_1^K\sum_{l=1}^{N^V} \text{dof}_l(\bu_h) \text{dof}_l(\bv_h), \quad \bu_h, \bv_h \in \text{ker}(\bPi^{\beps}_K);\\
	S_2^K(p_h,q_h) &:= \sigma_2^K\sum_{l=1}^{N^Q} \text{dof}_l(p_h) \text{dof}_l(q_h), \quad p_h, q_h \in \text{ker} (\Pi^{\nabla}_K); \\
	S_0^K(p_h,q_h) &:= \sigma_0^K\text{area}(K) \sum_{l=1}^{N^Q} \text{dof}_l(p_h) \text{dof}_l(q_h), \quad p_h, q_h \in \text{ker} (\Pi^0_K),
	\end{align*}
	where $\sigma_1^K,\sigma_2^K$ and $\sigma_0^K$ are positive multiplicative factors to take into
	account the magnitude of the physical parameters (independent of a mesh size).
	
	Note that  for all $\bv_h \in \bV_h(K),\ q_h \in Q_h(K)$,
	these stabilising terms satisfy the following relations\cite{antonietti14,daveiga11}:
	\begin{align} \label{bound:stab}
	\begin{split}
	\alpha_*a_1^K(\bv_h,\bv_h) &\le S_1^K(\bv_h,\bv_h) \le \alpha^*a_1^K(\bv_h,\bv_h),
	\\
	\zeta_* a_2^K(q_h,q_h)& \le S_2^K(q_h,q_h) \le \zeta^*a_2^K(q_h,q_h), \\
	\tilde{\zeta}_* \tilde{a}_2^K(q_h,q_h) & \le S_0^K(q_h,q_h) \le \tilde{\zeta}^*\tilde{a}_2^K(q_h,q_h),
	\end{split}
	\end{align}
	where $\alpha_*, \alpha^*, \zeta_*, \zeta^*, \tilde{\zeta}_*, \tilde{\zeta}^*$ are positive constants independent of $K$ and $h_K$.
	Now, for all $\bu_h, \bv_h \in \bV_h,\; p_h,q_h \in Q_h$, the
	global discrete bilinear forms are specified as
	\begin{align*}
	& a_1^h(\bu_h,\bv_h):= \sum_{K \in \mathcal{T}_h} a_1^h(\bu_h,\bv_h)|_K, \quad
	a_2^h(p_h,q_h):= \sum_{K \in \mathcal{T}_h} a_2^h(p_h,q_h)|_K, \\
	& \tilde{a}_2^h(p_h,q_h):= \sum_{K \in \mathcal{T}_h} \tilde{a}_2^h(p_h,q_h)|_K, \quad b_1(\bv_h, \phi_h) := \sum_{K \in \mathcal{T}_h} b_1^K (\bv_h, \phi_h)\\
	& a_3(\psi_h, \phi_h) := \sum_{K \in \mathcal{T}_h} a_3^K(\psi_h, \phi_h), \quad b_2(q_h, \phi_h) := \sum_{K \in \mathcal{T}_h} b_2^K(q_h, \phi_h)
	\end{align*}

	In addition, we observe that
	\begin{equation}\label{bfb2}
	b_2(p_h,\phi_h)= \frac{\alpha}{\lambda} \sum_{K \in \mathcal{T}_h} \int_{K} p_h\phi_h
	=\frac{\alpha}{\lambda}\sum_{K \in \mathcal{T}_h}\int_{K}\Pi^0_Kp_h\phi_h.
	\end{equation}
	
	On the other hand, the discrete linear functionals, defined on each element $K$, are
	\begin{align*}
	F^h(\bv_h)|_K:= \rho \int_K  \bb_h(\cdot, t) \cdot \bv_h, \quad \bv_h \in \bV_h; \quad
	G^h(q_h)|_K:= \int_K \ell_h(\cdot,t) q_h, \quad q_h \in Q_h,
	\end{align*}
	where the discrete load and volumetric source are given by:
	$$\bb_h(\cdot, t)|_K:= \bPi^{0,0}_K \bb (\cdot,t), \quad \ell_h(\cdot, t)|_K:= \Pi^0_K \ell(\cdot, t). $$
	In view of \eqref{bound:stab}, the discrete bilinear forms $a_1^h(\cdot, \cdot)$, $\tilde{a}_2^h(\cdot, \cdot) $ and $a_2^h(\cdot, \cdot)$ are coercive and bounded in the following manner \cite{antonietti14,daveiga-b13,vb2016}
		\begin{align*}
		a_1^h(\bu_h, \bu_h) & \ge \min \lbrace 1, \alpha_* \rbrace\,  2 \mu \, \| \beps(\bu_h) \|_0^2 & \quad \text{for all  $\bu_h \in \bV_h$,} \\
		a_2^h(q_h, q_h) & \ge \min \lbrace 1, \zeta_* \rbrace\, \frac{\kappa_{\min}}{\eta} \, \| \nabla q_h \|_0^2 & \quad \text{for all  $q_h \in Q_h$,} \\
		\tilde{a}_2^h(q_h, q_h) & \ge \min \lbrace 1, \tilde{\zeta}_* \rbrace\, \Big( c_0 +  \frac{\alpha^2}{\lambda} \Big) \, \| q_h \|_0^2 & \quad \text{for all  $q_h \in Q_h$,} \\
		a_1^h(\bu_h, \bv_h) & \le \max \lbrace 1, \alpha^* \rbrace\, 2 \mu \, \| \beps(\bu_h) \|_0 \| \beps (\bv_h) \|_0 & \quad \text{for all  $\bu_h, \bv_h \in \bV_h$,} \\
		a_2^h(p_h, q_h) & \le \max \lbrace 1, \zeta^* \rbrace\, \frac{\kappa_{\max}}{\eta} \, \| \nabla p_h \|_0 \| \nabla q_h \|_0 & \quad \text{for all  $p_h, q_h \in Q_h$,} \\
		\tilde{a}_2^h(p_h, q_h) & \le \max \lbrace 1, \tilde{\zeta}^* \rbrace\, \Big( c_0 +  \frac{\alpha^2}{\lambda} \Big)\, \| p_h \|_0 \| q_h \|_0 & \quad \text{for all  $p_h, q_h \in Q_h$}.
		\end{align*}
		Moreover, by using definitions of the operators $\bPi^{0,0}_K$ and $\Pi^0_K$, the linear functionals hold the following bounds:
		\begin{align*}
		F^h(\bv_h) & \le \rho \| \bb\|_0 \|\bv_h\|_0 & \quad \text{for all  $\bv_h \in \bV_h$,} \\
		G^h(q_h) & \le \| \ell \|_0 \| q_h \|_0 & \quad \text{for all  $q_h \in Q_h$}.
		\end{align*}

	We also recall that the bilinear form $b_1(\cdot,\cdot)$ satisfies
	the following discrete inf-sup condition on $\bV_h\times Z_h$:
	there exists $\tilde{\beta}>0$, independent of $h$, such that (see Ref.~\cite{antonietti14}),
	\begin{equation}\label{discr-infsup}
	\sup_{\bv_h (\neq 0) \in \bV_h} \frac{b_1(\bv_h, \phi_h)}{\| \bv_h \|_1}
	\ge \tilde{\beta} \| \phi_h \|_0 \quad \text{for all $\phi_h \in Z_h$.}
	\end{equation}

	The semidiscrete virtual element formulation is now defined as follows:  For all $t>0$, given $\bu_h(0)$, $p_h(0)$, $\psi_h(0)$, find $\bu_h \in \bL^2((0,t_{\text{final}}],\bV_h)$, $ p_h \in L^2((0,t_{\text{final}}],Q_h), \;\psi_h \in L^2((0,t_{\text{final}}],Z_h)$ with $ \partial_t p_h \in L^2((0,t_{\text{final}}],Q_h)$, $\partial_t \psi_h \in L^2((0,t_{\text{final}}],Z_h)$ such that
	\begin{alignat}{5}
	&&   a_1^h(\bu_h,\bv_h)   &&                 &\;+&\; b_1(\bv_h,\psi_h)     &=&\;F^h(\bv_h)&\quad\forall \bv_h \in \bV_h, \label{weak-uh}\\
	\tilde{a}_2^h(\partial_t p_h,q_h) &\; +&               &&      a_2^h(p_h,q_h)   &\;-&\;   b_2( q_h, \partial_t \psi_h)  &=&\;G^h(q_h) &\quad\forall q_h \in Q_h, \label{weak-ph}\\
	&&b_1(\bu_h,\phi_h)  &\;+\;& b_2(p_h,\phi_h)&\;-&\; a_3(\psi_h,\phi_h) &=&0 &\quad\forall\phi_h \in Z_h. \label{weak-psih}
	\end{alignat}
	
	Now we establish the stability of \eqref{weak-uh}--\eqref{weak-psih}.
	\begin{theorem}[Stability of the semi-discrete problem]
		Let $(\bu_h(t), p_h(t), \psi_h(t))$ be a solution of problem
		\eqref{weak-uh}--\eqref{weak-psih} for each $t \in (0,t_{\text{final}}]$. Then  there exists a constant $C$ independent of $h, \lambda$ such that
		\begin{align} \label{bound:semi-stability} \begin{split}
		& \mu \| \beps(\bu_h(t)) \|_0^2  + \| \psi_h(t) \|_0^2 + c_0 \| p_h(t)\|_0^2 + \frac{\kappa_{\min}}{\eta} \int_0^t\| \nabla p_h(s) \|_0^2 \, \mathrm{d}s  \\
		&   \le C \bigg( \| \beps(\bu_h(0))\|_0^2 + \| p_h(0)\|_0^2 + \| \psi_h(0) \|_0^2 +
		\int_0^t \| \partial_t \bb(s)\|_0^2 \, \mathrm{d}s \\
		& \qquad \qquad + \sup_{t \in [0,t_{\text{final}}] }  \| \bb(t)\|_0^2 + \int_0^t \| \ell(s) \|_0^2 \, \mathrm{d}s \bigg).
		\end{split} \end{align}		
	\end{theorem}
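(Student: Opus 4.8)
The plan is a discrete energy argument on \eqref{weak-uh}--\eqref{weak-psih}, with test functions chosen so that the off-diagonal couplings either cancel or reassemble into exact time derivatives. First note that \eqref{weak-uh} determines $a_1^h(\bu_h(t),\cdot)$ from data differentiable in $t$ and from $\psi_h$; since $\bV_h$ is finite-dimensional this yields $\partial_t\bu_h\in\bV_h$, so $\partial_t\bu_h$ is admissible as a test function and \eqref{weak-psih} may be differentiated in time. Taking $\bv_h=\partial_t\bu_h$ in \eqref{weak-uh}, $q_h=p_h$ in \eqref{weak-ph}, and $\phi_h=\psi_h$ in the time-derivative of \eqref{weak-psih}, then adding the first two and subtracting the third, the term $b_1(\partial_t\bu_h,\psi_h)$ drops out while $b_2(p_h,\partial_t\psi_h)+b_2(\partial_t p_h,\psi_h)=\tfrac{\mathrm{d}}{\mathrm{d}t}b_2(p_h,\psi_h)$, producing
\begin{align*}
\tfrac12\tfrac{\mathrm{d}}{\mathrm{d}t}\mathcal{E}(t)+a_2^h(p_h,p_h)&=F^h(\partial_t\bu_h)+G^h(p_h),\\
\mathcal{E}(t)&:=a_1^h(\bu_h,\bu_h)+a_3(\psi_h,\psi_h)+\tilde{a}_2^h(p_h,p_h)-2b_2(p_h,\psi_h),
\end{align*}
and, integrating in time, $\tfrac12\mathcal{E}(t)+\int_0^t a_2^h(p_h,p_h)\,\mathrm{d}s=\tfrac12\mathcal{E}(0)+\int_0^t\bigl(F^h(\partial_t\bu_h)+G^h(p_h)\bigr)\,\mathrm{d}s$.

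The crux is to show that $\mathcal{E}(t)$ is a genuine and $\lambda$-robust energy, i.e.\ $\mathcal{E}(t)\gtrsim\mu\|\beps(\bu_h(t))\|_0^2+c_0\|p_h(t)\|_0^2$ with constants independent of $\lambda$ and $h$. The key is that, by \eqref{bfb2}, $b_2(p_h,\psi_h)$ only involves $\Pi^0_K p_h$ element-wise; Young's inequality with the natural weights gives $2b_2(p_h,\psi_h)\le\tfrac1\lambda\|\psi_h\|_0^2+\tfrac{\alpha^2}\lambda\sum_{K}\|\Pi^0_K p_h\|_{0,K}^2$, whose first contribution equals $a_3(\psi_h,\psi_h)$ and cancels. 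Using then the definition of $\tilde{a}_2^h$ together with the local stability \eqref{bound:stab} and the $L^2(K)$-orthogonality of $\Pi^0_K$, one obtains element-wise $\tilde{a}_2^h(p_h,p_h)|_K-\tfrac{\alpha^2}\lambda\|\Pi^0_K p_h\|_{0,K}^2\ge c_0\|\Pi^0_K p_h\|_{0,K}^2+\tilde{\zeta}_* c_0\|(I-\Pi^0_K)p_h\|_{0,K}^2\ge\min\{1,\tilde{\zeta}_*\}\,c_0\|p_h\|_{0,K}^2$, hence, with the coercivity of $a_1^h$, $\mathcal{E}(t)\ge\min\{1,\alpha_*\}\,2\mu\|\beps(\bu_h)\|_0^2+\min\{1,\tilde{\zeta}_*\}\,c_0\|p_h\|_0^2$. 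Running the same splitting in the opposite direction and using the boundedness of the discrete forms, $\tfrac12\mathcal{E}(0)$ is bounded by the initial-data terms on the first line of the right-hand side of \eqref{bound:semi-stability} (which vanish for the quiescent initial data of the model).

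Since $\|\psi_h(t)\|_0^2$ disappears from $\mathcal{E}(t)$ after that cancellation, I would recover it separately from the discrete inf-sup condition \eqref{discr-infsup}: from \eqref{weak-uh} one has $b_1(\bv_h,\psi_h)=F^h(\bv_h)-a_1^h(\bu_h,\bv_h)$, so $\|\psi_h(t)\|_0\le\tfrac1{\tilde{\beta}}\sup_{\bv_h\neq\cero}\tfrac{b_1(\bv_h,\psi_h)}{\|\bv_h\|_1}\lesssim\|\bb(t)\|_0+\|\beps(\bu_h(t))\|_0$, with constants depending only on $\tilde{\beta}$, $\mu$ and the Korn/Poincaré constants, hence on neither $\lambda$ nor $h$. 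Combined with the coercivity of $\mathcal{E}(t)$ and with $a_2^h(q_h,q_h)\ge\min\{1,\zeta_*\}\tfrac{\kappa_{\min}}\eta\|\nabla q_h\|_0^2$, this recovers precisely the left-hand side of \eqref{bound:semi-stability} from the integrated identity (the extra $\|\bb(t)\|_0^2$ being absorbed into $\sup_{[0,t_{\mathrm{final}}]}\|\bb(t)\|_0^2$).

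For the right-hand side I would integrate $F^h(\partial_t\bu_h)$ by parts in time, $\int_0^t F^h(\partial_t\bu_h)\,\mathrm{d}s=F^h(\bu_h)(t)-F^h(\bu_h)(0)-\int_0^t(\partial_t F^h)(\bu_h)\,\mathrm{d}s$, with $\partial_t F^h$ defined as $F^h$ with $\partial_t\bb$ in place of $\bb$, then bound $|F^h(\bv_h)|\le\rho\|\bb\|_0\|\bv_h\|_0\le C\|\bb\|_0\|\beps(\bv_h)\|_0$ (Korn and Poincaré on $\bV$), absorbing the $\|\beps(\bu_h(t))\|_0^2$ term into $\tfrac12\mathcal{E}(t)$ via Young and sending $\|\beps(\bu_h(0))\|_0^2$, $\|\bb(0)\|_0^2$ and $\int_0^t\|\partial_t\bb\|_0^2\,\mathrm{d}s$ to the data side; and bound $\int_0^t G^h(p_h)\,\mathrm{d}s\le\int_0^t\|\ell\|_0\|p_h\|_0\,\mathrm{d}s$ using Poincaré on $Q$ and Young, absorbing the $\tfrac{\kappa_{\min}}\eta\|\nabla p_h\|_0^2$ term into $\int_0^t a_2^h(p_h,p_h)\,\mathrm{d}s$ and sending $\int_0^t\|\ell\|_0^2\,\mathrm{d}s$ to the data side. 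The leftover $\tfrac12\int_0^t\|\beps(\bu_h(s))\|_0^2\,\mathrm{d}s\lesssim\int_0^t\mathcal{E}(s)\,\mathrm{d}s$ is then closed by Grönwall's inequality (equivalently, by taking a supremum in $t$ and using Cauchy--Schwarz in time on $\int_0^t\|\partial_t\bb\|_0\,\mathrm{d}s$), with a final constant depending on $t_{\mathrm{final}}$ but not on $\lambda$ or $h$; collecting everything yields \eqref{bound:semi-stability}.
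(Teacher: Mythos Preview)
Your proposal is correct and follows essentially the same approach as the paper: the same choice of test functions ($\bv_h=\partial_t\bu_h$, $q_h=p_h$, $\phi_h=\pm\psi_h$ in the time-differentiated constraint), the same completion-of-squares mechanism to make the $\lambda^{-1}$-terms sign-definite, the same recovery of $\|\psi_h\|_0$ via the discrete inf-sup condition \eqref{discr-infsup}, integration by parts in time for $F^h(\partial_t\bu_h)$, and Gr\"onwall. The only difference is organisational: you package the quadratic part into a single energy $\mathcal{E}(t)$ and then prove its $\lambda$-robust coercivity via Young's inequality on $2b_2(p_h,\psi_h)$, whereas the paper expands $\tilde{a}_2^h$, $a_3$ and $b_2$ using $\Pi^0_K$ and explicitly reassembles the cross terms into $\tfrac{1}{\lambda}\sum_K\|\alpha\Pi^0_K p_h-\psi_h\|_{0,K}^2$ before integrating in time; the two are algebraically equivalent.
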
	
	\begin{proof}
		Following Ref. \cite{lee19}, we can differentiate equation \eqref{weak-psih} with respect to time and choose
		as test function $\phi_h= -\psi_h$. We get
		\begin{align*}
		-b_1(\partial_t \bu_h, \psi_h) - b_2(\partial_t p_h, \psi_h) + a_3(\partial_t \psi_h, \psi_h) = 0.
		\end{align*}
		Then we take $q_h = p_h$ in \eqref{weak-ph}, $\bv_h = \partial_t \bu_h$ in \eqref{weak-uh} and add the result to the previous relation to obtain
		\begin{align*}
		& a_1^h(\bu_h, \partial_t \bu_h) + b_1(\partial_t \bu_h, \psi_h) + \tilde{a}_2^h(\partial_t p_h, p_h) + a_2^h(p_h, p_h) - b_2(p_h, \partial_t \psi_h)\\ &  - b_1(\partial_t \bu_h, \psi_h)
		- b_2(\partial_t p_h, \psi_h) + a_3(\partial_t \psi_h, \psi_h) = F^h(\partial_t \bu_h) + G^h(p_h).
		\end{align*}
		Using the stability of the bilinear forms $a_1^h(\cdot,\cdot)$, $a_2^h(\cdot, \cdot)$, $ \tilde{a}_2^h(\cdot, \cdot)$ as well as the definition
		of the discrete bilinear forms $b_1(\cdot, \cdot)$ (cf. \eqref{bfb2}) and
		$ \tilde{a}_2^h(\cdot, \cdot)$, we readily have
		\begin{align} \label{semi:stab}
		& \frac{\mu}{2} \frac{\mathrm{d}}{\mathrm{d}t} \| \beps (\bu_h) \|_0^2 + \frac{c_0}{2} \frac{\mathrm{d}}{\mathrm{d}t} \| p_h \|_0^2 + \frac{\kappa_{\min}}{\eta} \| \nabla p_h\|_0^2 + \frac{1}{\lambda} \|\psi_h\|_{0,K}^2 \nonumber \\
		& + \sum_K \bigg(  \frac{\alpha^2}{\lambda}  \Big(  \bigl(\partial_t(\Pi^0_K p_h), \Pi^0_K p_h \bigr)_{0,K} + S^K_0
		\bigl((I-\Pi^0_K)\partial_t p_h,  (I-\Pi^0_K) p_h \bigr) \Big) \nonumber\\
		& \qquad \qquad - \frac{\alpha}{\lambda} \Big(  (\Pi^0_K p_h, \partial_t \psi_h)_{0,K} +
		\bigl(\partial_t (\Pi^0_K p_h), \psi_h \bigr)_{0,K} \Big) \biggr)
		\\
		& \lesssim F^h(\partial_t \bu_h) + G^h(p_h). \nonumber
		\end{align}
		Rearranging terms  on the left-hand side gives
		\begin{align*}
		& \frac{\mu}{2} \frac{\mathrm{d}}{\mathrm{d}t} \| \beps (\bu_h) \|_0^2 + \frac{\kappa_{\min}}{\eta} \| \nabla p_h\|_0^2 + \frac{c_0}{2} \frac{\mathrm{d}}{\mathrm{d}t} \| p_h \|_0^2 \\
		& + \frac{1}{\lambda} \sum_K \bigg( \bigl(\partial_t(\alpha \Pi^0_K p_h - \psi_h), (\alpha \Pi^0_K p_h - \psi_h)\bigr)_{0,K}  \\
		& \qquad \qquad + \frac{\alpha^2}{2} \frac{\mathrm{d}}{\mathrm{d}t} S^K_0
		\bigl((I-\Pi^0_K) p_h, (I-\Pi^0_K) p_h \bigr) \bigg) \lesssim F^h(\partial_t \bu_h) + G^h(p_h),
		\end{align*}
		and after exploiting the stability of $S^K_0(\cdot, \cdot)$ and integrating from $0$ to $t$,
		we  arrive at
		\begin{align*}
		& \mu \| \beps (\bu_h (t))\|_0^2 + c_0 \| p_h(t)\|_0^2 +  \frac{\alpha^2}{\lambda} \sum_{K} \| (I-\Pi^0_K) p_h(t)\|_{0,K}^2 \\
		& \qquad \qquad + \frac{1}{\lambda} \sum_{K} \| (\alpha \Pi^0_K p_h - \psi_h)(t)\|_{0,K}^2  + \frac{\kappa_{\min}}{\eta} \int_0^t \| \nabla p_h(s) \|_0^2 \, \mathrm{d}s\\
		& \lesssim \mu \| \beps (\bu_h (0))\|_0^2 + c_0 \| p_h(0)\|_0^2 +  \frac{\alpha^2}{\lambda} \sum_{K} \| (I-\Pi^0_K) p_h(0)\|_{0,K}^2  \\
		& \qquad \qquad + \frac{1}{\lambda} \sum_{K} \| (\alpha \Pi^0_K p_h - \psi_h)(0)\|_{0,K}^2  \\
		& \qquad \qquad +  \underbrace{ \rho \int_0^t  \sum_K  \bigl(\bb(s), \bPi^{0,0}_K \partial_t \bu_h(s) \bigr)_{0,K}}_{=:T_1}
		+ \underbrace{\int_0^t  \sum_K  \bigl( \ell(s), \Pi^0_K p_h(s) \bigr)_{0,K} }_{=:T_2}.
		\end{align*}
		Then, integration by parts in time, and an application of  Korn, Poincar\'e, and Young inequalities, implies that
		\begin{align*}
		T_1 & =  \rho \sum_K \Bigl(  \bigl(\bb(t), \bPi^{0,0}_K  \bu_h(t) \bigr)_{0,K} - \bigl(\bb(0), \bPi^{0,0}   _K  \bu_h(0) \bigr)_{0,K} \Bigr) \\
		& \quad - \rho \int_0^t \sum_K  \bigl(\partial_t \bb(s), \bPi^{0,0}_K  \bu_h(s) \bigr)_{0,K} \, \mathrm{d}s \\
		&  \le \mu \|  \beps(\bu_h(t))\|_0^2 \\ & \quad +  C_1  \rho \left(\frac{\rho}{\mu} \| \bb(t)\|_0^2 +  \| \bb(0)\|_0 \| \beps(\bu_h(0))\|_0 + \int_0^t \| \partial_t \bb (s) \|_0 \| \beps(\bu_h(s))\|_0 \, \mathrm{d}s \right).
		\end{align*}
		The bound for  $T_2$ follows from the Cauchy-Schwarz, Poincar\'e, and Young inequalities in the following manner:
		\begin{align*}
		T_2 & = \int_0^t  \sum_K ( \ell (s), \Pi^0_K p_h(s))_{0,K} \, \mathrm{d}s  \\
		& \lesssim \int_0^t \| \ell(s)\|_0 \| p_h(s) \|_0 \, \mathrm{d}s   \le  C_2\frac{\eta}{\kappa_{\min}} \int_0^t \| \ell(s)\|_0^2 \, \mathrm{d}s   + \frac{\kappa_{\min}}{2\eta} \int_0^t \| \nabla p_h(s) \|_0^2 \, \mathrm{d}s  .
		\end{align*}
		Thus, we achieve
		\begin{align} \label{bound:semi_u-p}
		\begin{split}
		& \mu \| \beps (\bu_h (t))\|_0^2 + c_0 \| p_h(t)\|_0^2 + \frac{\alpha^2}{\lambda}\sum_{K} \| (I-\Pi^0_K) p_h(t)\|_{0,K}^2 \\
		& \qquad \qquad + \frac{1}{\lambda} \sum_{K} \| (\alpha \Pi^0_K p_h - \psi_h)(t)\|_{0,K}^2  + \frac{\kappa_{\min}}{2\eta} \int_0^t \| \nabla p_h(s) \|_0^2 \, \mathrm{d}s   \\
		& \lesssim \mu \| \beps (\bu_h (0))\|_0^2 + c_0 \| p_h(0)\|_0^2 + \frac{\alpha^2}{\lambda}\sum_{K} \| (I-\Pi^0_K) p_h(0)\|_{0,K}^2  \\
		& \qquad \qquad + \frac{1}{\lambda} \sum_{K} \| (\alpha \Pi^0_K p_h - \psi_h)(0)\|_{0,K}^2 + C \bigg( \int_0^t \| \ell(s)\|_0^2 \, \mathrm{d}s + \Big(\| \bb(t)\|_0^2  \\
		& \qquad \qquad \qquad     + \| \bb(0)\|_0 \| \beps(\bu_h(0))\|_0 + \int_0^t \| \partial_t \bb (s) \|_0 \| \beps(\bu_h(s))\|_0 \, \mathrm{d}s \Big)  \bigg).
		\end{split}
		\end{align}
		The discrete inf-sup condition \eqref{discr-infsup}
		alongwith \eqref{weak-uh} gives
		\begin{align} \label{bound:semi_psi}
		\| \psi_h \|_0 \le \sup_{\bv_h (\neq 0) \in \bV_h} \frac{1}{\| \bv_h \|_1} \big( F^h(\bv_h) - a_1^h(\bu_h, \bv_h) \big) \le C (\| \bb\|_0 + \| \beps(\bu_h)\|_0 ).
		\end{align}
	    Now, Young's and Gronwall's inequalities together with
	    \eqref{bound:semi_u-p}-\eqref{bound:semi_psi} concludes
	    the proof of the bound \eqref{bound:semi-stability}.
	\end{proof}
	
	\begin{corollary}[Solvability of the discrete problem]
		The problem \eqref{weak-uh}-\eqref{weak-psih} has a unique solution in $\bV_h \times Q_h \times Z_h$ for each $t \in (0,t_{\mathrm{final}}]$.
	\end{corollary}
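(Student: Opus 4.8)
The plan is the following. After fixing bases of the finite-dimensional spaces $\bV_h$, $Q_h$, $Z_h$, system \eqref{weak-uh}--\eqref{weak-psih} becomes a linear differential--algebraic system for the coefficient vectors of $(\bu_h,p_h,\psi_h)$; I would reduce it to an explicit linear system of ODEs for $p_h$ alone, invoke the Carath\'eodory existence and uniqueness theorem to obtain a unique local solution, and then use the \emph{a priori} bound \eqref{bound:semi-stability} to show the solution is defined on all of $(0,t_{\mathrm{final}}]$; uniqueness will also follow from \eqref{bound:semi-stability}.

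\emph{Reduction.} Since each $\phi_h\in Z_h$ is piecewise constant and $\vdiv\bv_h\in Z_h$ for $\bv_h\in\bV_h$, equation \eqref{weak-psih} holds elementwise and yields the pointwise identity $\psi_h|_K=\alpha\,\Pi^{0,0}_K p_h-\lambda\,\vdiv\bu_h|_K$, so $\psi_h$ is eliminated. Substituting this into \eqref{weak-uh} recasts that equation as $a_1^h(\bu_h,\bv_h)+\lambda\sum_K(\vdiv\bu_h,\vdiv\bv_h)_{0,K}=F^h(\bv_h)+\alpha\sum_K(\Pi^{0,0}_K p_h,\vdiv\bv_h)_{0,K}$ for all $\bv_h\in\bV_h$, whose left-hand side is coercive on $\bV_h$ (by the coercivity of $a_1^h$ stated after \eqref{bound:stab} together with Korn's inequality); hence $\bu_h$ is an affine function of $p_h$ and of the load $\bb(t)$, and $\partial_t\bu_h$ an affine function of $\partial_t p_h$ and $\partial_t\bb$. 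Inserting $\partial_t\psi_h|_K=\alpha\,\Pi^{0,0}_K\partial_t p_h-\lambda\,\vdiv\partial_t\bu_h$ into \eqref{weak-ph} then leaves a single closed ODE $\mathcal N\,\partial_t p_h+A_2\,p_h=\bg(t)$ for $p_h$, where $A_2$ is the matrix of $a_2^h$, $\bg(t)$ collects $G^h$ and a term involving $\partial_t\bb$, and $\mathcal N$ is the matrix of the reduced mass form: writing $\bz_h\in\bV_h$ for the (unique, by the coercivity just mentioned) solution of $a_1^h(\bz_h,\bv_h)+\lambda\sum_K(\vdiv\bz_h,\vdiv\bv_h)_{0,K}=\alpha\sum_K(\Pi^{0,0}_K q_h,\vdiv\bv_h)_{0,K}$, the quadratic form of $\mathcal N$ is $\mathcal N(q_h,q_h)=\tilde{a}_2^h(q_h,q_h)-b_2\bigl(q_h,\alpha\,\Pi^{0,0}_h q_h-\lambda\,\vdiv\bz_h\bigr)$, where $\Pi^{0,0}_h$, $\Pi^{0}_h$ act elementwise as $\Pi^{0,0}_K$, $\Pi^{0}_K$.

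\emph{Invertibility of $\mathcal N$.} This is the only delicate step. Testing the equation defining $\bz_h$ with $\bv_h=\bz_h$ gives $\alpha\sum_K(\Pi^{0,0}_K q_h,\vdiv\bz_h)_{0,K}=a_1^h(\bz_h,\bz_h)+\lambda\|\vdiv\bz_h\|_0^2$, and carrying out the same $\lambda^{-1}$-cancellation used in the proof of the stability theorem one obtains
\[
\mathcal N(q_h,q_h)=\tilde{a}_2^h(q_h,q_h)-\tfrac{\alpha^2}{\lambda}\,\|\Pi^{0,0}_h q_h\|_0^2+a_1^h(\bz_h,\bz_h)+\lambda\,\|\vdiv\bz_h\|_0^2 .
\]
Since $\|\Pi^{0,0}_h q_h\|_0\le\|\Pi^{0}_h q_h\|_0$ and the stabilisation part of $\tilde{a}_2^h$ is nonnegative, $\tilde{a}_2^h(q_h,q_h)\ge(c_0+\tfrac{\alpha^2}{\lambda})\|\Pi^{0,0}_h q_h\|_0^2$, whence $\mathcal N(q_h,q_h)\ge c_0\|\Pi^{0,0}_h q_h\|_0^2+a_1^h(\bz_h,\bz_h)+\lambda\|\vdiv\bz_h\|_0^2\ge0$. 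If $\mathcal N(q_h,q_h)=0$, then $a_1^h(\bz_h,\bz_h)=0$, so $\bz_h=\cero$, which forces $\sum_K(\Pi^{0,0}_K q_h,\vdiv\bv_h)_{0,K}=0$ for all $\bv_h\in\bV_h$; by the discrete inf-sup condition \eqref{discr-infsup} this gives $\Pi^{0,0}_h q_h=0$, and then $\tilde{a}_2^h(q_h,q_h)=0$ forces $q_h=0$ by coercivity of $\tilde{a}_2^h$. Hence $\mathcal N$ is symmetric and positive definite, in particular invertible.

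\emph{Conclusion.} The reduced problem is then the explicit linear system $\partial_t p_h=\mathcal N^{-1}\bigl(\bg(t)-A_2\,p_h\bigr)$; with the temporal regularity of $\bb,\ell$ already assumed for \eqref{bound:semi-stability}, the Carath\'eodory theory for linear ODE systems provides a unique $p_h$ meeting the prescribed initial value, and $\bu_h,\psi_h$ are recovered from the algebraic relations above. The bound \eqref{bound:semi-stability} then rules out finite-time blow-up, so the solution exists on all of $(0,t_{\mathrm{final}}]$. Uniqueness also follows directly from \eqref{bound:semi-stability}: the difference of two solutions solves \eqref{weak-uh}--\eqref{weak-psih} with vanishing data and zero initial values, so the whole right-hand side of \eqref{bound:semi-stability} vanishes and the difference is zero. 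I expect the genuine difficulty to be exactly the positive definiteness of $\mathcal N$: \emph{a priori} it appears as the difference of a positive term and the elastic energy of the displacement reconstructed from $q_h$, and its non-degeneracy rests on the poroelastic cancellation of the $\lambda^{-1}$ contributions together with, in the limiting case $c_0=0$, the discrete inf-sup stability \eqref{discr-infsup}.
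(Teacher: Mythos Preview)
Your argument is correct and takes a genuinely different route from the paper's proof. The paper argues abstractly via a Fredholm-type splitting: it writes \eqref{weak-uh}--\eqref{weak-psih} as $(\mathcal{A}^h+\mathcal{B}^h)\vec{\bu}_h=\mathcal{F}^h$, observes that $\mathcal{A}^h$ decouples into a well-posed perturbed saddle-point problem for $(\bu_h,\psi_h)$ and a well-posed parabolic problem for $p_h$ (hence is invertible), and that $\mathcal{B}^h$, built from the off-diagonal couplings $b_2$, is compact; unique solvability then follows from the stability bound \eqref{bound:semi-stability}. Your approach instead performs an explicit algebraic reduction: you eliminate $\psi_h$ via \eqref{weak-psih}, solve \eqref{weak-uh} for $\bu_h$ in terms of $p_h$, and arrive at a single linear ODE $\mathcal{N}\partial_t p_h+A_2 p_h=\bg(t)$ in $Q_h$, for which you verify directly that the reduced mass form $\mathcal{N}$ is symmetric positive definite.

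What each approach buys: the paper's operator-theoretic framing is concise and highlights the structural analogy with the continuous Fredholm analysis of Ref.~\cite{oyarzua16}, but in finite dimensions the invocation of compactness is somewhat formal, and the treatment of the $\partial_t\psi_h$ coupling inside $\mathcal{B}^h$ is brief. Your reduction is more elementary and entirely self-contained; it makes transparent exactly why the coupling does not spoil solvability, namely because the poroelastic cancellation leaves $\mathcal{N}(q_h,q_h)=\tilde{a}_2^h(q_h,q_h)-\tfrac{\alpha^2}{\lambda}\|\Pi^{0,0}_h q_h\|_0^2+a_1^h(\bz_h,\bz_h)+\lambda\|\vdiv\bz_h\|_0^2\ge0$, with equality only at $q_h=0$ thanks to the discrete inf-sup condition \eqref{discr-infsup} and the coercivity of $\tilde{a}_2^h$. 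In particular, your argument works cleanly even when $c_0=0$, which is the delicate case. The price is that the reduction is specific to this three-field structure, whereas the Fredholm viewpoint would adapt more readily to related coupled systems.
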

	\begin{proof}
		Analogously to the Fredholm alternative approach exploited in Ref.~\cite{oyarzua16}, one can consider \eqref{weak-uh}--\eqref{weak-psih} as the operator problem of finding $\vec{\bu}_h(t):= (\bu_h(t), p_h(t), \psi_h(t))$ such that
		$$
		(\mathcal{A}^h + \mathcal{B}^h) \vec{\bu}_h(t) = \mathcal{F}^h,
		$$
		where
		\begin{align*}
		\langle \mathcal{A}^h (\vec{\bu}_h), \vec{\bv}_h\rangle & := a_1^h(\bu_h, \bv_h) + \tilde{a}_2^h(\partial_t p_h, q_h) + a_2^h(p_h, q_h) + a_3(\psi_h, \phi_h) \\& \quad + b_1(\bv_h, \psi_h)- b_1(\bu_h, \phi_h), \\
		\langle \mathcal{B}^h (\vec{\bu}_h), \vec{\bv}_h\rangle &:= -b_2(q_h, \partial_t \psi_h) - b_2(p_h, \phi_h).
		\end{align*}
		Note that one can regard
		the problem $\mathcal{A}^h \vec{\bu}_h = \mathcal{L}^h$ for given $\mathcal{L}^h=(L_1^h,L_2^h,L_3^h) \in (\bV_h \times Q_h \times Z_h)'$ as a combination of the perturbed saddle-point problem
		\begin{align*}
		& \text{For every $t \in (0, t_{\text{final}}]$, find $(\bu_h(t),\psi_h(t)) \in \bV_h \times Z_h$ such that} \\
		& a_1^h(\bu_h, \bv_h) + b_1(\bv_h, \psi_h) = L_1^h(\bv_h) \quad \text{for all $\bv_h \in \bV_h$,}  \\
		& b_1(\bu_h, \phi_h) - a_3(\psi_h, \phi_h) = L_3^h(\phi_h) \quad \text{for all $\phi_h \in Z_h$}
		\end{align*}
		and the parabolic problem
		\begin{align*}
		& \text{For each $t \in (0, t_{\text{final}}]$, find $p_h(t) \in Q_h$ such that} \\
		& \tilde{a}_2^h(\partial_t p_h, q_h) + a_2^h(p_h, q_h) = L_2^h(q_h) \quad \text{for all $q_h \in Q_h$.}
		\end{align*}
		Classical saddle-point theory\cite{boffi13} and the theory of parabolic problems\cite{ladyzenskaja68} then imply  the invertibility of the operator
		$\mathcal{A}^h$. On the other hand,
		noting that $\partial_t \psi_h \in L^2((0,t_{\text{final}}],Z_h)$ and that the operator induced by $b_2(\cdot,\cdot)$ from $\bV_h$ to $Z_h$ is compact (and so is its adjoint), we obtain that  the operator~$\mathcal{B}^h$ is compact for a given $t \in (0, t_{\text{final}}]$. Hence the unique solvability is obtained by invoking the stability result~\eqref{bound:semi-stability}.
	\end{proof}

	\noindent Next, we discretise in time using the backward Euler method with the constant step size $\Delta t = t_{\mathrm{final}} / N$ and denote any function $f$ at $t=t_n$ by $f^n$. The fully discrete scheme reads:
	\begin{subequations}  \label{fd-scheme}
		\begin{align}
		& \text{Given $\bu_h^0$, $p_h^0$, $\psi_h^0$, and for $t_n=n\Delta t$, $n=1, \dots, N$, find $\bu_h^{n} \in \bV_h$,}  \nonumber \\
		& \text{$p_h^{n} \in Q_h$ and $\psi_h^{n} \in Z_h$ such that for all $\bv_h \in \bV_h$, $q_h \in Q_h$ and $\phi_h \in Z_h$} \nonumber  \\
		&	a_1^h(\bu_h^{n},\bv_h)  + b_1(\bv_h,\psi_h^{n})     = F^{h,n}(\bv_h), \label{weak-uhn}\\
		&	\tilde{a}_2^h \left( p_h^{n} , q_h \right) + \Delta t a_2^h( p_h^{n},q_h) - b_2 \left( q_h, \psi_h^{n} \right)  \nonumber
		\\ &  = \Delta t G^{h,n} (q_h)+\tilde{a}_2^h \left(  p_h^{n-1} , q_h \right) -b_2 \left( q_h, \psi_h^{n-1} \right), \label{weak-phn}\\
		&	b_1(\bu_h^{n},\phi_h)  + b_2(p_h^{n},\phi_h) - a_3(\psi_h^{n},\phi_h)  = 0, \label{weak-psihn}
		\end{align}
	\end{subequations} 	
	where 	for all $\bv_h \in \bV_h$ and $q_h \in Q_h$ we define
	\begin{align*}
	F^{h,n}(\bv_h)|_K:= \rho\int_K  \bb_h(t^n) \cdot \bv_h, \quad
	G^{h,n}(q_h)|_K:= \int_K \ell_h(t^n) q_h.
	\end{align*}
	
	\begin{theorem}[Stability of the fully-discrete problem]
		The unique solution to problem \eqref{fd-scheme} depends   continuously  on data.
		Precisely, there exists a constant~$C$ independent of $\lambda, h, \Delta t$ such that
		\begin{align} \label{stab-ineq}  \begin{split}
		& \mu \| \beps (\bu_h^n) \|_0^2 + \| \psi_h^n \|_0^2 + c_0 \| p_h^n \|_0^2 + (\Delta t)\frac{\kappa_{\text{min}}}{\eta} \sum_{j=1}^n \| \nabla p_h^j \|_0^2 \\
		& \le C \bigg( \| \beps (\bu_h^0)\|_0^2 + \| p_h^0 \|_0^2 + \|\psi_h^0 \|_0^2 + \max_{0 \le j \le n} \| \bb^j \|_0^2 \\
		& \qquad \quad  + (\Delta t) \sum_{j=1}^n \Big( \| \partial_{t} \bb^j \|_0^2+ \|\ell^j\|_0^2 \Big) + (\Delta t)^2 \int_{0}^{T} \| \partial_{tt} \bb(s) \|_0^2 \, \mathrm{d}s \bigg).
		\end{split}
		\end{align}
		with $\bb^k:=\bb(\cdot,t^k)$ and $\ell^k:=\ell(\cdot,t^k)$, for $k=1,\ldots,n$.
	\end{theorem}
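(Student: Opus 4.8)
The plan is to mimic the semi-discrete stability argument at the discrete-in-time level, using the backward Euler scheme \eqref{fd-scheme} in place of the differentiated continuous equations, and replacing time differentiation by the discrete difference quotient $\partial_t f^n := (f^n - f^{n-1})/\Delta t$. First I would take the third equation \eqref{weak-psihn} at time levels $n$ and $n-1$, subtract, and divide by $\Delta t$ to obtain a discrete analogue of the differentiated constraint; then test it with $\phi_h = -\psi_h^n$. Next, test \eqref{weak-phn} with $q_h = p_h^n$ (after dividing by $\Delta t$) and test \eqref{weak-uhn} with $\bv_h = \partial_t \bu_h^n = (\bu_h^n - \bu_h^{n-1})/\Delta t$, using that \eqref{weak-uhn} holds also at level $n-1$ so the difference makes sense. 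Adding the three relations cancels the $b_1(\partial_t \bu_h^n, \psi_h^n)$ cross terms exactly as in the semi-discrete proof, and the $b_2$ terms combine into the difference quotient of $\|\alpha\Pi^0_K p_h^n - \psi_h^n\|_{0,K}^2$ modulo stabilisation contributions.

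The key algebraic device is the polarisation identity $(\partial_t f^n, f^n) = \tfrac12 \partial_t \|f^n\|_0^2 + \tfrac{\Delta t}{2}\|\partial_t f^n\|_0^2$, which turns the implicit terms into telescoping differences plus nonnegative remainders; since those remainders only help, they may be discarded. After using coercivity of $a_1^h$, $a_2^h$, $\tilde a_2^h$ and the stability bounds \eqref{bound:stab} on $S_0^K$, and then summing over $n$ (telescoping the $\partial_t(\cdot)$ terms), one arrives at a discrete counterpart of \eqref{bound:semi_u-p}:
\begin{align*}
& \mu \|\beps(\bu_h^n)\|_0^2 + c_0\|p_h^n\|_0^2 + \frac{1}{\lambda}\sum_K \|(\alpha\Pi^0_K p_h^n - \psi_h^n)\|_{0,K}^2 + \Delta t\,\frac{\kappa_{\min}}{\eta}\sum_{j=1}^n \|\nabla p_h^j\|_0^2 \\
& \lesssim (\text{initial data}) + \Delta t\sum_{j=1}^n \bigl( F^{h,j}(\partial_t\bu_h^j) + G^{h,j}(p_h^j) \bigr).
\end{align*}
The source term $\Delta t\sum_j G^{h,j}(p_h^j)$ is absorbed via Cauchy--Schwarz, Poincaré and Young against the dissipation $\Delta t\sum_j \|\nabla p_h^j\|_0^2$, exactly as for $T_2$ in the previous theorem.

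The main obstacle — as in the semi-discrete case — is the term $\Delta t\sum_{j=1}^n F^{h,j}(\partial_t\bu_h^j) = \rho\,\Delta t\sum_j (\bb_h^j, \bPi^{0,0}_K\partial_t\bu_h^j)$, which involves the displacement velocity and must be handled by summation by parts (the discrete Abel summation), producing the endpoint terms $\rho(\bb^n,\bPi^{0,0}_K\bu_h^n) - \rho(\bb^0,\bPi^{0,0}_K\bu_h^0)$ plus $-\rho\,\Delta t\sum_j (\partial_t\bb^j, \bPi^{0,0}_K\bu_h^{j-1})$. The endpoint term at $n$ is absorbed into $\mu\|\beps(\bu_h^n)\|_0^2$ using Korn and Young; the sum over $j$ of the time differences $\partial_t\bb^j$ is where the consistency error appears, and one uses $\partial_t\bb^j = \frac{1}{\Delta t}\int_{t^{j-1}}^{t^j}\partial_s\bb(s)\,ds$ together with $\|\partial_t\bb^j\|_0^2 \le \frac{1}{\Delta t}\int_{t^{j-1}}^{t^j}\|\partial_s\bb\|_0^2$, so that $\Delta t\sum_j\|\partial_t\bb^j\|_0^2 \le \int_0^T\|\partial_s\bb\|_0^2\,ds$ — here a second-order Taylor expansion $\bb^j = \bb(t_j)$ versus the exact derivative generates the $(\Delta t)^2\int_0^T\|\partial_{tt}\bb\|_0^2$ term appearing on the right of \eqref{stab-ineq}. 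Finally, the bound on $\|\psi_h^n\|_0$ follows from the discrete inf-sup condition \eqref{discr-infsup} applied to \eqref{weak-uhn} exactly as in \eqref{bound:semi_psi}, and a discrete Gronwall inequality closes the estimate.
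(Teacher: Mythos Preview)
Your proposal is correct and follows essentially the same approach as the paper: the same choice of test functions (the paper uses $\bv_h = \bu_h^n - \bu_h^{n-1}$ rather than dividing by $\Delta t$, but this is immaterial), the same cancellation of the $b_1$ cross terms, the same recombination of $\tilde a_2^h$, $a_3$ and $b_2$ into the quantity $\|\alpha\Pi^0_K p_h^n - \psi_h^n\|_{0,K}^2$, the same telescoping via $(f^n-f^{n-1},f^n)\ge \tfrac12(\|f^n\|_0^2-\|f^{n-1}\|_0^2)$, the same Abel summation-by-parts on the forcing term with a Taylor remainder producing the $(\Delta t)^2\int_0^T\|\partial_{tt}\bb\|_0^2$ contribution, the same use of the discrete inf-sup condition for $\|\psi_h^n\|_0$, and a closing discrete Gronwall. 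One small clarification: when you test \eqref{weak-uhn} with $\bv_h=\partial_t\bu_h^n$ you do not need \eqref{weak-uhn} at level $n-1$; the equation at level $n$ alone suffices, since $\partial_t\bu_h^n\in\bV_h$ is an admissible test function.
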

	\begin{proof}
		Taking $\bv_h = \bu_h^n - \bu_h^{n-1}$ in \eqref{weak-uhn} gives
		\begin{align}\label{eq:un}
		a_1^h(\bu_h^n, \bu_h^n - \bu_h^{n-1}) +  b_1(\bu_h^n - \bu_h^{n-1}, \psi_h^n)= F^{h,n}(\bu_h^n - \bu_h^{n-1}).
		\end{align}
    A use of \eqref{weak-psih} for the time step $n$, $n-1$ and setting $\phi_h = -\psi_h^n$,  \eqref{weak-psihn} becomes
		\begin{align} \label{eq:psihn}
		-b_1(\bu_h^n - \bu_h^{n-1}, \psi_h^n) - b_2(p_h^n - p_h^{n-1}, \psi_h^n) + a_3(\psi_h^n - \psi_h^{n-1}, \psi_h^n) = 0.
		\end{align}
		Adding \eqref{eq:psihn} from \eqref{eq:un} we readily obtain
		\begin{align} \label{eq:u-psi} \begin{split}
		a_1^h(\bu_h^n, \bu_h^n-\bu_h^{n-1}) + a_3(\psi_h^n - \psi_h^{n-1}, \psi_h^n)-b_2(p_h^n - p_h^{n-1}, \psi_h^n) \\
		= F^{h,n}(\bu_h^n - \bu_h^{n-1}),
		\end{split}
		\end{align}
		and choosing $q_h = p_h^n$ in \eqref{weak-phn} implies the relation
		\begin{align}\label{eq:pn} \begin{split}
		\tilde{a}_2^h(p_h^n - p_h^{n-1},  p_h^n) + \Delta t \, a_2^h(p_h^n,  p_h^n) - b_2( p_h^n, \psi_h^n - \psi_h^{n-1}) = \Delta t \, G^{h,n}(p_h^n).
		\end{split}
		\end{align}
		Next we proceed to adding \eqref{eq:u-psi} and \eqref{eq:pn}, to get
		\begin{align} \label{eq:combined} \begin{split}
		& 	a_1^h(\bu_h^n, \bu_h^n-\bu_h^{n-1})   + \Delta t \,  a_2^h(p_h^n,  p_h^n) + a_3(\psi_h^n - \psi_h^{n-1}, \psi_h^n) \\
		&  \qquad \qquad + \tilde{a}_2^h(p_h^n - p_h^{n-1},  p_h^n) -b_2(p_h^n - p_h^{n-1}, \psi_h^n) - b_2( p_h^n, \psi_h^n - \psi_h^{n-1}) \\
		&  = F^{h,n}(\bu_h^n-\bu_h^{n-1}) + \Delta t \,  G^{h,n}(p_h^n). \end{split}
		\end{align}
		Repeating the similar argument (as to obtain \eqref{semi:stab}) used in the derivation of proof of stability of semi-discrete scheme together with the inequality
		\begin{align} \label{bound:discrete_ineq}
		(f_h^n - f_h^{n-1}, f_h^n) \ge \frac{1}{2} (\| f_h^n \|_0^2 - \| f_h^{n-1}\|_0^2),
		\end{align}
		for any discrete function $f_h^j,\, j=1, \dots, n$ we arrive at
		\begin{align*}
		& \frac{\mu}{2} (\| \beps (\bu_h^n) \|_0^2 - \| \beps (\bu_h^{n-1})\|_0^2) + (\Delta t)\frac{\kappa_{\text{min}}}{\eta} \| \nabla p_h^n \|_0^2 \\
		& \qquad + \frac{1}{2} \sum_{K} c_0 (\| \Pi^0_K p_h^n \|_{0,K}^2 - \| \Pi^0_K p_h^{n-1} \|_{0,K}^2) \\
		& \qquad + \frac{1}{2} \Big(c_0 + \frac{\alpha^2}{\lambda} \Big) \sum_{K}  (\|  (I - \Pi^0_K) p_h^n \|_{0,K}^2 - \|  (I - \Pi^0_K) p_h^{n-1} \|_{0,K}^2) \\
		& \qquad + \frac{1}{2 \lambda} \sum_{K} ( \| \alpha \Pi^0_K p_h^n - \psi_h^n \|_{0,K}^2 - \| \alpha \Pi^0_K p_h^{n-1} - \psi_h^{n-1} \|_{0,K}^2) \\
		& \lesssim (\Delta t) ( \rho (\bb_h^n, \delta_t \bu_h^n)_{0,\Omega} +  (\ell_h^n, p_h^n)_{0,\Omega}).
		\end{align*}
		where we have denoted $\delta_t f_h(t_n): = \frac{f_h(t_{n}) - f_h(t_{n-1})}{\Delta t}$ for any time-space discrete function $f_h$. Summing over $n$ we obtain
		\begin{align*}
		& \frac{\mu}{2} (\| \beps (\bu_h^n) \|_0^2 - \| \beps (\bu_h^0)\|_0^2) + (\Delta t)\frac{\kappa_{\text{min}}}{\eta} \sum_{j=1}^n \| \nabla p_h^j \|_0^2 \\
		& \qquad + \frac{1}{2} \sum_{K} c_0 (\| \Pi^0_K p_h^n \|_{0,K}^2 - \| \Pi^0_K p_h^0 \|_{0,K}^2) \\
		& \qquad + \frac{1}{2} \Big(c_0 + \frac{\alpha^2}{\lambda} \Big) \sum_{K}  (\|  (I - \Pi^0_K) p_h^n \|_{0,K}^2 - \|  (I - \Pi^0_K) p_h^0 \|_{0,K}^2) \\
		& \qquad + \frac{1}{2 \lambda} \sum_{K} ( \| \alpha \Pi^0_K p_h^n - \psi_h^n \|_{0,K}^2 - \| \alpha \Pi^0_K p_h^0- \psi_h^0 \|_{0,K}^2) \\
		& \lesssim  \underbrace{\rho (\Delta t) \sum_{j=1}^n (\bb_h^j, \delta_t \bu_h^j)_{0,\Omega}}_{=:J_1} +  \underbrace{(\Delta t) \sum_{j=1}^n (\ell_h^j, p_h^j)_{0,\Omega}}_{=:J_2}.
		\end{align*}
		Using the equality
		\begin{align}	\label{eq:discrete_equ}
		\sum_{j=1}^n (f_h^j - f_h^{j-1}, g_h^j )= (f_h^n, g_h^n) - (f_h^0, g_h^0)-\sum_{j=1}^n (f_h^{j-1}, g_h^j- g_h^{j-1}),
		\end{align} for any discrete functions $f_h^j, g_h^j$, $j=1, \dots, n$, alongwith the Taylor expansion, Cauchy Schwarz, Korn's inequality and generalised Young's inequality gives
		\begin{align*}
		J_1 & = \rho \Big( (\bb_h^n, \bu_h^n)_{0,\Omega} - (\bb_h^0, \bu_h^0)_{0,\Omega}-\sum_{j=1}^n (\bb_h^j - \bb_h^{j-1}, \bu_h^{j-1})_{0,\Omega} \Big) \\
		& = \rho \Big( (\bb_h^n, \bu_h^n)_{0,\Omega} - (\bb_h^0, \bu_h^0)_{0,\Omega}- (\Delta t) \sum_{j=1}^n ( \partial_{t} \bb_h^j, \bu_h^{j-1})_{0,\Omega} \\
		& \qquad \quad + \sum_{j=1}^n \Big( \int_{t_{j-1}}^{t_j} (s-t_{j-1}) \partial_{tt} \bb_h(s)\, \mathrm{d}s, \bu_h^{j-1} \Big)_{0,\Omega} \Big)
		\\
		& \le \mu \| \beps(\bu_h^0) \|_0^2 + \frac{\mu}{4} \| \beps(\bu_h^n) \|_0^2 + \mu (\Delta t) \sum_{j=0}^{n-1} \| \beps (\bu_h^{j}) \|_0^2 \\
		& \qquad \quad + C_1(\rho, \mu) \Big( \max_{0 \le j \le n} \| \bb^j \|_0^2 + (\Delta t) \sum_{j=1}^n \|  \partial_{t} \bb^j \|_0^2 + (\Delta t)^2 \int_{0}^{T} \| \partial_{tt} \bb(s) \|_0^2 \, \mathrm{d}s \Big).
		\end{align*}
		Again an application of Young's inequality gives
		\begin{align*}
		J_2 \le C_2(\eta, \kappa_{\min}) (\Delta t) \sum_{j=1}^n \| \ell^j \|_0^2 + (\Delta t) \frac{\kappa_{\min}}{2 \eta } \sum_{j=1}^n \| p_h^j \|_0^2.
		\end{align*}
		Bounds of $J_1$,  $J_2$ and $\Pi^0_K$ implies
		\begin{align} \label{bound:discete_u-p}
		& \mu \| \beps (\bu_h^n) \|_0^2 + c_0 \| p_h^n \|_0^2 + (\Delta t)\frac{\kappa_{\text{min}}}{\eta} \sum_{j=1}^n \| \nabla p_h^j \|_0^2 +  \Big(\frac{\alpha^2}{\lambda} \Big) \sum_{K} \|  (I - \Pi^0_K) p_h^n \|_{0,K}^2 \nonumber \\
		& \qquad + \frac{1}{\lambda} \sum_{K} \| \alpha \Pi^0_K p_h^n - \psi_h^n \|_{0,K}^2 \nonumber\\
		& \le \frac{\mu}{2} \| \beps( \bu_h^n) \|_0^2 + (\Delta t) \Big( \frac{\kappa_{\min}}{2 \eta } \sum_{j=1}^n \| p_h^j \|_0^2 + \mu \sum_{j=0}^{n-1} \| \beps(\bu_h^j) \|_0^2 \Big)\\
		& \qquad + C \Big(\| \beps (\bu_h^0)\|_0^2 + \| p_h^0 \|_0^2 + \|\psi_h^0 \|_0^2 \nonumber \\
		& \qquad \qquad+ \max_{0 \le j \le n} \| \bb^j \|_0^2 + (\Delta t) \sum_{j=1}^n \| \partial_{t} \bb^j \|_0^2 + (\Delta t)^2 \int_{0}^{T} \| \partial_{tt} \bb(s) \|_0^2 \, \mathrm{d}s \nonumber \\
		& \qquad \qquad + (\Delta t) \sum_{j=1}^n \|\ell^j\|_0^2 \Big). \nonumber
		\end{align}
		An application of \eqref{discr-infsup} together with \eqref{weak-uhn} yields
		\begin{align} \label{bound:discrete_psi}
		\|\psi_h^n \|_0 \le C (\|\bb^n\|_0 + \| \beps(\bu_h^n)\|_0).
		\end{align}
		Finally, the discrete Gronwall's inequality and \eqref{bound:discete_u-p}-\eqref{bound:discrete_psi} concludes \eqref{stab-ineq}.
	\end{proof}

	It is worth  pointing  out that the proof is particularly delicate since the stabilisation term requires
	a careful treatment in order to
	guarantee that the bounds remain independent of the stability constants of the bilinear form
	$\tilde{a}_2(\cdot, \cdot)$.

	\section{A priori error estimates} \label{sec:estimates}
		For the sake of error analysis, we require the high
	regularity: In particular, for any $t>0$, we consider that the displacement is $\bu(t) \in H^{2}(\Omega)$,
	the fluid pressure $ p(t)\in H^{2}(\Omega)$, and the total pressure
	$\psi(t) \in H^{1}(\Omega)$. We recall the
	estimate for the interpolant  $\bu_I \in \bV_h$ of $\bu$ and
	$p_I \in Q_h$ of $p$ (see Refs. \cite{antonietti14,CGPS,CMS2016,MRR2015}).
	\begin{lemma} \label{interpolant_u}
		There exist interpolants $\bu_I \in \bV_h$  and  $p_I \in Q_h$     of~$\bu$ and~$p$, respectively,
		such that
		\begin{align*}
		\| \bu - \bu_I \|_0 + h | \bu - \bu_I|_1 \leq C h^{2} |\bu|_2, \quad
		\| p - p_I \|_0 + h | p - p_I|_1 \leq C h^{2} |p|_2.
		\end{align*}
	\end{lemma}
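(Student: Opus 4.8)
The result is the standard virtual-element interpolation estimate, and the plan is to follow the pattern of Refs.~\cite{antonietti14,CGPS,CMS2016,MRR2015}: construct the interpolants element by element through the degrees of freedom, exploit polynomial reproduction, and close with a scaling / Bramble--Hilbert argument. Concretely, on each polygon $K \in \mathcal{T}_h$ I would define $\bu_I|_K \in \bV_h(K)$ as the unique element of the local virtual space whose degrees of freedom $(D_v1)$ and $(D_v2)$ agree with those of $\bu|_K$; this is well defined by the unisolvence recalled above, and it makes sense because $\bu(t)\in H^2(\Omega)\hookrightarrow C^0(\overline{K})$ in two dimensions, so vertex values and edge-normal moments of $\bu$ are meaningful. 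The local interpolant $p_I|_K\in Q_h(K)$ is defined analogously through $(D_q)$. Both local operators are linear and, crucially, reproduce $[\mathbb{P}_1(K)]^2$ and $\mathbb{P}_1(K)$ respectively, since those polynomial spaces are contained in $\bV_h(K)$ and $Q_h(K)$ and a polynomial is fixed by its degrees of freedom.

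Second, I would establish a local stability bound: for every $\bv \in [H^2(K)]^2$,
$$\|\bv_I\|_{0,K} \le C\bigl(\|\bv\|_{0,K} + h_K|\bv|_{1,K} + h_K^2|\bv|_{2,K}\bigr), \qquad |\bv_I|_{1,K} \le C\bigl(|\bv|_{1,K} + h_K|\bv|_{2,K}\bigr),$$
with $C$ depending only on the mesh-regularity constant $C_{\mathcal{T}}$. This is obtained by the familiar scaling argument: rescale $K$ to a configuration of unit diameter, bound each degree of freedom of $\bv$ (vertex values via $H^2\hookrightarrow C^0$, edge-normal moments via scaled trace inequalities on $\partial K$), use that the rescaled local virtual space is finite dimensional so that all norms on it are equivalent, and scale back. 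The mesh assumptions — star-shapedness with respect to a ball of radius $C_{\mathcal{T}}h_K$ and the uniform lower bound on edge lengths relative to $h_K$ — are exactly what keeps the constants uniform in $K$ and $h$.

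Third, I would combine polynomial reproduction with stability. Choosing an averaged Taylor polynomial $\pi_K\bu\in[\mathbb{P}_1(K)]^2$ with $\|\bu-\pi_K\bu\|_{0,K}+h_K|\bu-\pi_K\bu|_{1,K}+h_K^2|\bu-\pi_K\bu|_{2,K}\le C h_K^2|\bu|_2$, and writing $\bu-\bu_I = (\bu-\pi_K\bu)-(\bu-\pi_K\bu)_I$ (valid because the interpolant fixes $\pi_K\bu$), the triangle inequality together with the two stability bounds applied to $\bv=\bu-\pi_K\bu$ gives $\|\bu-\bu_I\|_{0,K}\le C h_K^2|\bu|_{2,K}$ and $|\bu-\bu_I|_{1,K}\le C h_K|\bu|_{2,K}$. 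Squaring, summing over $K\in\mathcal{T}_h$, and taking square roots yields the stated global estimate for $\bu$. The argument for $p_I$ is word for word the same, using $\mathbb{P}_1(K)\subset Q_h(K)$ and that the only type of degree of freedom $(D_q)$ is again a vertex value controlled by $H^2\hookrightarrow C^0$; the enhancement constraint in the definition of $Q_h(K)$ plays no role here beyond guaranteeing that $\Pi^0_K$ is computable elsewhere in the analysis.

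The single step where genuine care is required is the local stability estimate, namely controlling the edge-normal moment degrees of freedom of $\bV_h(K)$ by Sobolev norms \emph{uniformly} over the admissible polygon shapes. This rests on scaled trace inequalities on each edge of $\partial K$ and on the finite-dimensionality (hence norm-equivalence) of the reference virtual space, with the uniform lower bound on edge lengths in the mesh-regularity assumption preventing the trace constants from degenerating; everything else is routine polynomial approximation theory.
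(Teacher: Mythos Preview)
Your proposal is correct and follows precisely the standard virtual-element interpolation argument from the references the paper cites. Note that the paper itself does not supply a proof of this lemma; it simply states the estimate and refers to Refs.~\cite{antonietti14,CGPS,CMS2016,MRR2015}, so what you have written is in fact a fleshed-out version of the argument behind those citations rather than an alternative to anything in the paper.
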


We now introduce the poroelastic projection operator: given
	$(\bu,p,\psi)\in \bV\times Q \times Z$, find $I^h :=( I^h_{\bu} \bu, I^h_p p, I^h_{\psi} \psi)$
	$\in \bV_h \times Q_h \times Z_h$ such that
	\begin{alignat}{5}
	a_1^h(I_{\bu}^h \bu,\bv_h) &\;+&\; b_1(\bv_h, I_{\psi}^h \psi) & = &a_1(\bu,\bv_h)  &\;+&\; b_1(\bv_h, \psi) &\quad\text{for all $\bv_h \in \bV_h$,}  \label{weak-Iuh}\\
	& \;&\; b_1(I_{\bu}^h \bu, \phi_h)  &=& b_1(\bu, \phi_h)   & \;&\;  &\quad \text{for all $\phi_h \in Z_h$,}  \label{weak-Ipsih} \\
	& \;&\; a_2^h(I^h_p p,q_h)  &=& a_2(p,q_h)   & \;&\;  &\quad \text{for all $q_h \in Q_h$,}  \label{weak-Iph}
	\end{alignat}
	and we remark that $I^h$ is defined by the combination of the
	saddle-point problem \eqref{weak-Iuh}, \eqref{weak-Ipsih} and the
	elliptic problem \eqref{weak-Iph}; and hence, it is well-defined.
	\begin{theorem}[Estimates for the poroelastic  projection]
		Let $(\bu, p, \psi)$ and $( I^h_{\bu} \bu, I^h_p p, I^h_{\psi} \psi)$ be the unique
		solutions of \eqref{weak-uh}--\eqref{weak-psih} and \eqref{weak-Iuh}, \eqref{weak-Ipsih},
		respectively. Then the following estimates hold:
		\begin{align}
		\| \bu - I^h_{\bu} \bu \|_0 + h \| \bu - I^h_{\bu} \bu \|_1 &\le  C h^2 (|\bu|_2 + |\psi|_1), \label{estimate-Ihu}\\
		\| \psi - I^h_{\psi} \psi \|_0  &\le  C h (|\bu|_2 + |\psi|_1), \label{estimate-Ihpsi}\\
		\| p - I^h_p p \|_0 + h \| p - I^h_p p \|_1 &\le C h^2 |p|_2  \label{estimate-Ihp}.
		\end{align}
	\end{theorem}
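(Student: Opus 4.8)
The three estimates decouple into two independent pieces: the estimate \eqref{estimate-Ihp} for $I^h_p p$ comes purely from the elliptic problem \eqref{weak-Iph}, while \eqref{estimate-Ihu} and \eqref{estimate-Ihpsi} come from the perturbed saddle-point problem \eqref{weak-Iuh}--\eqref{weak-Ipsih}. I would treat them separately.

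For \eqref{estimate-Ihp} I would argue as in the standard VEM analysis for elliptic problems. Write $p - I^h_p p = (p - p_I) + (p_I - I^h_p p)$ with $p_I$ the interpolant of Lemma~\ref{interpolant_u}. The first term is controlled by Lemma~\ref{interpolant_u}. For $e_h := p_I - I^h_p p \in Q_h$, I would test \eqref{weak-Iph} with $q_h = e_h$, use coercivity of $a_2^h(\cdot,\cdot)$, and insert a piecewise polynomial $p_\pi$ so that the consistency of $a_2^h$ (it reproduces $a_2$ on $\mathbb{P}_1$) lets me replace $a_2^h(p_I - p_\pi, e_h)$ and $a_2(p - p_\pi, e_h)$ by interpolation/projection error terms that are $O(h)$ in the $H^1$-seminorm; the continuity bounds on $a_2^h$ and $a_2$ listed in the excerpt close the $H^1$-estimate. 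The $L^2$-estimate then follows by a duality (Aubin--Nitsche) argument, assuming $H^2$-elliptic regularity of the adjoint problem on $\Omega$; this is where the extra power of $h$ is gained. This part is routine.

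For \eqref{estimate-Ihu}--\eqref{estimate-Ihpsi} I would use the abstract theory of perturbed saddle-point problems (Brezzi--Babu\v{s}ka with the coercive-on-the-kernel form $a_1^h$ plus the penalty term $a_3$), exactly the structure already used in Ref.~\cite{oyarzua16} at the continuous level. The discrete problem \eqref{weak-Iuh}--\eqref{weak-Ipsih} is well-posed by the coercivity of $a_1^h$ and the discrete inf-sup condition \eqref{discr-infsup}. Subtract \eqref{weak-Iuh}--\eqref{weak-Ipsih} from the continuous equations \eqref{weak-u}, \eqref{weak-psi} (evaluated at the same $\bv_h,\phi_h$), and split the errors through the interpolants $\bu_I$ and a suitable projection $\psi_I$ of $\psi$ onto $Z_h$ (e.g.\ the $L^2$-projection, so that $b_1(\bv_h,\psi-\psi_I)=0$ on $Z_h$ by the definition of $\mathbb{B}(\partial K)$ and $\vdiv\bv_h\in\mathbb{P}_0(K)$; this is the Fortin-type property underlying \eqref{discr-infsup}). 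The resulting error equations for $(\bu_I - I^h_{\bu}\bu,\ \psi_I - I^h_\psi\psi)\in\bV_h\times Z_h$ have right-hand sides consisting of: the VEM consistency error of $a_1^h$ (controlled using the polynomial consistency of $a_1^h$ and the stabilisation bounds \eqref{bound:stab}, giving $O(h|\bu|_2)$), the interpolation error $\|\bu-\bu_I\|_1=O(h|\bu|_2)$, and the projection error $\|\psi-\psi_I\|_0=O(h|\psi|_1)$. Applying the discrete Brezzi stability estimate (whose constants are $\lambda$-robust because $a_3$ has the right sign and the inf-sup constant $\tilde\beta$ is $h$- and $\lambda$-independent) yields $\|\bu_I - I^h_{\bu}\bu\|_1 + \|\psi_I - I^h_\psi\psi\|_0 \le C h(|\bu|_2+|\psi|_1)$, and the triangle inequality gives the $H^1$-bound on $\bu$ in \eqref{estimate-Ihu} and the bound \eqref{estimate-Ihpsi}.

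The remaining step is the $L^2$-estimate for $\bu - I^h_{\bu}\bu$, i.e.\ the extra power of $h$ in \eqref{estimate-Ihu}, and this is the part I expect to be the main obstacle. I would run an Aubin--Nitsche duality argument: introduce the auxiliary (continuous) problem with datum $\bu - I^h_{\bu}\bu$ in the place of the body load, whose solution $(\bw,\rho,\chi)$ enjoys $H^2\times H^2\times H^1$ regularity by the assumed elliptic regularity of the poroelastic saddle-point operator on $\Omega$; then test that problem with the error and use the error equations together with the consistency and interpolation estimates already established, plus the fact that the $\psi$-component only enters at order $h|\psi|_1$. The delicate bookkeeping is that the VEM inconsistency of $a_1^h$ and the coupling term $b_1$ both produce terms that must be shown to be $O(h^2)$ rather than $O(h)$ after pairing with the $H^2$-regular dual solution, which requires carefully using the orthogonality properties of $\bPi^{\beps}_K$ and of the $L^2$-projection defining $\psi_I$; once this is done, collecting terms gives $\|\bu-I^h_{\bu}\bu\|_0 \le Ch^2(|\bu|_2+|\psi|_1)$, completing \eqref{estimate-Ihu}.
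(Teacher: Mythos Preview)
Your approach is correct and matches the paper's proof, which consists of a single sentence citing the known VEM error estimates for the Stokes problem (Ref.~\cite{antonietti14}) and for second-order elliptic problems (Ref.~\cite{daveiga-g13}). One minor correction: the system \eqref{weak-Iuh}--\eqref{weak-Ipsih} is \emph{not} a perturbed saddle-point problem---its second equation is simply $b_1(I^h_{\bu}\bu,\phi_h)=b_1(\bu,\phi_h)$ with no $a_3$ term---so it is exactly the Stokes projection of Ref.~\cite{antonietti14}, and the standard (unperturbed) Brezzi theory applies directly, which only simplifies the argument you outline.
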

	\begin{proof}
		The estimates available for discretisations of Stokes\cite{antonietti14} and elliptic problems\cite{daveiga-g13} conclude the statement.
	\end{proof}
	
	\begin{remark}
		Note that repeating the same arguments exploited in this and in the subsequent sections,
		it is possible to derive error estimates of order $h^s$. It suffices to assume that
		$\bu(t) \in H^{1+s}(\Omega)^2$,  $ p(t)\in H^{1+s}(\Omega)$, and
		$\psi(t) \in H^{s}(\Omega)$, for $0<s\le1$.
	\end{remark}
	
	\begin{theorem}[Semi-discrete energy error estimates]\label{th:semid}
		Let $(\bu(t),p(t),\psi(t)) \in \bV \times Q \times Z$
		and $(\bu_h(t),p_h(t),\psi_h(t)) \in \bV_h \times Q_h \times Z_h$ be the unique
		solutions to problems \eqref{weak-u}--\eqref{weak-psi} and \eqref{weak-uh}--\eqref{weak-psih},
		respectively. Then, the following bounds hold, with constants $C>0$ independent of~$h$, $\lambda$
		\begin{align*}
			& \mu \| \beps((\bu- \bu_h)(t)) \|_0^2  + \|(\psi - \psi_h)(t) \|_0^2  \\
			& \qquad \quad + \frac{\kappa_{\min}}{\eta} \int_0^t \| \nabla (p - p_h)(s) \|_0^2 \, \mathrm{d}s \,\le \, C\, h^2.
			\end{align*}
	\end{theorem}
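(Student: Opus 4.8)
The plan is to transfer the energy manipulation of the stability theorems to the error, routed through the poroelastic projection $I^h$ of \eqref{weak-Iuh}--\eqref{weak-Iph}. First I split every error into a projection part and a purely discrete part, writing
\[
\bbeta_{\bu}:=\bu-I^h_{\bu}\bu,\quad \btheta_{\bu}:=\bu_h-I^h_{\bu}\bu\in\bV_h,
\]
and analogously $\bbeta_p:=p-I^h_pp$, $\btheta_p:=p_h-I^h_pp\in Q_h$, $\bbeta_\psi:=\psi-I^h_\psi\psi$, $\btheta_\psi:=\psi_h-I^h_\psi\psi\in Z_h$, so that $\bu-\bu_h=\bbeta_{\bu}-\btheta_{\bu}$ and likewise for $p$ and $\psi$. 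The projection parts are already controlled by \eqref{estimate-Ihu}--\eqref{estimate-Ihp} and Lemma~\ref{interpolant_u}, and since $I^h$ is linear and independent of $t$ it commutes with $\partial_t$, so those same bounds apply to $\partial_t\bu$, $\partial_t p$, $\partial_t\psi$. Hence it only remains to estimate $\|\beps(\btheta_{\bu}(t))\|_0$, $\|\btheta_\psi(t)\|_0$ and $\int_0^t\|\nabla\btheta_p(s)\|_0^2\,\mathrm{d}s$; the statement follows by the triangle inequality.

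Second, I derive the error equations for $(\btheta_{\bu},\btheta_p,\btheta_\psi)$. Testing \eqref{weak-u}--\eqref{weak-psi} against discrete functions, subtracting \eqref{weak-uh}--\eqref{weak-psih}, and invoking the three defining relations of $I^h$ — \eqref{weak-Iuh} cancels $a_1(\bu,\cdot)+b_1(\cdot,\psi)$ against $F$, \eqref{weak-Ipsih} makes $b_1(\bbeta_{\bu},\cdot)$ vanish, and \eqref{weak-Iph} removes $a_2(p,\cdot)$ — one is left, for all $(\bv_h,q_h,\phi_h)\in\bV_h\times Q_h\times Z_h$, with
\begin{align*}
& a_1^h(\btheta_{\bu},\bv_h)+b_1(\bv_h,\btheta_\psi)=F^h(\bv_h)-F(\bv_h),\\
& \tilde{a}_2^h(\partial_t\btheta_p,q_h)+a_2^h(\btheta_p,q_h)-b_2(q_h,\partial_t\btheta_\psi)=\Lambda(q_h)-b_2(q_h,\partial_t\bbeta_\psi),\\
& b_1(\btheta_{\bu},\phi_h)+b_2(\btheta_p,\phi_h)-a_3(\btheta_\psi,\phi_h)=b_2(\bbeta_p,\phi_h)-a_3(\bbeta_\psi,\phi_h),
\end{align*}
with $\Lambda(q_h):=G^h(q_h)-G(q_h)+\tilde{a}_2(\partial_t p,q_h)-\tilde{a}_2^h(I^h_p\partial_t p,q_h)$ collecting the quadrature defect of $G$ and the inconsistency of $\tilde{a}_2^h$. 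All right-hand sides are of order $h^2$ in the relevant norms: $L^2$-orthogonality and first-order approximation of $\bPi^{0,0}_K$ and $\Pi^0_K$ give $|F^h(\bv_h)-F(\bv_h)|\le Ch^2|\bb|_1\|\bv_h\|_1$ and $|G^h(q_h)-G(q_h)|\le Ch^2|\ell|_1\|q_h\|_1$ (under the natural data regularity $\bb(t),\ell(t)\in H^1$); inserting the piecewise best $\mathbb{P}_1$-approximation of $\partial_t p$, using that $S_0^K$ and $\tilde{a}_2^h$ are consistent on $\mathbb{P}_1(K)$, and invoking \eqref{estimate-Ihp} for $\partial_t p$, bounds the $\tilde{a}_2^h$-inconsistency part of $\Lambda$ by $Ch^2|\partial_t p|_2\|q_h\|_0$; and $\|\partial_t\bbeta_\psi\|_0,\|\bbeta_p\|_0,\|\bbeta_\psi\|_0$ are of order $h$ by the projection estimates.

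Third, I run exactly the manipulation of the semi-discrete stability proof: differentiate the third error equation in time and take $\phi_h=-\btheta_\psi$; take $q_h=\btheta_p$ in the second; take $\bv_h=\partial_t\btheta_{\bu}$ in the first; and add. Coercivity of $a_1^h$ and $a_2^h$ together with the structure of $\tilde{a}_2^h$ (recall \eqref{bfb2}) produce $\frac{\mu}{2}\frac{\mathrm{d}}{\mathrm{d}t}\|\beps(\btheta_{\bu})\|_0^2$, $\frac{\kappa_{\min}}{\eta}\|\nabla\btheta_p\|_0^2$, and — after the same rearrangement used there — a perfect time derivative of $\frac{1}{2\lambda}\sum_K\|\alpha\Pi^0_K\btheta_p-\btheta_\psi\|_{0,K}^2$ plus the stabilisation term $\frac{\alpha^2}{2\lambda}\frac{\mathrm{d}}{\mathrm{d}t}\sum_KS_0^K((I-\Pi^0_K)\btheta_p,(I-\Pi^0_K)\btheta_p)$, all of which stay non-negatively on the left. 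The term $F^h-F$, paired with $\partial_t\btheta_{\bu}$, is integrated by parts in time and absorbed via Korn's and Young's inequalities, leaving $\|\bb(t)\|_1^2$- and $\int_0^t\|\partial_t\bb(s)\|_1^2\,\mathrm{d}s$-type contributions; the remaining right-hand terms are handled by Cauchy--Schwarz, Poincar\'e and Young, their $O(h^2)$ factors producing $Ch^2$ after time integration. Since $\bu(0),p(0),\psi(0)$ — hence their poroelastic projections — vanish, we have $\btheta_{\bu}(0)=\cero$, $\btheta_p(0)=0$, $\btheta_\psi(0)=0$, so Gr\"onwall's inequality yields $\mu\|\beps(\btheta_{\bu}(t))\|_0^2+\int_0^t\|\nabla\btheta_p(s)\|_0^2\,\mathrm{d}s\le Ch^2$. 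Finally $\|\btheta_\psi(t)\|_0$ is recovered from the discrete inf-sup condition \eqref{discr-infsup} applied to the first error equation, exactly as in \eqref{bound:semi_psi}: $\|\btheta_\psi\|_0\le C\big(\|\beps(\btheta_{\bu})\|_0+\|F^h-F\|_{(\bV_h)'}\big)\le C(\|\beps(\btheta_{\bu})\|_0+h^2)$.

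The main obstacle is the delicacy already flagged after the fully-discrete stability theorem: the zero-order stabilisation $S_0^K$ is not consistent on $\bbeta_p$, so its contribution to $\btheta_p$ must be kept as a time derivative on the left-hand side, and the cross terms $b_2(\cdot,\partial_t\bbeta_\psi)$, $a_3(\bbeta_\psi,\cdot)$ and the $\tilde{a}_2^h$-inconsistency must be bounded without producing any factor depending on $\lambda$ or on the stabilisation constants $\tilde{\zeta}_*,\tilde{\zeta}^*$; this requires grouping the $1/\lambda$- and $\alpha^2/\lambda$-weighted quantities exactly as in the stability proof before Young's inequality is invoked, and using the $\lambda$-robust coercivity bounds for $a_1^h$ and $a_2^h$. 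A secondary point is that $\Lambda$ and the term $\partial_t\bbeta_\psi$ force an additional best-approximation argument and a little extra temporal regularity, namely $\partial_t p\in H^2(\Omega)$, $\partial_t\bu\in[H^2(\Omega)]^2$ and $\partial_t\psi\in H^1(\Omega)$, implicit in the standing hypotheses.
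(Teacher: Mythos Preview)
Your proposal is correct and follows essentially the same route as the paper's own proof: splitting each error through the poroelastic projection $I^h$, deriving the same three error equations, testing with $\partial_t\btheta_{\bu}$, $\btheta_p$, and $-\btheta_\psi$ after differentiating the constraint equation, assembling the perfect time derivatives of $\|\beps(\btheta_{\bu})\|_0^2$, $\|\alpha\Pi^0_K\btheta_p-\btheta_\psi\|_{0,K}^2$ and the $S_0^K$-stabilisation term on the left, applying Gr\"onwall, and recovering $\|\btheta_\psi\|_0$ from the discrete inf-sup condition. The only minor differences are notational (your $\btheta$ is the paper's $-e^A$), your use of $L^2$-orthogonality to obtain the sharper $O(h^2)$ bound for $F^h-F$ and $G^h-G$ where the paper is content with $O(h)$, and your handling of the initial terms by exploiting the zero initial data directly rather than through a specific choice of discrete initial approximations; none of these changes the argument.
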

	\begin{proof}
		Invoking the Scott-Dupont Theory (see Ref. \cite{brenner08})
		for the polynomial approximation: there exists a constant $C>0$
		such that for every $s$ with $0\le s\le 1$ and for every $u\in H^{1+s}(K)$,
		there exists $u_\pi\in\mathbb{P}_k(K)$, $k=0,1$, such that
		\begin{equation}\label{poly_est_u}
		\| u - u_{\pi} \|_{0,K} + h_K |u -u_{\pi} |_{1,K} \leq C h_K^{1+s} |u|_{1+s,K}
		\quad \text{for all $K \in \mathcal{T}_h$.}
		\end{equation}
		We can then write the displacement and total pressure error
		in terms of the poro\-elastic projector as follows
		\begin{align*}
		(\bu - \bu_h)(t) = (\bu - I^h_{\bu}\bu)(t) + (I^h_{\bu} \bu - \bu_h)(t) := e_{\bu}^I(t) + e_{\bu}^A(t), \\
		(\psi - \psi_h)(t) = (\psi - I^h_{\psi} \psi)(t) + (I^h_{\psi} \psi - \psi_h)(t) := e_{\psi}^I(t) + e_{\psi}^A(t).
		\end{align*}
		Then, a combination of equations \eqref{weak-Iuh}, \eqref{weak-uh} and \eqref{weak-u} gives
		\begin{align*}
		a_1^h(e_{\bu}^A, \bv_h)+ b_1(\bv_h, e_{\psi}^A) & = (a_1(\bu,\bv_h) -a_1^h(\bu_h, \bv_h)) + b_1(\bv_h, \psi- \psi_h) \\&
		= (F-F^h)(\bv_h),
		\end{align*}
		and taking as test function $\bv_h = \partial_t e_{\bu}^A$, we
		can write the relation
		\begin{align}
		a_1^h(e_{\bu}^A, \partial_t e_{\bu}^A)
		+ b_1(\partial_t e_{\bu}^A, e_{\psi}^A) = (F-F^h)(\partial_t e_{\bu}^A). \label{est-eq1}
		\end{align}
		Now, we write the pressure error in terms of the poroelastic projector as follows
		\begin{align*}
		(p - p_h)(t) = (p - I^h_p p)(t) + (I^h_p p - p_h)(t) := e_{p}^I(t) + e_{p}^A(t).
		\end{align*}
		Using \eqref{weak-Iph}, \eqref{weak-ph} and \eqref{weak-p}, we obtain
		\begin{align*}
		& \tilde{a}_2^h(\partial_t e_p^A, q_h) + a_2^h(e_p^A, q_h) - b_2(q_h, \partial_t e_{\psi}^A ) \\ & = \tilde{a}_2^h(\partial_t I^h_p p, q_h) + a_2(p, q_h ) - b_2(q_h, \partial_t I^h_{\psi} \psi) - G^h(q_h) \\
		& =  (\tilde{a}_2^h(\partial_t I^h_p p, q_h) - \tilde{a}_2(\partial_t p, q_h)) + b_2(q_h, \partial_t e_{\psi}^I)  + (G- G^h)(q_h).
		\end{align*}
		We can take $q_h = e_p^A$, which leads to
		\begin{align} \label{est-eq2} \begin{split}
		& \tilde{a}_2^h(\partial_t e_p^A, e_p^A) +a_2^h(e_p^A, e_p^A) - b_2(e_p^A, \partial_t e_{\psi}^A) \\ & = (\tilde{a}_2^h(\partial_t I^h_p p, e_p^A) - \tilde{a}_2(\partial_t p, e_p^A) ) + b_2(e_p^A, \partial_t e_{\psi}^I) + (G- G^h)(e_p^A). \end{split}
		\end{align}
		Next we use \eqref{weak-Ipsih}, \eqref{weak-psih} and \eqref{weak-psi}, and this implies
		\begin{align*}
		& b_1(e_{\bu}^A, \phi_h)+ b_2(e_p^A , \phi_h) - a_3(e_{\psi}^A, \phi_h) = b_1(I^h_{\bu} \bu, \phi_h) + b_2(I^h_p p, \phi_h) - a_3(I^h_{\psi} \psi, \phi_h) \\
		& = b_1(\bu, \phi_h) + b_2(I^h_p p, \phi_h) - a_3(I^h_{\psi} \psi, \phi_h)
		= -b_2(e_p^I, \phi_h) + a_3(e_{\psi}^I, \phi_h).
		\end{align*}
		Differentiating the above equation with respect to time and taking $\phi_h = -e_{\psi}^A$, we can assert that
		\begin{align}
		- b_1(\partial_t e_{\bu}^A, e_{\psi}^A) - b_2(\partial_t e_p^A, e_{\psi}^A) + a_3(\partial_t e_{\psi}^A, e_{\psi}^A) = b_2(\partial_t e_p^I, e_{\psi}^A) -a_3(\partial_t e_{\psi}^I, e_{\psi}^A). \label{est-eq3}
		\end{align}
		Then we simply add \eqref{est-eq1}, \eqref{est-eq2} and \eqref{est-eq3}, to obtain
		\begin{align} \label{est-eq4} \begin{split}
		&a_1^h(e_{\bu}^A, \partial_t e_{\bu}^A)  + \tilde{a}_2^h(\partial_t e_p^A, e_p^A) +a_2^h(e_p^A, e_p^A) \\
		& + a_3(\partial_t e_{\psi}^A, e_{\psi}^A) - b_2(e_p^A, \partial_t e_{\psi}^A) - b_2(\partial_t e_p^A, e_{\psi}^A)\\
		& = (F- F^h)(\partial_{t} e_{\bu}^A ) + (\tilde{a}_2^h(\partial_t I^h_p p, e_p^A) - \tilde{a}_2(\partial_{t} p, e_p^A))
		\\& \quad + b_2(e_p^A, \partial_{t} e_{\psi}^I) + (G- G^h) (e_p^A)  + b_2(\partial_{t} e_p^I, e_{\psi}^A) - a_3(\partial_{t} e_{\psi}^I, e_{\psi}^A). \end{split}
		\end{align}
		Regarding the left-hand side of \eqref{est-eq4}, repeating arguments to obtain alike to \eqref{semi:stab}. That is,
		\begin{align*}
		& a_1^h(e_{\bu}^A, \partial_t e_{\bu}^A)  + \tilde{a}_2^h(\partial_t e_p^A, e_p^A)  \\
		& +a_2^h(e_p^A, e_p^A) + a_3(\partial_t e_{\psi}^A, e_{\psi}^A) - b_2(e_p^A, \partial_t e_{\psi}^A) - b_2(\partial_t e_p^A, e_{\psi}^A) \\
		&  \ge \frac{1}{2}\frac{\mathrm{d}}{\mathrm{d}t} a_1^h(e_{\bu}^A, e_{\bu}^A) + \frac{c_0}{2}\frac{\mathrm{d}}{\mathrm{d}t} \| e_p^A\|_0^2 + a_2^h(e_p^A, e_p^A) \\
		& \qquad \qquad + \frac{1}{\lambda} \sum_{K} \Bigl( \alpha^2 \bigl(\partial_{t} (\Pi^0_K e_p^A), \Pi^0_K e_p^A\bigr)_{0,K}  + \alpha^2 S^K_0  \bigl((I-\Pi^0_K) \partial_{t} e_p^A, (I-\Pi^0_K) e_p^A \bigr) \\
		& \qquad \qquad
		+ (\partial_{t} e_{\psi}^A, e_{\psi}^A)_{0,K} - \alpha (\Pi^0_K e_p^A, \partial_{t} e_{\psi}^A)_{0,K}
		- \alpha (\Pi^0_K \partial_{t} e_p^A, e_{\psi}^A)_{0,K} \Bigr)  \\
		&  \ge C \bigg( \mu\frac{\mathrm{d}}{\mathrm{d}t} \| \beps(e_{\bu}^A)\|_0^2 + c_0\frac{\mathrm{d}}{\mathrm{d}t} \| e_p^A\|_0^2 + \frac{2 \kappa_{\min}}{\eta} \|\nabla e_p^A\|_0^2  \\
		& \qquad \qquad
		+ \frac{1}{\lambda} \sum_{K} \biggl( \alpha^2\frac{\mathrm{d}}{\mathrm{d}t}\| (I-\Pi^0_K)  e_p^A\|_{0,K}^2
		+\frac{\mathrm{d}}{\mathrm{d}t} \|\alpha \Pi^0_K e_p^A -e_{\psi}^A\|_{0,K}^2  \biggr) \bigg).
		\end{align*}
		Then integrating equation \eqref{est-eq4} in time implies the bound
		\begin{align*}
		& \mu \| \beps(e_{\bu}^A(t))\|_0^2 + c_0  \| e_p^A (t)\|_0^2 + \frac{\kappa_{\min}}{\eta}\int_0^t \|\nabla e_p^A(s)\|_0^2 \, \mathrm{d}s \\
		& + \frac{1}{\lambda} \sum_{K} \Bigl(\alpha^2  \| (I-\Pi^0_K)  e_p^A(t)\|_{0,K}^2
		+  \|(\alpha \Pi^0_K e_p^A -e_{\psi}^A)(t)\|_{0,K}^2  \Bigr) \\
		& \lesssim \mu \| \beps(e_{\bu}^A(0))\|_0^2 + c_0  \| e_p^A (0)\|_0^2 \\
		& \quad + \frac{1}{\lambda} \sum_{K} \Bigl(\alpha^2  \| (I-\Pi^0_K)  e_p^A(0)\|_{0,K}^2 +  \|(\alpha \Pi^0_K e_p^A -e_{\psi}^A)(0)\|_{0,K}^2  \Bigr) \\
		& \quad + \underbrace{ \rho \int_0^t  \bigl((\bb- \bb^h)(s),\partial_{t} e_{\bu}^A(s) \bigr)_{0, \Omega} \, \mathrm{d}s}_{=:D_1} + \underbrace{\int_0^t \bigl((\ell - \ell^h)(s), e_p^A(s) \bigr)_{0, \Omega}\,  \mathrm{d}s}_{=:D_2} \\
		& \quad + \underbrace{\int_0^t \sum_K \Bigl(\tilde{a}_2^{h,K} \bigl(\partial_t(I^h_p p - p_{\pi})(s), e_p^A(s) \bigr) - \tilde{a}^{K}_2 \bigl(\partial_t(p - p_{\pi})(s), e_p^A(s) \bigr) \Bigr) \, \mathrm{d}s}_{=:D_3} \\
		& \quad + \underbrace{\int_0^t \Bigl(b_2 \bigl(e_p^A(s), \partial_{t} e_{\psi}^I(s) \bigr) + b_2 \bigl(\partial_{t} e_p^I (s), e_{\psi}^A (s)\bigr) - a_3 \bigl(\partial_{t} e_{\psi}^I(s), e_{\psi}^A(s) \bigr) \Bigr) \, \mathrm{d}s}_{=:D_4}.
		\end{align*}
		Then we can integrate by parts (also in time) and
		use  Cauchy-Schwarz inequality to arrive at
		\begin{align*}
		D_1 & = \rho \biggl(  \bigl((\bb-\bb^h)(t), e_{\bu}^A(t) \bigr)_{0, \Omega} - \bigl((\bb-\bb^h)(0),e_{\bu}^A(0) \bigr)_{0, \Omega} \\ & \qquad \qquad  + \int_0^t \bigl(\partial_{t} (\bb- \bb^h)(s),  e_{\bu}^A(s) \bigr)_{0, \Omega} \, \mathrm{d}s \biggr)\\
		& \le C_1(\rho)  h \biggl(|\bb(t)|_1 \| e_{\bu}^A(t)\|_0 + |\bb(0)|_1 \| e_{\bu}^A(0)\|_0 + \int_0^t | \partial_{t}\bb (s)|_{1} \| e_{\bu}^A (s)\|_{0}\, \mathrm{d}s \biggr),
		\end{align*}
		where we have used standard error estimate for the
		$L^2$-projection $\bPi_K^{0,0}$ onto piecewise constant functions.
		Using also Cauchy-Schwarz inequality and standard  error
		estimates for $\Pi_K^{0}$ on the term $D_2$ readily gives
		\begin{align*}
		D_2 \le  C_2 h \int_0^t |\ell (s)|_1 \|e_p^A(s)\|_0\, \mathrm{d}s.
		\end{align*}
		On the other hand, considering the polynomial approximation
		$p_{\pi}$ (cf. \eqref{poly_est_u}) of $p$ and utilising the triangle inequality yield
		\begin{align*}
		D_3 & \le C_3\bigg(c_0 + \frac{\alpha^2}{\lambda}\bigg) \\
		& \quad \times  \int_0^t \sum_K \Bigl(\| \partial_{t} (I^h_p p - p_{\pi})(s)\|_{0,K} + \| \partial_{t} (p - p_{\pi})(s)\|_{0,K} \Bigr)\|e_p^A(s)\|_{0,K} \, \mathrm{d}s \\
		& \le C_3 h^2 \bigg(c_0 + \frac{\alpha^2}{\lambda}\bigg) \int_0^t |\partial_{t} p(s)|_2 \|e_p^A(s)\|_0\, \mathrm{d}s.
		\end{align*}
		Also,
		\begin{align*}
		D_4 & = \int_0^t \Bigl(b_2 \bigl(e_p^A(s), \partial_{t} e_{\psi}^I(s)\bigr) + b_2 \bigl(\partial_{t} e_p^I (s), e_{\psi}^A (s)\bigr) - a_3 \bigl(\partial_{t} e_{\psi}^I(s), e_{\psi}^A(s)\bigr) \Bigr) \, \mathrm{d}s \\
		& \le \frac{1}{\lambda} \int_0^t  \Bigl(\alpha \| e_p^A (s)\|_0 \| \partial_{t} e_{\psi}^I(s) \|_0+  \bigl(\alpha \|\partial_{t} e_p^I(s) \|_0 + \| \partial_{t} e_{\psi}^I (s)\|_0 \bigr) \|e_{\psi}^A (s)\|_0 \Bigr)\, \mathrm{d}s \\
		& \le  \frac{C_4}{\lambda}   h \int_0^t  \big( \alpha \| e_p^A (s)\|_0 (| \partial_{t} \psi(s) |_1 +
		|\partial_t \bu(s)|_2 ) + (\alpha h |\partial_{t} p(s) |_2 + | \partial_{t} \psi (s)|_1 \\
		& \qquad \qquad \qquad \qquad  + |\partial_t \bu(s)|_2) \|e_{\psi}^A (s)\|_0 \bigr) \, \mathrm{d}s.
		\end{align*}
		Using \eqref{discr-infsup} and a combination of equations \eqref{weak-Iuh},
		\eqref{weak-uh} and \eqref{weak-u}, we get
		\begin{align} \label{bound:semi_inf-sup}
		\begin{split}
		\| e_{\psi}^A(t) \|_0 & \le \sup_{\bv_h \in \bV_h} \frac{b_1(\bv_h,  e_{\psi}^A(t))}{\| \bv_h \|_1} \le  C_5 \left( \rho \sum_K \| (\bb - \bb^h)(t)\|_{0,K} + \mu \| \beps(e_{\bu}^A(t))\|_0 \right) \\
		& \le C_5 \bigl( \rho \; h | \bb(t) |_1 + \mu \| \beps(e_{\bu}^A(t))\|_0 \bigr).
		\end{split}
		\end{align}
		Then the bound of~$D_4$ becomes
		\begin{align*}
		D_4 \le  \frac{C_6}{\lambda}   h \int_0^t  \Bigl( &(\alpha h |\partial_{t} p(s) |_2
		+ | \partial_{t} \psi (s)|_1+|\partial_t \bu(s)|_2) ( \rho h |\bb(s)|_1 + \mu \| \beps(e_{\bu}^A(t))\|_0)  \\
		& + \alpha \| e_p^A (s)\|_0 (| \partial_{t} \psi(s) |_1
		+|\partial_t \bu(s)|_2) \Bigr) \, \mathrm{d}s.
		\end{align*}
		Combining the bounds of all $D_i, i=1,2,3,4$ implies that
		\begin{align*}
		&  \mu \| \beps(e_{\bu}^A(t))\|_0^2 + c_0 \| e_p^A (t)\|_0^2 + \frac{\kappa_{\min}}{\eta} \int_0^t \| \nabla e_p^A (s)\|_0^2  \, \mathrm{d} s
		\\ & + \frac{1}{\lambda} \sum_{K} \Bigl( \alpha^2 \| (I - \Pi^0_K) e_p^A (t)\|_{0,K}^2 +  \| (\alpha \Pi^0_K e_p^A - e_{\psi}^A)(t) \|_{0,K}^2\Bigr)  \\
		& \le \mu \| \beps(e_{\bu}^A(0))\|_0^2 + \Big(c_0 + \frac{\alpha^2}{\lambda} \Big)  \| e_p^A (0)\|_0^2 +  \frac{1}{\lambda} \|e_{\psi}^A(0)\|_0^2  \\
		& \quad + \frac{\mu}{2} \| \beps(e_{\bu}^A(t))\|_0^2 + C \,h \bigg(h |\bb(t)|_1^2  + |\bb(0)|_1 \| e_{\bu}^A(0)\|_0 \\
		& \qquad \qquad + \int_0^t \biggl( | \partial_{t}\bb (s)|_{1} + h |\partial_{t} p(s) |_2 + | \partial_{t} \psi (s)|_1 +|\partial_t \bu(s)|_2\biggr) \mu \| \beps(e_{\bu}^A (s))\|_{0}\, \mathrm{d}s  \\
		& \qquad \qquad  + \int_0^t \biggl( |\ell (s)|_1 + h |\partial_{t} p(s) |_2 + | \partial_{t} \psi(s) |_1 +|\partial_t \bu(s)|_2 \biggr) \|e_p^A(s)\|_0\, \mathrm{d}s \\ & \qquad \qquad + h \int_0^t
		\bigl( h |\partial_{t} p(s) |_2 + | \partial_{t} \psi (s)|_1 +|\partial_t \bu(s)|_2\bigr) |\bb(s)|_1 \, \mathrm{d}s  \biggr).
		\end{align*}
		The Poincar\'e, Young's inequalities and Gronwall lemma now allows us to conclude that
		\begin{align*}
		&  \mu \| \beps(e_{\bu}^A(t))\|_0^2 + c_0 \| e_p^A (t)\|_0^2 + \frac{\kappa_{\min}}{\eta} \int_0^t \| \nabla e_p^A (s)\|_0^2  \, \mathrm{d} s
		\\
		& \le \mu \| \beps(e_{\bu}^A(0))\|_0^2 + \Big(c_0 + \frac{\alpha^2}{\lambda} \Big)  \| e_p^A (0)\|_0^2 +  \frac{1}{\lambda} \|e_{\psi}^A(0)\|_0^2  \\
		& \quad + C \,h^2 \bigg( \sup_{t \in [0,t_{\text{final}}] } |\bb(t)|_1^2 + \int_0^t \biggl( |\bb(s)|_1^2 + | \partial_{t}\bb (s)|_{1}^2 + |\ell (s)|_1^2 \\
		& \qquad \qquad +  | \partial_{t} \psi (s)|_1^2 +|\partial_t \bu(s)|_2^2  + h^2 |\partial_{t} p(s) |_2^2 \biggr) \, \mathrm{d}s \biggr).
		\end{align*}
	    Then choosing $\bu_h(0): =\bu_I(0)$,
		$\psi_h(0): = \Pi^{0,0}\psi(0)$, $p_h(0): = p_I(0)$ and applying
		the triangle inequality together with \eqref{bound:semi_inf-sup} completes the rest of the proof.
		\end{proof}
	
	\begin{theorem}[Fully-discrete error estimates]
		Let ($\bu(t),p(t),\psi(t)$) $\in \bV \times Q \times Z$
		and ($\bu_h^n,p_h^n,\psi_h^n$) $\in \bV_h \times Q_h \times Z_h$ be the unique
		solutions to problems \eqref{weak-u}-\eqref{weak-psi} and \eqref{weak-uhn}-\eqref{weak-psihn},
		respectively. Then the following estimates hold for any $n=1,\ldots,N$, with constants~$C$ independent of~$h,\, \Delta t ,\, \lambda$:
			\begin{align} \label{th4.3-est}
			\begin{split}
			& \mu \| \beps(\bu(t_n) - \bu_h^n) \|_0^2 \, + \, \| \psi(t_n) - \psi_h^n \|_0^2  \\
			& \qquad \quad + (\Delta t) \frac{\kappa_{\min}}{\eta}\| \nabla (p(t_n) - p_h^n)\|_0^2   \, \le C\, (h^2 + \Delta t^2).
			\end{split}
			\end{align}
	\end{theorem}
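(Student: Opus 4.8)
The plan is to follow the template of the semi-discrete estimate of Theorem~\ref{th:semid}, replacing the time derivative by the backward difference $\delta_t f^n:=(f^n-f^{n-1})/\Delta t$ and keeping track of the resulting time-consistency residuals. Using the poroelastic projection $I^h=(I^h_{\bu},I^h_p,I^h_{\psi})$ of \eqref{weak-Iuh}--\eqref{weak-Iph}, I split each error at $t_n$ into a projection part and a discrete part,
\begin{align*}
\bu(t_n)-\bu_h^n &= \big(\bu(t_n)-I^h_{\bu}\bu(t_n)\big)+\big(I^h_{\bu}\bu(t_n)-\bu_h^n\big)=:e_{\bu}^{I,n}+e_{\bu}^{A,n},
\end{align*}
and analogously $p(t_n)-p_h^n=e_p^{I,n}+e_p^{A,n}$, $\psi(t_n)-\psi_h^n=e_{\psi}^{I,n}+e_{\psi}^{A,n}$. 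By the projection bounds \eqref{estimate-Ihu}--\eqref{estimate-Ihp}, Lemma~\ref{interpolant_u} and the assumed spatial regularity (applied also to $\partial_t\bu$, $\partial_t p$, $\partial_t\psi$), the parts $e^{I,n}_\star$ and their discrete differences $\delta_t e^{I,n}_\star$ are of order $h$ or $h^2$ in the relevant norms, so it suffices to estimate the discrete parts $e^{A,n}_\star\in\bV_h\times Q_h\times Z_h$.

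\textbf{Error equations and consistency residual.}
Subtracting the fully discrete scheme \eqref{weak-uhn}--\eqref{weak-psihn} from \eqref{weak-u}--\eqref{weak-psi} evaluated at $t=t_n$ and using the defining relations of $I^h$ yields an error system: the momentum and constraint equations have exactly the structure met in Theorem~\ref{th:semid}, with right-hand sides $(F-F^h)(\bv_h)$ and $-b_2(e_p^{I,n},\phi_h)+a_3(e_{\psi}^{I,n},\phi_h)$, while the mass equation reads $\tilde a_2^h(\delta_t e_p^{A,n},q_h)+a_2^h(e_p^{A,n},q_h)-b_2(q_h,\delta_t e_{\psi}^{A,n})=R^n(q_h)$, with a residual $R^n$ collecting $(G-G^h)(q_h)$, the $\tilde a_2^h$-consistency defect $\tilde a_2^h(\delta_t I^h_p p(t_n),q_h)-\tilde a_2(\partial_t p(t_n),q_h)$, the term $b_2(q_h,\partial_t\psi(t_n)-\delta_t\psi(t_n))$, and $b_2(q_h,\delta_t e_{\psi}^{I,n})$. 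These are controlled with the integral Taylor remainder $\partial_t f(t_n)-\delta_t f^n=\tfrac1{\Delta t}\int_{t_{n-1}}^{t_n}(s-t_{n-1})\partial_{tt}f(s)\,\mathrm{d}s$, with a polynomial approximation $p_\pi$ as in \eqref{poly_est_u} for the $\tilde a_2^h$ term (just as in the bound for $D_3$ in Theorem~\ref{th:semid}), and with the $\Pi^0_K$-estimate for $(G-G^h)$; this gives a bound of $R^n(q_h)$ by a constant times $\big(h\,|\ell(t_n)|_1+h^2|\partial_t p(t_n)|_2+\Delta t\big)$ (in the appropriate integral-in-time norms) times $\|q_h\|_0$, plus $b_2$-type interpolation remainders handled through \eqref{estimate-Ihpsi}.

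\textbf{Energy estimate, summation and Gronwall.}
Next I repeat the energy computation leading to \eqref{semi:stab}--\eqref{eq:combined}: I test the momentum error equation with $\bv_h=e_{\bu}^{A,n}-e_{\bu}^{A,n-1}$, the mass error equation with $q_h=e_p^{A,n}$, and subtract the difference (at steps $n$ and $n-1$) of the constraint error equation tested with $\phi_h=-e_{\psi}^{A,n}$; the $b_1$ couplings cancel, and with \eqref{bound:discrete_ineq} the $b_2$/$\tilde a_2^h$/$a_3$ block reassembles into increments of $\mu\|\beps(e_{\bu}^{A,n})\|_0^2$, $c_0\|e_p^{A,n}\|_0^2$, $\tfrac{\alpha^2}{\lambda}\sum_K\|(I-\Pi^0_K)e_p^{A,n}\|_{0,K}^2$ and $\tfrac1\lambda\sum_K\|\alpha\Pi^0_K e_p^{A,n}-e_{\psi}^{A,n}\|_{0,K}^2$, together with the dissipation $(\Delta t)\,a_2^h(e_p^{A,n},e_p^{A,n})$, the stabilisation $S_0^K$ entering only through such increments so that all constants stay independent of $\tilde\zeta_*,\tilde\zeta^*$ and of $\lambda$. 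Summing over $n$, using the summation-by-parts identity \eqref{eq:discrete_equ} to move $\delta_t$ off $e_{\bu}^{A}$ onto the load consistency error $\bb-\bb_h$ (which is $O(h)$ by the $\bPi^{0,0}_K$-estimate, introducing also a $\partial_{tt}\bb$ term of order $\Delta t$), bounding $\|e_{\psi}^{A,n}\|_0$ via the discrete inf-sup condition \eqref{discr-infsup} as in \eqref{bound:semi_inf-sup}, and absorbing higher-order contributions with Cauchy--Schwarz, Korn, Poincar\'e and Young inequalities, I reach an inequality to which the discrete Gronwall lemma applies, precisely as in the proof of the fully-discrete stability theorem. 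Choosing $\bu_h^0=\bu_I(0)$, $p_h^0=p_I(0)$, $\psi_h^0=\Pi^{0,0}\psi(0)$ makes the initial terms $O(h^2)$, and a final triangle inequality with the $e^{I,n}$ bounds delivers \eqref{th4.3-est}.

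\textbf{Main obstacle.}
The delicate point, already signalled after the fully-discrete stability theorem, is to keep the VEM stabilisation contributions appearing only as perfect discrete-time increments, so that the bound does not degenerate as $\tilde\zeta_*\to0$, while simultaneously pairing each time-consistency residual with an unweighted quantity such as $\|e_p^{A,n}\|_0$ (absorbed through the $c_0$- or, via Poincar\'e, the $\tfrac{\kappa_{\min}}{\eta}$-term on the left) rather than with a $\tfrac1\lambda$-weighted norm; this is exactly what preserves both $\lambda$-robustness and the $O(\Delta t^2)$ rate, since the $O(\Delta t)$ bound on each residual is squared after Young's inequality. A secondary technical point is carrying out the discrete summation by parts on the mixed cross terms without losing a power of $\Delta t$. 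Beyond these, the argument is a routine merger of the semi-discrete estimate of Theorem~\ref{th:semid} with the discrete Gronwall machinery already used for the fully-discrete stability bound.
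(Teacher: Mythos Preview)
Your proposal is correct and follows essentially the same approach as the paper: the same error splitting via the poroelastic projection, the same test-function choices ($\bv_h=e_{\bu}^{A,n}-e_{\bu}^{A,n-1}$, $q_h=e_p^{A,n}$, $\phi_h=-e_{\psi}^{A,n}$ on the differenced constraint equation), the same recombination of the $\tilde a_2^h$/$b_2$/$a_3$ block into discrete-time increments via \eqref{bound:discrete_ineq}, summation by parts \eqref{eq:discrete_equ} on the load, discrete inf-sup for $e_{\psi}^{A,n}$, discrete Gronwall, and the same choice of discrete initial data. The one organizational difference is that the paper differentiates the continuous constraint equation \eqref{weak-psi} in time and inserts it, which produces an extra $b_1$-type time-consistency term (their $L_3$, controlled by $\|\partial_{tt}\bu\|$); your version, taking the plain difference of the constraint error equation at $n$ and $n-1$, avoids that term and is slightly cleaner, but the resulting estimates are the same.
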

	\begin{proof}
		As done for the semidiscrete case, we split the individual errors as
		\begin{align*}
		\bu(t_n) - \bu_h^n & = (\bu(t_n) - I^h_{\bu} \bu(t_n)) + (I^h_{\bu} \bu(t_n)- \bu_h^n):= E_{\bu}^{I,n} + E_{\bu}^{A,n}, \\
		\psi(t_n) - \psi_h^n &= (\psi(t_n) - I^h_{\psi} \psi(t_n)) + (I^h_{\psi} \psi(t_n)- \psi_h^n):= E_{\psi}^{I,n} + E_{\psi}^{A,n}, \\
		p(t_n) - p_h^n &= (p(t_n) - I^h_{p} p(t_n)) + (I^h_{p} p(t_n)- p_h^n):= E_{p}^{I,n} + E_{p}^{A,n}.
		\end{align*}
		Then, from estimate \eqref{estimate-Ihu} we have
		\begin{align} \label{estimate-E_u^I}
		\|E_{\bu}^{I,n} \|_1 &\le C h (|\bu(t_n)|_2 + |\psi(t_n)|_1) \nonumber \\
		& \le C h ( | \bu(0) |_2 + | \psi(0) |_1 + \|\partial_t\bu\|_{\bL^1(0,t_n; \bH^2(\Omega))} + \|\partial_t \psi\|_{L^1(0,t_n; H^1(\Omega))} ).
		\end{align}
		Following the same steps as before, we get
		\begin{align} \label{estimate-E_{psi}^I}
		\|E_{\psi}^{I,n} \|_0 & \le C h ( | \bu(0) |_2 + | \psi(0) |_1 + \|\partial_t\bu\|_{\bL^1(0,t_n; \bH^2(\Omega))} + \|\partial_t\psi\|_{L^1(0,t_n; H^1(\Omega))} ),\\
		\label{estimate-E_{p}^I}
		\|E_{p}^{I,n} \|_1 &\le C h ( | p(0) |_2 + \|\partial_t p\|_{L^1(0,t_n; H^2(\Omega))}).
		\end{align}
		From equations  \eqref{weak-Iuh}, \eqref{weak-uhn} and \eqref{weak-u}, we readily get
		\begin{align} \label{estimate-eq1}
		a_1^h(E_{\bu}^{A,n},\bv_h) + b_1(\bv_h, E_{\psi}^{A,n}) = F^n(\bv_h) - F^{h,n}(\bv_h).
		\end{align}
		Now, use of \eqref{weak-Ipsih}, \eqref{eq:psihn}
		and differentiating \eqref{weak-psi} with respect to time implies
		\begin{align} \label{estimate-eq2}
		&b_1(E_{\bu}^{A,n} - E_{\bu}^{A,n-1}, \phi_h) + b_2(E_p^{A,n}-E_p^{A,n-1}, \phi_h) - a_3(E_{\psi}^{A,n} - E_{\psi}^{A,n-1}, \phi_h) \nonumber \\
		& \quad = b_1((\bu(t_n) - \bu(t_{n-1})) - (\Delta t) \partial_{t} \bu(t_n), \phi_h) \nonumber\\
		& \qquad + b_2((I_p^h p(t_n) - I_p^h p(t_{n-1})) - (\Delta t) \partial_{t} p(t_n), \phi_h) \nonumber \\
		& \qquad - a_3((I_{\psi}^h \psi(t_n) - I_{\psi}^h \psi(t_{n-1})) - (\Delta t) \partial_{t} \psi (t_n), \phi_h).
		\end{align}
		Choosing $\bv_h = E_{\bu}^{A,n} - E_{\bu}^{A,n-1 } $ in \eqref{estimate-eq1} and $\phi_h = - E_{\psi}^{A,n}$ in \eqref{estimate-eq2} then adding the outcomes, we get
		\begin{align} \label{estimate-eq3}
		& a_1^h(E_{\bu}^{A,n}, E_{\bu}^{A,n}- E_{\bu}^{A,n-1}) + a_3(E_{\psi}^{A,n} - E_{\psi}^{A,n-1}, E_{\psi}^{A,n}) - b_2(E_{p}^{A,n} - E_{p}^{A,n-1}, E_{\psi}^{A,n}) \nonumber \\
		& = \rho ( \bb(t_n) - \bb^n_h, E_{\bu}^{A,n}- E_{\bu}^{A,n-1 } )_{0, \Omega} \nonumber\\
		& \qquad  - b_1((\bu(t_n) - \bu(t_{n-1})) - (\Delta t) \partial_{t} \bu(t_n), E_{\psi}^{A,n}) \nonumber\\
		& \qquad - b_2((I_p^h p(t_n) - I_p^h p(t_{n-1})) - (\Delta t) \partial_{t} p(t_n), E_{\psi}^{A,n})\nonumber \\
		& \qquad + a_3((I_{\psi}^h \psi(t_n) - I_{\psi}^h \psi(t_{n-1})) - (\Delta t) \partial_{t} \psi (t_n), E_{\psi}^{A,n}).
		\end{align}
		Next, the use of \eqref{weak-Iph}, \eqref{weak-ph} and \eqref{weak-p} with $q_h = E_p^{A,n}$, readily gives
		\begin{align} \label{estimate-eq4}
		& \tilde{a}_2^h(E_{p}^{A,n} - E_{p}^{A,n-1}, E_{p}^{A,n}) +  \Delta t  a_2^h(E_{p}^{A,n}, E_{p}^{A,n}) - b_2(E_{p}^{A,n}, E_{\psi}^{A,n} - E_{\psi}^{A,n-1}) \nonumber
		\\ & = \Delta t ( \ell(t_n)- \ell^n_h, E_{p}^{A,n} )_{0, \Omega} + \tilde{a}_2^h( I^h_p p(t_n) - I^h_p p(t_{n-1}), E_{p}^{A,n}) \\
		& \qquad -  \tilde{a}_2((\Delta t) \partial_{t} p(t_n), E_{p}^{A,n}) + b_2(E_{p}^{A,n}, (\Delta t) \partial_{t} \psi - (I^h_{\psi} \psi(t_n)) - I^h_{\psi} \psi(t_{n-1})),
		\end{align}
		and adding the resulting equations \eqref{estimate-eq3}- \eqref{estimate-eq4} we can write
		\begin{align*}
		& a_1^h(E_{\bu}^{A,n}, E_{\bu}^{A,n}- E_{\bu}^{A,n-1 }) + a_3(E_{\psi}^{A,n} - E_{\psi}^{A,n-1}, E_{\psi}^{A,n}) -  b_2(E_{p}^{A,n} - E_{p}^{A,n-1}, E_{\psi}^{A,n})\\
		& -  b_2(E_{p}^{A,n}, E_{\psi}^{A,n} - E_{\psi}^{A,n-1}) + \tilde{a}_2^h(E_{p}^{A,n} - E_{p}^{A,n-1}, E_{p}^{A,n}) +  \Delta t  a_2^h(E_{p}^{A,n}, E_{p}^{A,n})  \\
		& = \rho ( \bb(t_n) - \bb^n_h, E_{\bu}^{A,n}- E_{\bu}^{A,n-1} )_{0, \Omega} + \Delta t ( \ell(t_n)- \ell^n_h, E_{p}^{A,n} )_{0, \Omega} \\
		& \qquad - b_1((\bu(t_n) - \bu(t_{n-1})) - (\Delta t) \partial_{t} \bu(t_n), E_{\psi}^{A,n}) \nonumber\\
		& \qquad - b_2((I_p^h p(t_n) - I_p^h p(t_{n-1})) - (\Delta t) \partial_{t} p(t_n), E_{\psi}^{A,n}) \\
		& \qquad +  a_3((I_{\psi}^h \psi(t_n) - I_{\psi}^h \psi(t_{n-1})) - (\Delta t) \partial_{t} \psi (t_n), E_{\psi}^{A,n})  \\
		& \qquad + \tilde{a}_2^h( I^h_p p(t_n) - I^h_p p(t_{n-1}), E_{p}^{A,n}) - \tilde{a}_2((\Delta t) \partial_{t} p(t_n), E_{p}^{A,n}) \\
		& \qquad + b_2(E_{p}^{A,n}, (\Delta t) \partial_{t} \psi - (I^h_{\psi} \psi(t_n)) - I^h_{\psi} \psi(t_{n-1})),
		\end{align*}
		and we will repeat the arguments identical to \eqref{semi:stab} to get		
		\begin{align*}
		& a_3(E_{\psi}^{A,n} - E_{\psi}^{A,n-1}, E_{\psi}^{A,n}) -  b_2(E_{p}^{A,n} - E_{p}^{A,n-1}, E_{\psi}^{A,n})\\
		& \qquad -  b_2(E_{p}^{A,n}, E_{\psi}^{A,n} - E_{\psi}^{A,n-1}) + \tilde{a}_2^h(E_{p}^{A,n} - E_{p}^{A,n-1}, E_{p}^{A,n}) \\
		& = (\Delta t) \bigg( c_0(\delta_t E_{p}^{A,n} , E_{p}^{A,n})_{0,\Omega} + \frac{1}{\lambda} \sum_{K} \big(\alpha^2 (\delta_t (I- \Pi^0_K) E_p^{A,n}, (I- \Pi^0_K) E_p^{A,n})_{0,K} \\
		& \qquad -(\delta_t (\alpha \Pi^0_K E_p^{A,n} - E_{\psi}^{A,n}), \alpha \Pi^0_K E_p^{A,n} - E_{\psi}^{A,n})_{0,K} \big) \bigg),
		\end{align*}
		The left-hand side can be bounded by using the inequality \eqref{bound:discrete_ineq} and then summing over $n$ we get
		\begin{align} \label{estimate-eq5}
		& \mu (\| \beps(E_{\bu}^{A,n})\|_0^2 - \| \beps(E_{\bu}^{A,0})\|_0^2) + c_0 (\| E_{p}^{A,n}\|_0^2 - \| E_{p}^{A,0}\|_0^2) + (\Delta t) \frac{\kappa_{\min}}{\eta} \sum_{j=1}^n \| \nabla E_{p}^{A,j} \|_0^2 \nonumber \\
		& \qquad + (1/\lambda) \bigg( \alpha^2 (\| (I- \Pi^0_K) E_p^{A,n} \|_0^2 - \| (I- \Pi^0_K) E_p^{A,0} \|_0^2) \nonumber \\
		& \qquad + \sum_{K} (\| \alpha \Pi^0_K E_p^{A,n} - E_{\psi}^{A,n}\|_0^2 - \| \alpha \Pi^0_K E_p^{A,0} - E_{\psi}^{A,0} \|_0^2 ) \bigg) \\
		& \le \underbrace{ \sum_{j=1}^n \rho ( \bb(t_j) - \bb^j_h, E_{\bu}^{A,j}- E_{\bu}^{A,j-1} )_{0,\Omega}}_{:=L_1}
		+ \underbrace{ \sum_{j=1}^n \Delta t ( \ell(t_j)- \ell^j_h, E_{p}^{A,j} )_{0,\Omega}}_{:=L_2} \nonumber \\
		& \qquad - \underbrace{ \sum_{j=1}^n b_1((\bu(t_n) - \bu(t_{n-1})) - (\Delta t) \partial_{t} \bu(t_n), E_{\psi}^{A,n})}_{:=L_3} \nonumber\\
		& \qquad - \underbrace{ \sum_{j=1}^n b_2((I_p^h p(t_j) - I_p^h p(t_{j-1})) - (\Delta t) \partial_{t} p(t_j), E_{\psi}^{A,j})}_{:=L_4} \nonumber\\
		& \qquad + \underbrace{ \sum_{j=1}^n a_3((I_{\psi}^h \psi(t_j) - I_{\psi}^h \psi(t_{j-1})) - (\Delta t) \partial_{t} \psi (t_j), E_{\psi}^{A,j}) }_{:=L_5}  \nonumber\\
		& \qquad + \underbrace{ \sum_{j=1}^n (\tilde{a}_2^h( I^h_p p(t_j) - I^h_p p(t_{j-1}), E_{p}^{A,j}) -  \tilde{a}_2((\Delta t) \partial_{t} p(t_j), E_{p}^{A,j}) )}_{:=L_6} \nonumber \\
		& \qquad + \underbrace{ \sum_{j=1}^n b_2(E_{p}^{A,j}, (\Delta t) \partial_{t} \psi - (I^h_{\psi} \psi(t_j) - I^h_{\psi} \psi(t_{j-1}))}_{:=L_7}. \nonumber
		\end{align}
		We bound the term $L_1$ with the formula \eqref{eq:discrete_equ}, the estimates of projection $\bPi^{0,0}_K$ the Taylor expansion and generalised Young's inequality,
		\begin{align*}
		L_1 &= \rho \big(((\bb - \bb_h)(t_n), E_{\bu}^{A,n})_{0,\Omega} - ((\bb - \bb_h)(0), E_{\bu}^{A,0})_{0,\Omega} \\
		& \qquad - \sum_{j=1}^{n} (\Delta t)( \delta_t(\bb - \bb_h)(t_j), E_{\bu}^{A,j-1})_{0,\Omega}) \big) \\
		& = \rho \bigg( ((\bb - \bb_h)(t_n), E_{\bu}^{A,n})_{0,\Omega} - ((\bb - \bb_h)(0), E_{\bu}^{A,0})_{0,\Omega} \\
		& \qquad \quad - \sum_{j=1}^{n} (\Delta t)( \partial_t(\bb - \bb_h)(t_j), E_{\bu}^{A,j-1})_{0,\Omega}  \\
		& \qquad \quad + \sum_{j=1}^{n} \Big( \int_{t_{j-1}}^{t_j} (s-t_{j-1})  \partial_{tt} (\bb -\bb_h)(s)\, \mathrm{d}s, E_{\bu}^{A,j-1}  \Big)_{0,\Omega}  \bigg)\\
		& \le \frac{\mu}{2} \| \beps (E_{\bu}^{A,n})\|_0^2 + \mu \| \beps (E_{\bu}^{A,0})\|_0^2 + C_1 \Big( \frac{\rho^2}{\mu}h^2  \big( \max_{0 \le j \le n} |\bb(t_j) |_1^2 + \Delta t \sum_{j=1}^n | \partial_{t} \bb |_1^2 \big) \\
		& \qquad \qquad \qquad \quad + (\Delta t) \sum_{j=0}^{n-1} \mu \| \beps(E_{\bu}^{A,j}) \|_0^2 + \frac{\rho^2}{\mu} (\Delta t)^2 h^2 \int_0^T | \partial_{tt} \bb (s)|_1^2 \, \mathrm{d}s \Big).
		\end{align*}
		Then the estimate of projection $\Pi^0_K$, Poincar\'e and Young's inequalities gives
		\begin{align*}
		L_2 & \le C_2 \sum_{j=1}^n (\Delta t) h^2 | \ell(t_j)|_2 \|\nabla E_{p}^{A,j}\|_0 \\
		& \le C_2 \sum_{j=1}^n (\Delta t) \frac{\eta }{\kappa_{\min}} h^4 | \ell(t_j)|_2^2 +  (\Delta t) \frac{\kappa_{\min}}{6 \eta} \sum_{j=1}^n \| \nabla E_{p}^{A,j}\|_0^2.
		\end{align*}
		The  discrete inf-sup condition \eqref{discr-infsup} yields
			\begin{align} \label{bound:discrete-inf-sup}
			\|E_{\psi}^{A,j} \|_0 \le C ( h |\bb(t_j)|_1 + \|\beps(E_{\bu}^{A,j}) \|_0 ).
			\end{align}
			Applying Taylor series expansion together with \eqref{bound:discrete-inf-sup},  the Cauchy Schwarz and Young's inequalities enable us
			\begin{align*}
			L_3 & \le C \sum_{j=1}^n \| ((\bu(t_j) - \bu(t_{j-1})) - (\Delta t) \partial_{t} \bu (t_j))) \|_0 (h |\bb(t_j)|_1 +  \|\beps(E_{\bu}^{A,j}) \|_0 ) \\
			& \le C \Big( (\Delta t )^2 \| \partial_{tt} \bu \|_{\bL^2(0,t_n;\bL^2(\Omega))}^2 + (\Delta t) \sum_{j=1}^n \Big( \rho^2 h^2 |\bb(t_j)|_1^2 + \mu \|\beps(E_{\bu}^{A,j}) \|_0^2 \Big).
			\end{align*}
		
		By use of estimates of the projection $I_p^h$, \eqref{bound:discrete-inf-sup}, the Cauchy Schwarz and Young's inequalities we get
		\begin{align*}
		L_4 & \le C_3 \frac{\alpha}{\lambda} \sum_{j=1}^n \Big( \| I_p^h (p(t_j) - p(t_{j-1})) - (p(t_j) - p(t_{j-1})) \|_0 \\
		& \qquad \qquad \qquad+ \| (p(t_j) - p(t_{j-1})) - (\Delta t) \partial_{t} p(t_j) \|_0 \Big)   \|E_{\psi}^{A,j} \|_0  \\
		& \le C_3 \frac{\alpha}{\lambda} \sum_{j=1}^n \Big( h^2 | p(t_j) - p(t_{j-1}) |_2 + \lVert \int_{t_{j-1}}^{t_j}(s-t_{j-1}) \partial_{tt} p(s)\, \mathrm{d}s \rVert_0 \Big) \|E_{\psi}^{A,j} \|_0
		\\
		& \le C_3 \frac{\alpha}{\lambda} \sum_{j=1}^n \bigg( h^2  \Big( (\Delta t) \int_{t_{j-1}}^{t_j}| \partial_{t}p(s)|_2^2\, \mathrm{d}s \Big)^{1/2} \\
		& \qquad \qquad \qquad+ \Big( (\Delta t )^{3} \int_{t_{j-1}}^{t_j} \| \partial_{tt} p(s ) \|_0^2\, \mathrm{d}s\Big)^{1/2} \bigg) \|E_{\psi}^{A,j} \|_0\\
		&  \le C_3 \Big(\frac{\alpha}{\lambda} \Big)^2  (1+ \mu)  \sum_{j=1}^n \bigg(  h^4  \Big(\int_{t_{j-1}}^{t_j}| \partial_{t}p(s) |_2^2\, \mathrm{d}s \Big)^2
		+ (\Delta t )^3 \| \partial_{tt} p \|_{L^2(0,t_n;L^2(\Omega))}^2 \bigg) \\
		& \qquad +  \rho^2 h^2(\Delta t ) \sum_{j=1}^n |\bb(t_j)|_1^2 + \mu (\Delta t ) \sum_{j=1}^n \|\beps(E_{\bu}^{A,j}) \|_0^2 \\
		& \le C \bigg(  h^4 \| \partial_{t}p \|_{L^2(0,t_n;H^2(\Omega))}^2
		+ (\Delta t )^2 \| \partial_{tt} p \|_{L^2(0,t_n;L^2(\Omega))}^2 \bigg) \\
		& \quad +  (\Delta t) \sum_{j=1}^n \Big( \rho^2 h^2 |\bb(t_j)|_1^2 + \mu  \|\beps(E_{\bu}^{A,j}) \|_0^2 \Big).
		\end{align*}
		The stability of $a_3(\cdot, \cdot)$ and the proof for the bound of $L_4$ gives
		\begin{align*}
		L_5 &\le  (1/ \lambda ) \sum_{j=1}^n \| (I_{\psi}^h \psi(t_j) - I_{\psi}^h \psi(t_{j-1})) - (\Delta t) \partial_{t} \psi (t_j) \|_0 ( \rho h |\bb(t_j)|_1 +  \|\beps(E_{\bu}^{A,j}) \|_0 ) \\
		& \le C \bigg(  h^2 (\| \partial_{t} \psi \|_{L^2(0,t_n;H^1(\Omega))}^2 + \| \partial_{t} \bu\|_{\bL^2(0,t_n;\bH^2(\Omega))}^2)
		+ (\Delta t )^2 \| \partial_{tt} \psi \|_{L^2(0,t_n;L^2(\Omega))}^2 \bigg) \\
		& \quad +  (\Delta t) \sum_{j=1}^n \Big( \rho^2 h^2 |\bb(t_j)|_1^2 + \mu  \|\beps(E_{\bu}^{A,j}) \|_0^2 \Big).
		\end{align*}
		The polynomial approximation $p_\pi$ for fluid pressure, stability of the bilinear forms $\tilde{a}_2(\cdot, \cdot ), \tilde{a}_2^h(\cdot, \cdot )$, the Cauchy Schwarz, Poincar\'e and Young's inequalitites gives		
		\begin{align*}
		L_6 & = \sum_{j=1}^n \Big(\tilde{a}_2^h( (I^h_p p(t_j) - I^h_p p(t_{j-1})) - (p_{\pi}(t_j) - p_{\pi}(t_{j-1})), E_{p}^{A,j})  \\
		&  \qquad \quad + \tilde{a}_2((p_{\pi}(t_j) - p_{\pi}(t_{j-1})) - (p(t_j) - p(t_{j-1})), E_{p}^{A,j})  \\
		& \qquad \quad +   \tilde{a}_2( (p(t_j) - p(t_{j-1})) - (\Delta t) \partial_{t} p(t_j), E_{p}^{A,j}) \Big) \\
		& \le C_5 \Big( c_0 +  \frac{\alpha^2}{\lambda} \Big) \sum_{j=1}^n \bigg( h^2 \Big( (\Delta t)  \int_{t_{j-1}}^{t_j} |\partial_t p(s) |_2^2 \, \mathrm{d}s \Big)^{1/2} \\
		& \qquad \qquad \qquad \qquad  \qquad \quad + \Big((\Delta t)^3\int_{t_{j-1}}^{t_j} \|  \partial_{tt} p(s) \|_0^2\, \mathrm{d}s \Big)^{1/2} \bigg) \| \nabla E_{p}^{A,j} \|_0 \\
		& \le C \sum_{j=1}^n \Big( h^4 \|\partial_t p \|_{L^2(0,t_n;H^2(\Omega))}^2 + (\Delta t)^2 \|  \partial_{tt} p \|_{L^2(0,t_n;L^2(\Omega))} \Big)  \\
		& \qquad + (\Delta t) \frac{\kappa_{\min}}{6 \eta} \sum_{j=1}^n \| \nabla E_{p}^{A,j} \|_0^2.
		\end{align*}
		The continuity of $b_2(\cdot, \cdot)$ and the bound of the $L_5$ gives 	
		\begin{align*}
		L_7 & \le  \Big(\frac{\alpha}{\lambda} \Big)  \sum_{j=1}^n \| (\Delta t) \partial_{t} \psi(t_j) - (I^h_{\psi} \psi(t_j) - I^h_{\psi} \psi(t_{j-1}))\|_0  \| E_{p}^{A,j}\|_0 \\
		& \le C \bigg(  h^2 (\| \partial_{t} \psi \|_{L^2(0,t_n;H^1(\Omega))}^2 + \| \partial_{t} \bu \|_{\bL^2(0,t_n;\bH^2(\Omega))}^2)+ (\Delta t )^2 \| \partial_{tt} \psi \|_{L^2(0,t_n;L^2(\Omega))}^2 \bigg)  \\
		& \quad
		+(\Delta t) \frac{\kappa_{\min}}{6 \eta} \sum_{j=1}^n \| \nabla E_p^{A,j} \|_0^2.
		\end{align*}
		The bounds of all $L_i$'s, $i=1, \dots, 7$ implies
		\begin{align*}
		&\mu \| \beps(E_{\bu}^{A,n})\|_0^2 + c_0 \| E_{p}^{A,n}\|_0^2 + (\Delta t) \frac{\kappa_{\min}}{\eta} \sum_{j=1}^n \| \nabla E_{p}^{A,j} \|_0^2 \\
		& \le \frac{\mu}{2} \| \beps (E_u^{A,n})\|_0^2 + (\Delta t) \frac{\kappa_{\min}}{2 \eta}\sum_{j=1}^n \| \nabla E_p^{A,j} \|_0^2 + C (\Delta t) \sum_{j=0}^{n} \mu \| \beps(E_u^{A,j}) \|_0^2 \\
		& \quad + C \Big( \| \beps(E_{\bu}^{A,0})\|_0^2 +  \| E_{p}^{A,0}\|_0^2 + \| E_{\psi}^{A,0}\|_0^2  \\
		& \qquad \qquad +  \Big(1 + \Delta t \Big) h^2   \max_{0 \le j \le n} |\bb(t_j) |_1^2 + h^2 \Delta t \sum_{j=1}^n (|\bb(t_j)|_1^2 + | \partial_{t} \bb |_1^2 )\\
		& \qquad  \qquad + h^2 (\Delta t)^2 \int_0^T | \partial_{tt} \bb (s)|_1^2 \, ds +  h^4 (\Delta t) \sum_{j=1}^n | \ell(t_j)|_2^2 \\
		& \qquad  \qquad
		+ (\Delta t )^2 \big(\| \partial_{tt} p \|_{L^2(0,t_n;L^2(\Omega))}^2  + \| \partial_{tt} \bu \|_{\bL^2(0,t_n;\bL^2(\Omega))}^2 + \| \partial_{tt} \psi \|_{L^2(0,t_n;L^2(\Omega))}^2 \big)\\
		& \qquad  \qquad +  h^2 \big(\| \partial_{t} \psi \|_{L^2(0,t_n;H^1(\Omega))}^2 + \| \partial_{t} \bu\|_{\bL^2(0,t_n;\bH^2(\Omega))}^2
		+ h^2 \| \partial_{t}p \|_{L^2(0,t_n;H^2(\Omega))}^2 \big) \bigg).
		\end{align*}
		The discrete Gronwall's inequality concludes that
		\begin{align*}
		&\mu \| \beps(E_{\bu}^{A,n})\|_0^2 + c_0 \| E_{p}^{A,n}\|_0^2 + (\Delta t) \frac{\kappa_{\min}}{\eta} \sum_{j=1}^n \| \nabla E_{p}^{A,j} \|_0^2 \\
		& \le C \bigg( \mu \| \beps(E_{\bu}^{A,0})\|_0^2 + (c_0 + \alpha^2/\lambda) \| E_{p}^{A,0}\|_0^2 + (1/\lambda) \| E_{\psi}^{A,0}\|_0^2  \\
		& \qquad  + \Big(1 + \Delta t \Big) h^2   \max_{0 \le j \le n} |\bb(t_j) |_1^2 + h^2 \Delta t \sum_{j=1}^n | \partial_{t} \bb |_1^2 \\
		& \qquad + (\Delta t)^2 h^2 \int_0^T | \partial_{tt} \bb (s)|_1^2 \, ds + h^2 (\Delta t) \sum_{j=1}^n |\bb(t_j)|_1^2 +  h^4 (\Delta t) \sum_{j=1}^n | \ell(t_j)|_2^2 \\
		&  \qquad + (\Delta t )^2 \big(\| \partial_{tt} \bu \|_{\bL^2(0,t_n;\bL^2(\Omega))}^2 + \| \partial_{tt} \psi \|_{L^2(0,t_n;L^2(\Omega))}^2 + \| \partial_{tt} p \|_{L^2(0,t_n;L^2(\Omega))}^2 \big)\\
		& \qquad +  h^2 \big( \| \partial_{t} \bu\|_{\bL^2(0,t_n;\bH^2(\Omega))}^2
		+ \| \partial_{t} \psi \|_{L^2(0,t_n;H^1(\Omega))}^2 + h^2 \| \partial_{t}p \|_{L^2(0,t_n;H^2(\Omega))}^2 \big) \bigg).
		\end{align*}
		Now the desire result \eqref{th4.3-est}  holds after  choosing $\bu_h^0: =\bu_I(0)$,
			$\psi_h^0: = \Pi^{0,0}\psi(0)$, $p_h^0: = p_I(0)$ and applying triangle's inequality together with \eqref{bound:discrete-inf-sup}.
	\end{proof}

	\section{Numerical results} \label{sec:results}
	In this section conduct numerical tests to  computationally  reconfirm  the convergence rates of the proposed virtual element scheme
	and present one test of applicative interest in poromechanics. All numerical results are produced by an
	in-house MATLAB code, using sparse factorisation as linear solver.
	
	\begin{figure}[t]
		\begin{center}
			\subfigure[]{\includegraphics[width=0.3\textwidth]{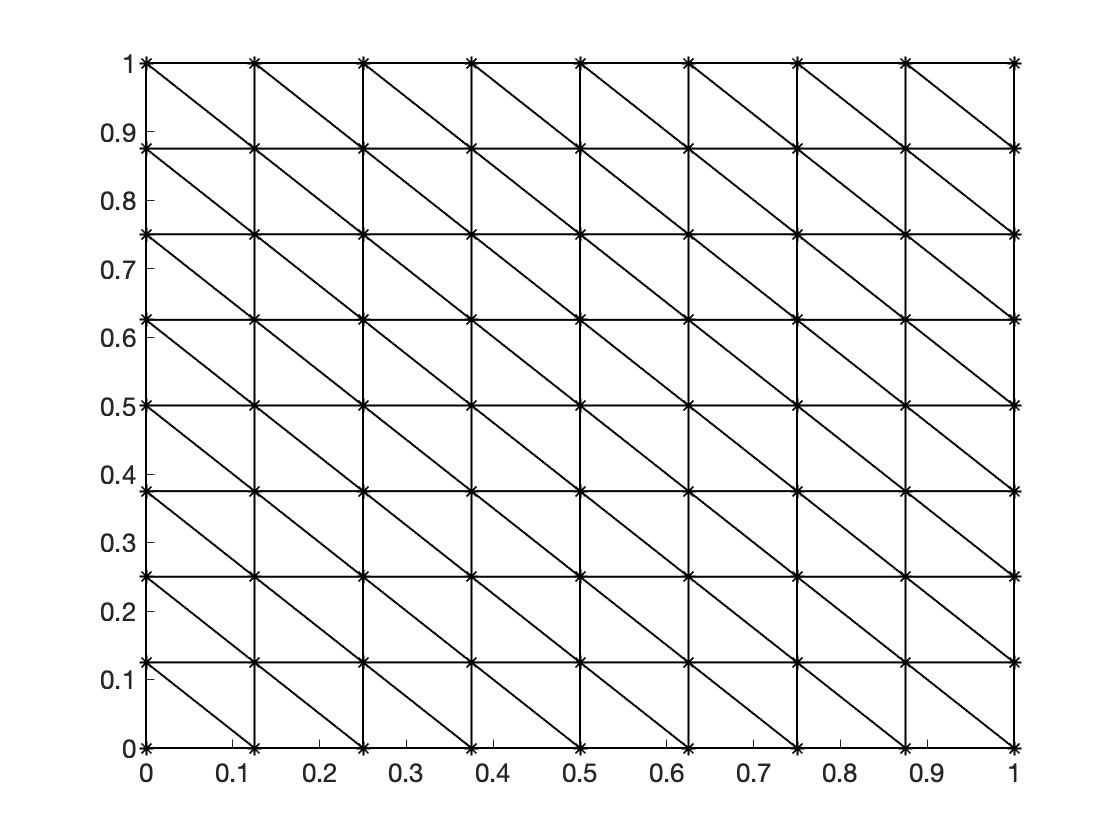}}
			\subfigure[]{\includegraphics[width=0.3\textwidth]{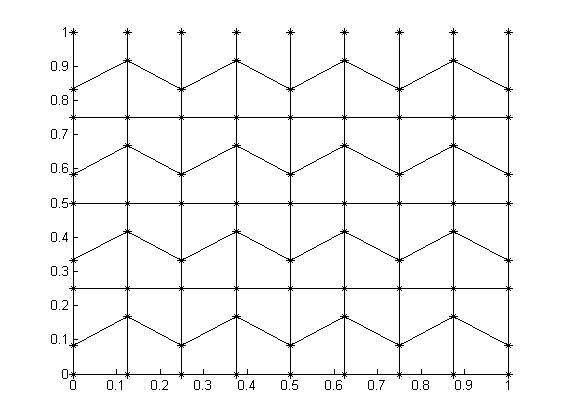}}
			\subfigure[]{\includegraphics[width=0.3\textwidth]{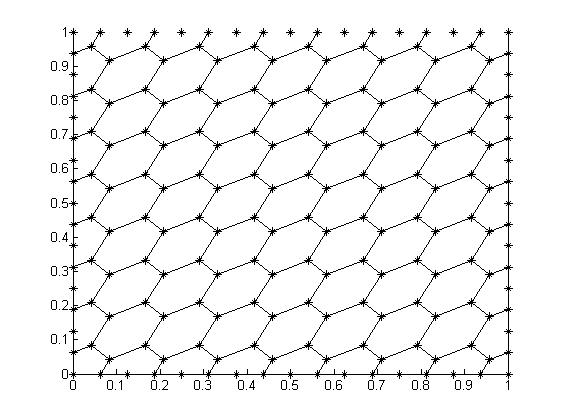}}
		\end{center}		
		\caption{Samples of triangular (a), distorted quadrilateral (b), and hexagonal (c) meshes employed for the numerical tests in this section.}
		\label{fig:meshes}
	\end{figure}
	
	\subsection{Verification of spatial convergence}
	First we consider a steady version of the poroelasticity equations. An exact solution of the problem on the square domain $(0,1)^2$ is given by the smooth functions
	\begin{gather*}
	\bu(x,y) = \begin{pmatrix}
	-\cos(2 \pi x) \;\sin(2 \pi y) + \sin(2 \pi y) + \sin^2(\pi x) \sin^2(\pi y) \\
	\sin(2 \pi x ) \cos(2 \pi y) - \sin(2 \pi x)
	\end{pmatrix}, \\
	p(x,y) = \sin^2(\pi x) \sin^2(\pi y), \quad \psi(x,y) = \alpha p - \lambda \vdiv\bu.
	\end{gather*}
	The body load~$\ff$ and the fluid source~$\ell$ are computed by evaluating  these closed-form solutions and the problem is completely characterised after specifying the model constants
	\begin{gather*}	
	\nu=0.3, \quad E_c=100, \quad \kappa=1, \quad \alpha=1, \quad c_0=1, \\
	\eta=0.1, \quad
	\lambda=\frac{E_c \nu }{(1+\nu)(1-2 \nu)}, \quad \mu=\frac{E_c}{(2+2 \nu)}.\end{gather*}
	
	On a sequence of successively refined grids (we have
	employed for this particular case, uniform triangular meshes as depicted in Figure~\ref{fig:meshes}(a))
	we compute errors and convergence rates according to
	the meshsize and tabulating also the number of degrees of freedom (Ndof).
	The experimental error decay (with respect to mesh refinement) is measured using individual
	relative norms defined as follows:
	\begin{gather*}
	e_1(\bu) := \frac{\bigl(\sum_{K \in \mathcal{T}_h} | \bu - \bPi^{\beps}_K \bu_h|_{1,K}^2 \bigr)^{1/2}}{|\bu|_{1,\Omega}}, \quad
	e_0(\bu) := \frac{\bigl(\sum_{K \in \mathcal{T}_h} \| \bu - \bPi^{\beps}_K \bu_h\|_{0,K}^2 \bigr)^{1/2}}{\|\bu\|_{0,\Omega}},\\
	e_1(p) :=  \frac{\bigl(\sum_{K \in \mathcal{T}_h} | p - \Pi^{\nabla}_K p_h|_{1,K}^2 \bigr)^{1/2}}{|p|_{1,\Omega}}, \quad
	e_0(p) := \frac{\bigl(\sum_{K \in \mathcal{T}_h} \| p - \Pi^{\nabla}_K p_h\|_{0,K}^2 \bigr)^{1/2}}{\|p\|_{0,\Omega}}, \\
	e_0(\psi) := \frac{\bigl(\sum_{K \in \mathcal{T}_h} \| \psi - \psi_h\|_{0,K}^2 \bigr)^{1/2}}{\|\psi\|_{0,\Omega}}.
	\end{gather*}

	Table \ref{tab:eg1_err} shows this convergence history, exhibiting optimal error decay.
	
	\begin{table}[t!]
	\setlength{\tabcolsep}{4pt}
	\begin{center}  
	\begin{tabular}{|cccccccccccc|}
				\hline 	
				\text{Ndof} & $h$ & $e_1(\bu)$ & $r$ & $e_0(\bu)$ & $r$ & $e_0(\psi)$ & $r$ & $e_1(p)$ & $r$ & $e_0(p)$ & $r$ \\ 
				\hline
				\hline
				179 & 0.25  & 0.477968 & - & 0.271687 & - &   0.508386 & - & 0.444463 & - & 0.142539 & -\\
				819 & 0.125 & 0.204990 & 1.22 & 0.055766 & 2.28 & 0.198845 & 1.35 & 0.195632 & 1.18 & 0.029745 & 2.26\\
				3419 & 0.0625 & 0.097838 & 1.07 & 0.013083 & 2.09 & 0.091837 & 1.11 &  0.097854 & 1.00 & 0.007526 & 1.98 \\
				13819 & 0.03125 & 0.049954 & 0.97 &  0.003322 & 1.98 & 0.043829 & 1.07 & 0.024456 & 1.02 & 0.001842  & 2.03 \\
				56067 & 0.015625 & 0.024756 & 1.01 & $8.2\cdot10^{-4}$ & 2.02 & 0.021704 & 1.01 & 0.024456 & 0.98 & $4.7\cdot10^{-4}$ & 1.96 \\
				\hline
		\end{tabular}
		\end{center}
		\caption{Verification of space convergence for the method with $k=1$. Errors and convergence rates $r$ for solid displacement, total pressure and fluid pressure.}\label{tab:eg1_err}
	\end{table}

	\subsection{Convergence with respect to the time advancing scheme}
	
	Regarding the convergence of the time discretisation, we fix a relatively
	fine hexagonal mesh and construct successively refined partitions of the time
	interval $(0,1]$. As in Ref. \cite{vermaI}, and in order to avoid mixing errors coming from the spatial discretisation, we
	modify the exact solutions to be
	\begin{align*}
	&  \bu(x,y,t)= 100\sin(t)\begin{pmatrix}\frac{x}{\lambda}+{y}, \\
	x+\frac{y}{\lambda}\end{pmatrix}, \\ &  p(x,y,t) = \sin(t)(x+y), \quad \psi(x,y,t) = \alpha p - \lambda \vdiv\bu, \end{align*}
	and we use them to compute loads, sources, initial data, boundary values, and boundary fluxes. The model parameters assume the values
	\begin{equation}\label{eq:param3}
	\kappa=0.1, \quad \alpha=1, \quad c_0=0, \quad \eta=1,
	\quad \lambda=1\times10^3 \quad \mu=1.\end{equation}
	The boundary definition is $\Gamma = [\{0\}\times (0,1)]\cup [(0,1) \times \{0\}]$ (bottom and left edges) and $\Sigma = \partial\Omega\setminus\Gamma$.
	
	We recall that cumulative errors up to $t_{\text{final}}$ associated with solid displacement, fluid pressure, and a generic pressure $v$ (representing either fluid or total pressure), are defined as
	\begin{align}\label{eq:errors-t} \begin{split}
	E_0(\bu) & =  \biggl(\Delta t \sum_{n=1}^N \biggl(
	\sum_{K \in \mathcal{T}_h} \| \bu(t_n) - \bPi^{\beps}_K \bu^n_h\|_{0,K}^2\biggr)
	\biggr)^{1/2},  \\
	{E_0}(v) & =  \biggl(\Delta t \sum_{n=1}^N
	\biggl(\sum_{K \in \mathcal{T}_h} \| v(t_n) - \Pi^{\nabla}_K v^n_h\|_{0,K}^2
	\biggr)\biggr)^{1/2},
	\end{split} \end{align}
	respectively.
	
	From Table~\ref{table:err_time} we can readily observe that these errors
	decay with a rate of $O(\Delta t)$.
	
	\begin{table}[!t]
	\begin{center}
	\begin{tabular}{|ccccccc|}
				\hline
				$\Delta t$ & ${E_0}(\bu)$ & $r$ & ${E_0}(p)$ & $r$ &  ${E_0}(\psi)$ & $r$  \\
				\hline
				\hline
				0.5 & 0.002897  & -- & 0.462768 & -- & 0.398059 & -- \\
				0.25 & 0.001362 & 1.09 & 0.218179 & 1.08 & 0.187834 & 1.08 \\
				0.125 & $6.5173\cdot10^{-4}$ & 1.06 & 0.104546 & 1.06& 0.090044 & 1.06 \\
				0.0625 & $3.1756\cdot10^{-4}$ & 1.04 & 0.050955 & 1.04 & 0.043910 & 1.04 \\
				0.03125 & $1.5664\cdot10^{-4}$ & 1.02 & 0.025123 & 1.02 & 0.021683 & 1.02 \\
				0.015625 & $7.7950\cdot10^{-5}$ & 1.01 & 0.012469 & 1.01 & 0.010826 & 1.00 \\
				\hline
		\end{tabular}\end{center}
		\caption{Convergence of the time discretisation for solid displacement, fluid pressure, and total pressure, using successive partitions of the time interval and a fixed
			hexagonal mesh.}  \label{table:err_time}
	\end{table}
	
	\subsection{Verification of simultaneous space-time convergence for poroelasticity}
	Now we consider exact solid displacement and fluid pressure solving problem \eqref{eq:Biot} on the square domain $\Omega =(0,1)^2$ and on the time interval $(0,1]$, given as
	\begin{align*}
	&  \bu(x,y,t) = \begin{pmatrix} 	-\exp(-t) \sin(2\pi y) (1-\cos(2 \pi x)) + \frac{\exp(-t)}{\mu+\lambda}\sin(\pi x)\sin(\pi y) \\
	\exp(-t)\sin(2 \pi x)(1-\cos(2 \pi y)) + \frac{\exp(-t)}{\mu+\lambda}\sin(\pi x)\sin(\pi y)
	\end{pmatrix} , \\
	& p(x,y,t) = \exp(-t) \sin(\pi x) \sin(\pi y), \quad \psi(x,y,t) = \alpha p - \lambda \vdiv\bu,
	\end{align*}
	which satisfies $\vdiv\bu \to 0$ as $\lambda \to \infty$ (see similar tests in Ref. \cite{fu19,yi17}).
	The load functions, boundary values, and initial data can be obtained from these closed-form solutions, and alternatively to the dilation modulus and permeability
	specified in \eqref{eq:param3}, we here choose larger values $\lambda = 1\times10^4$, and
	$\kappa=1$. 	

	In addition to the errors in \eqref{eq:errors-t}, for displacement and for fluid pressure  we will also compute
	\begin{align*}
	E_1(\bu) &=  \biggl(\Delta t \sum_{n=1}^N \biggl(
	\sum_{K \in \mathcal{T}_h} | \bu(t_n) - \bPi^{\beps}_K \bu^n_h|_{1,K}^2\biggr)
	\biggr)^{1/2}, \\
	E_1(p) & =  \biggl(\Delta t \sum_{n=1}^N
	\biggl(\sum_{K \in \mathcal{T}_h} | p(t_n) - \Pi^{\nabla}_K p^n_h|_{1,K}^2
	\biggr)\biggr)^{1/2}.
	\end{align*}
	We consider here pure Dirichlet boundary conditions for both displacement and fluid pressure.
	A backward Euler time discretisation
	is used, and
	in this case we are using successive refinements of the hexagonal partition  of the domain as shown in Figure \ref{fig:meshes}(c), simultaneously with a successive refinement of the time step. The cumulative errors are again computed until the
	final time  $t=1$, and the results are collected in Table \ref{table:err_eg4}. They show once more  optimal convergence rates for the scheme in its lowest-order form.
	
	Note from this and the previous test, that
	a zero constrained specific storage coefficient
	does not hinder the convergence properties.
	
	\begin{table}[!t]
	\setlength{\tabcolsep}{4pt}
	\begin{center}
	\begin{tabular}{|cccccccccccc|}
							\hline
				$h$ & $\Delta t$ & $E_1(\bu)$ & $r$ & $E_0(\bu)$ & $r$ & $E_1(p)$ & $r$ & $E_0(p)$ & $r$ &  $E_0(\psi)$ & $r$  \\
				\hline
				\hline
				1/8 & 1/10 & 1.741116 & - & 0.101035 & - & 0.239518 & - & 0.009757 & - & 0.509493 & -  \\
				1/16 & 1/20 & 0.892377 & 0.96 & 0.026166 & 1.95 & 0.123684 & 0.95 & 0.002528 & 1.95 & 0.251106 & 1.02  \\		
				1/32 & 1/40 & 0.451402 & 0.98 & 0.006594 & 1.99 & 0.062743 & 0.98 & 0.000642 & 1.98 & 0.125025 & 1.01 \\
				1/64 & 1/80 & 0.227050 & 0.99 & 0.001650 & 2.00 & 0.031584 & 0.99 & 0.000161 & 1.99 & 0.062399 & 1.00 \\
				1/128 & 1/160 &	0.113876 & 1.00 & 0.000413 & 2.00 & 0.015844 & 1.00 & 0.000041 & 2.00 & 0.031165 & 1.00  \\ 		
				\hline
		\end{tabular}\end{center}
		\caption{Convergence of the numerical method for displacement, fluid pressure, and total pressure, up to the final time $t=1$, using simultaneous partitions of the time interval and of the
			spatial domain (using
			hexagonal meshes).}\label{table:err_eg4}
	\end{table}

	\subsection{Gradual compression of a poroelastic block}
	Finally we carry out a test involving the compression of a block occupying the region
	$\Omega = (0,1)^2$ by applying a sinusoidal-in-time traction
	on a small
	region on the top of the box (see a similar test in Ref. \cite{oyarzua16}). The model parameters in this case are
	\begin{gather*}
	\nu=0.49995,\quad E_c=3\times10^{4},\quad \kappa=1\times10^{-4},\quad \alpha=1,\quad c_0=1\times10^{-3}, \\
	\eta=1, \quad
	\lambda=\frac{E_c \nu }{(1+\nu)(1-2 \nu)}, \quad \mu=\frac{E_c}{(2+2 \nu)}.\end{gather*}
	
	For this test we have employed a mesh conformed by distorted quadrilaterals exemplified in
	Figure~\ref{fig:meshes}(b).
	The boundary conditions are of homogeneous Dirichlet type for fluid
	pressure on the whole boundary, and of mixed type for displacement, and the
	boundary is split as $\partial \Omega:= \Gamma_1 \cup \Gamma_2 \cup \Gamma_3$.
	A traction $\boldsymbol{h}(t) =(0,- 1.5\times10^{4}\sin(\pi t))^T$ is applied on a segment of the top edge of the boundary $\Gamma_1=(0.25,0.75) \times \{ 1 \}$, on the
	remainder of the top edge $\Gamma_2= [0,1] \times \{ 1 \}  \backslash \Gamma_1$, we
	impose zero traction, and
	the body is clamped on the
	remainder of the boundary
	$\Gamma_3= \partial \Omega \backslash (\Gamma_1 \cup \Gamma_2)$. No boundary conditions
	are prescribed for the total pressure. Initially the
	system is at rest $\bu(0)=\cero$, $\psi(0) = 0$, $p(0) = 0$, and we employ a backward Euler
	discretisation of the time interval $(0,0.5]$ with a constant timestep $\Delta t = 0.1$.
	The numerical results obtained at the final time are depicted in Figure \ref{fig:footing}, where
	the profiles for fluid and total pressure present no spurious oscillations.
	
	\begin{figure}[t]
		\subfigure[]{\includegraphics[width=0.32\textwidth]{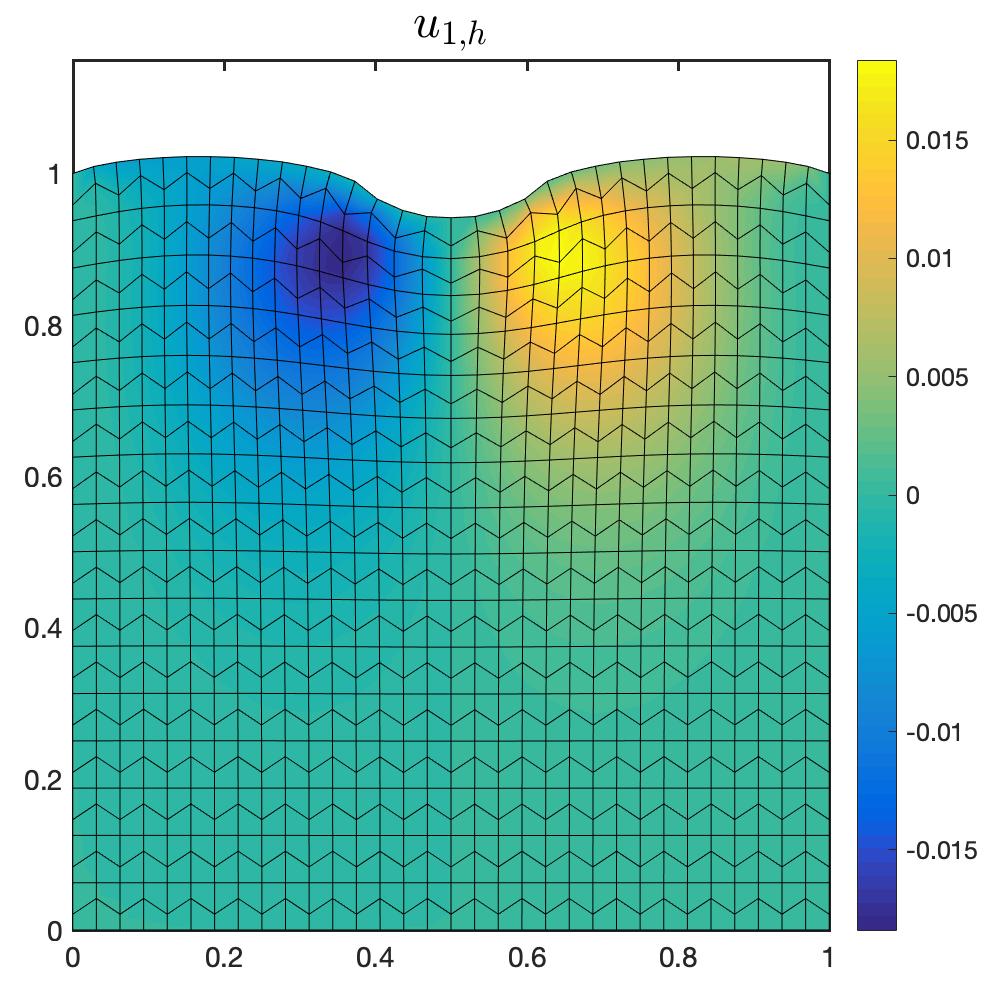}}
		\subfigure[]{\includegraphics[width=0.32\textwidth]{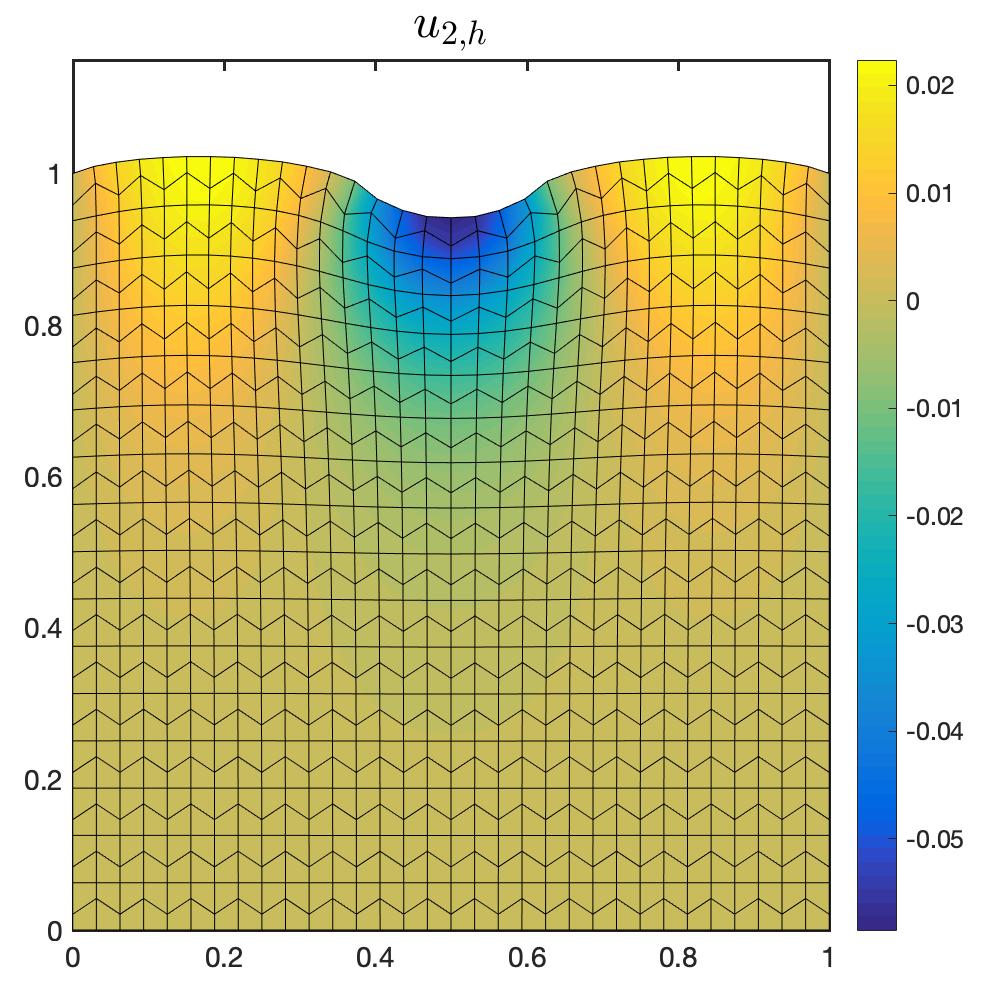}}\quad \raisebox{1mm}{\subfigure[]{\includegraphics[width=0.276\textwidth]{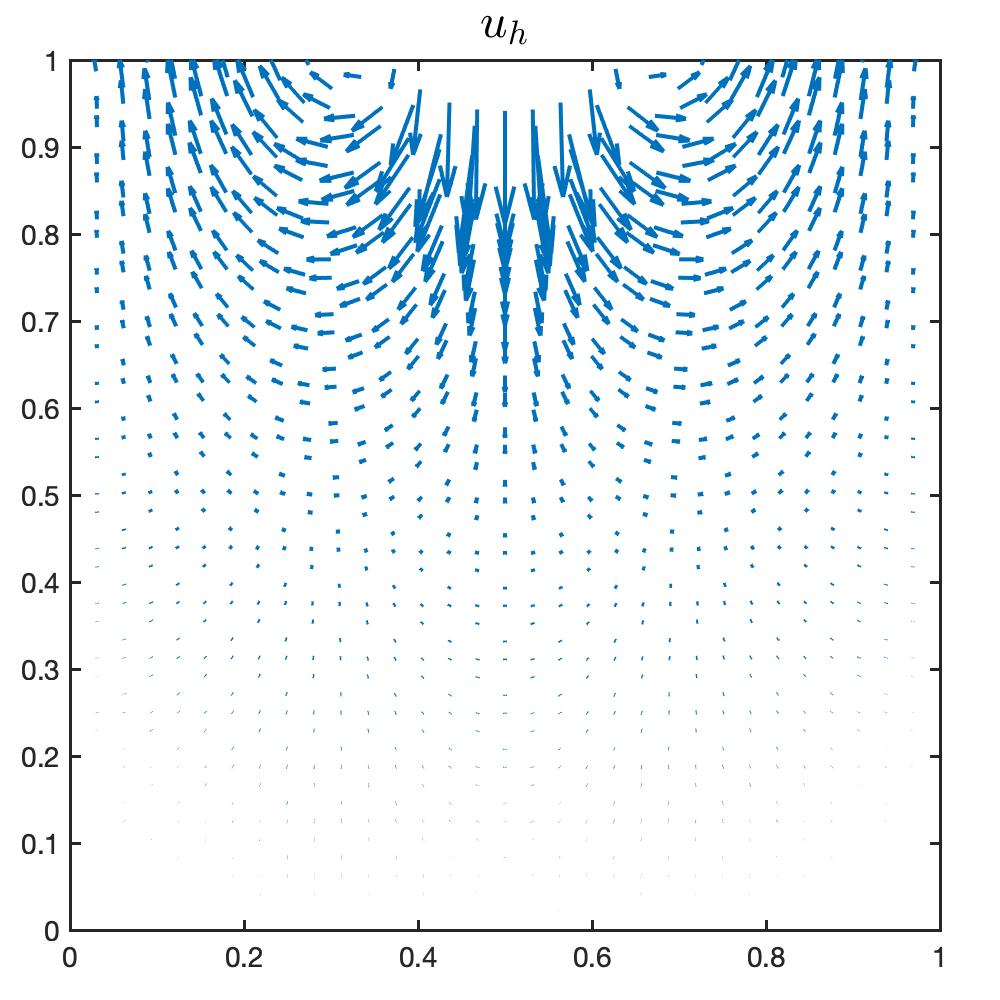}}}\\
		\subfigure[]{\includegraphics[width=0.32\textwidth]{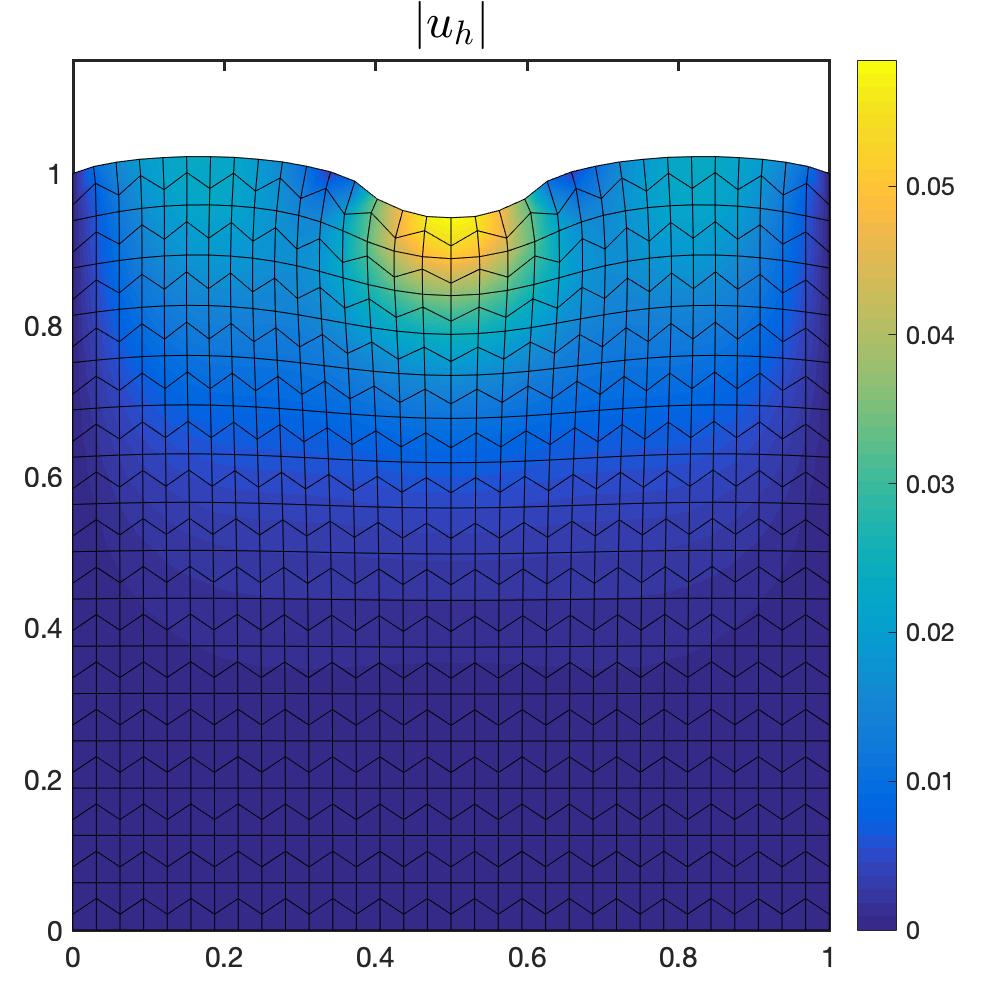}}
		\subfigure[]{\includegraphics[width=0.32\textwidth]{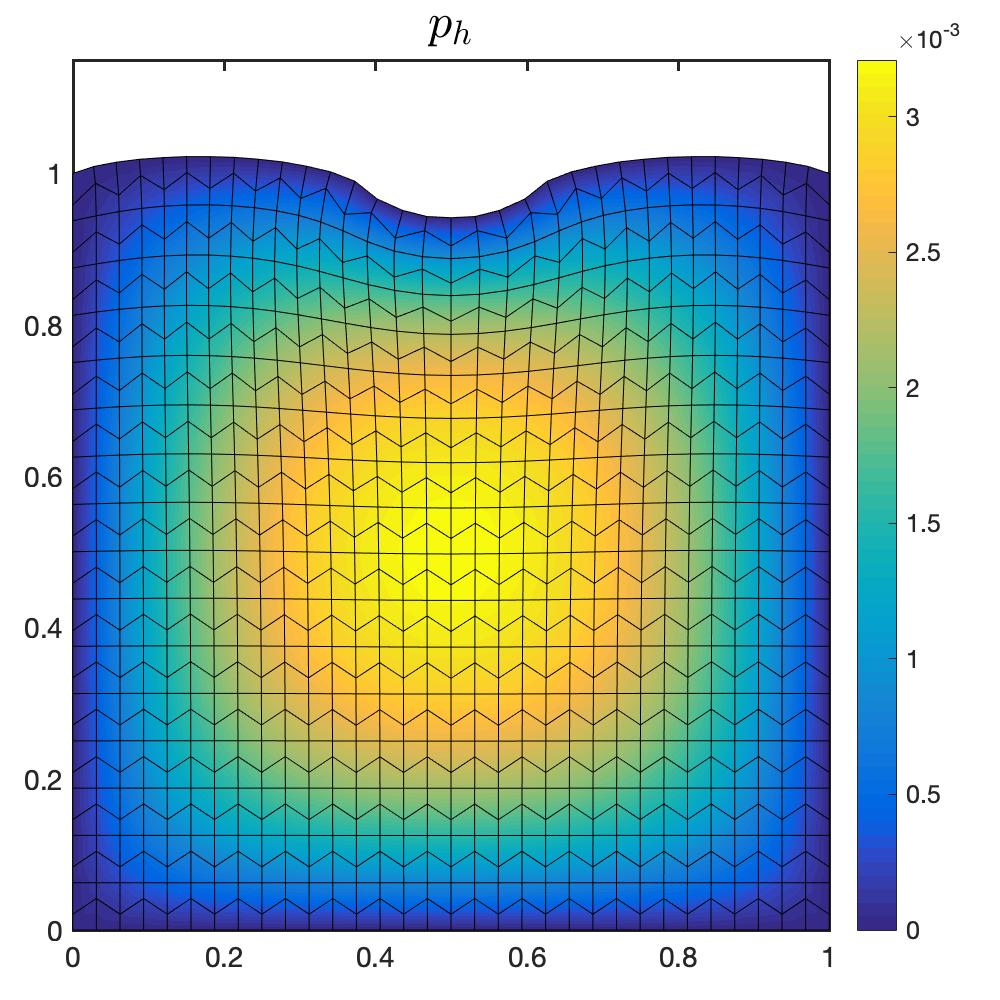} }
		\subfigure[]{\includegraphics[width=0.32\textwidth]{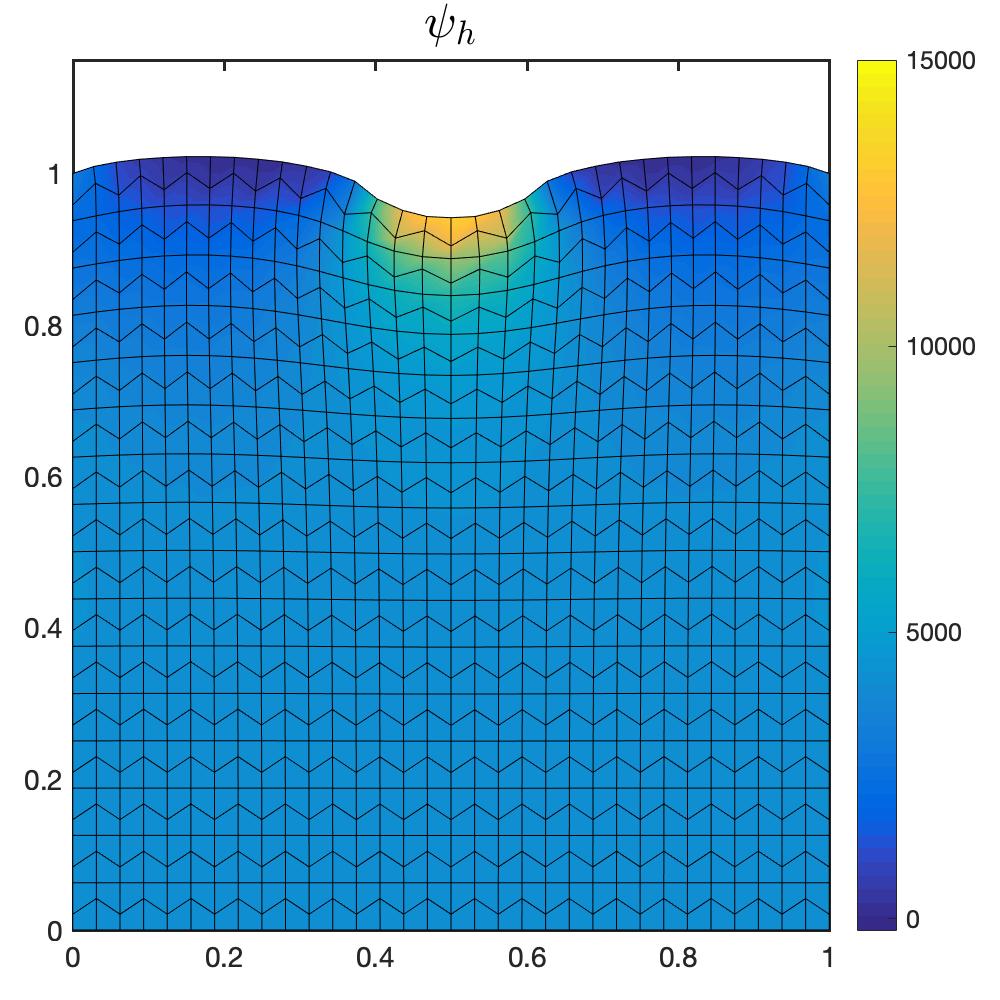}}
		\caption{Compression of a poroelastic block after $t=0.5$ adimensional units.
			Approximate displacement
			components (a,b), displacement vectors on the undeformed domain (c),
			displacement magnitude (d), fluid pressure (e), and
			total pressure (f), depicted on the deformed domain.}
		\label{fig:footing}
	\end{figure}

	\section{Summary and concluding remarks} \label{sec:concl}
	We have constructed and analysed a new virtual element method for the
	Biot equations of linear poroelasticity. The finite-dimensional formulation is based on Bernardi-Raugel type elements, which can be regarded as low-order and stable virtual elements, hence being  computationally competitive compared to other existing stable pairs for incompressible flow problems. Both the formulation and its analysis seem to be novel, and they constitute the first
	fully VEM discretisation for poroelasticity problems.
	
	Optimal and Lam\'e-robust error estimates were established for solid displacement, fluid pressure, and total pressure, in natural norms without weighting. This was achieved with the help of appropriate poroelastic projection operators. Numerical experiments have been performed using different polygonal meshes, and they put into evidence  not only computational verification of the convergence of the scheme (where rates of error decay in space and in time are in excellent agreement with the theoretically derived error bounds), but also its performance in simple poromechanical tests.
	
	Natural extensions of this work include the development and analysis of higher-order versions of the virtual discretisations advanced here, the efficient implementation and application to 3D problems, and the coupling with other phenomena such as diffusion of solutes in poroelastic structures\cite{vermaI}, interface elasticity-poroelasticity problems\cite{adgmr20}, multilayer poromechanics\cite{naumovich06}, or multiple-network consolidation models\cite{lee19,hong19}.
	

	\small 	
	\section*{Acknowledgements} 	
	RB is supported by CONICYT (Chile) through projects Fondecyt~1170473;
	CONICYT/PIA/AFB170001; and CRHIAM, project CONICYT/FONDAP/15130015.
	DM is supported by CONICYT-Chile through FONDECYT project
	1180913 and by project AFB170001 of the PIA Program: Concurso Apoyo
	a Centros Cient\'ificos y Tecnol\'ogicos de Excelencia con Financiamiento Basal.
	RRB is supported by the Engineering and Physical Sciences Research Council (EPSRC) through
	the grant EP/R00207X/1, and by the London Mathematical Society - Scheme 5, grant 51703.
	%

\end{document}